\documentclass[11pt]{article}

\usepackage{authblk}

\usepackage{fourier}
\DeclareMathAlphabet{\altmathcal}{OMS}{cmsy}{m}{n}

\usepackage{thmtools} 
\usepackage{thm-restate}

\usepackage[utf8]{inputenc}
\usepackage{amsmath}
\usepackage{mathtools}
\usepackage{amsthm}
\usepackage{amsfonts}
\usepackage{amssymb}
\usepackage[margin=1in]{geometry}
\usepackage[all]{xy}
\usepackage[english]{babel}
\usepackage{graphicx}
\usepackage{arydshln}
\usepackage{tikz}
\usepackage{tikz-cd}
\usepackage{pgfplots}
\usepackage{hyperref}
\usepackage{fancyhdr}
\usepackage{dsfont}
\usepackage{mathrsfs}
\usepackage{float}
\usepackage{multicol}
\usepackage{fix-cm}
\usepackage[shortlabels]{enumitem}
\usetikzlibrary{matrix,arrows,decorations.pathmorphing,shapes.misc}
\usepackage{capt-of}
\usepackage{appendix}
\usepackage{tocloft}
\usepackage[normalem]{ulem}

\cftsetindents{section}{2em}{2.4em}
\cftsetindents{subsection}{4.4em}{3.2em}

\newtheorem{remark}{Remark}[section]
\newtheorem{remarks}{Remarks}[section]
\newtheorem{lemma}{Lemma}[section]
\newtheorem{theorem}{Theorem}[section]


\newtheorem{proposition}{Proposition}[section]

\newtheorem{definition}{Definition}[section]
\newtheorem{fact}{Fact}[section]

\newtheorem{example}{Example}[section]
\newtheorem{corollary}{Corollary}[section]
\newtheorem{caution}{Caution}[section]

\newcommand{\pv}{\mathbb{V}}
\newcommand{\pw}{\mathbb{W}}
\newcommand{\pu}{\mathbb{U}}
\newcommand{\dgm}{\mathrm{Dgm}}

\newcommand{\im}{\textup{im} }
\newcommand{\coim}{\textup{coim} }

\newcommand{\R}{\mathbb{R}}
\newcommand{\N}{\mathbb{N}}

\newcommand{\nmod}{n\textup{-Mod}}
\newcommand{\taumod}[1]{\textup{Mod}_{#1}}
\newcommand{\taumodcat}[1]{\textbf{$\taumod{\text{\boldmath$#1$}}$}}

\newcommand{\type}{\textup{type}}
\newcommand{\lms}{\{\!\!\{}
\newcommand{\rms}{\}\!\!\}}

\newcommand{\mathdszero}{\scalebox{.7}[1]{$\mathbb{O}$}}
\newcommand{\cupdot}{\charfusion[\mathbin]{\cup}{\cdot}}
\newcommand{\bigcupdot}{\charfusion[\mathop]{\bigcup}{\cdot}}
\newcommand{\colim}{\textup{colim}}

\makeatletter
\def\moverlay{\mathpalette\mov@rlay}
\def\mov@rlay#1#2{\leavevmode\vtop{%
		\baselineskip\z@skip \lineskiplimit-\maxdimen
		\ialign{\hfil$\m@th#1##$\hfil\cr#2\crcr}}}
\newcommand{\charfusion}[3][\mathord]{
	#1{\ifx#1\mathop\vphantom{#2}\fi
		\mathpalette\mov@rlay{#2\cr#3}
	}
	\ifx#1\mathop\expandafter\displaylimits\fi}
\makeatother

\newcommand{\Expect}{{\rm I\kern-.3em E}}

\definecolor{darkblue}{rgb}{0.0, 0.0, 0.8}
\definecolor{darkred}{rgb}{0.8, 0.0, 0.0}
\definecolor{darkgreen}{rgb}{0.0, 0.8, 0.0}

\newcommand{\facundo}[1]                {{ \textcolor{darkred} {#1}}}


\begin{document}
	
	\title{Stability of zigzag persistence with respect to a reflection type distance \facundo{(use functor)}}	
	
	\title{The reflection distance between zigzag persistence modules}	
	\author[1]{Alexander Elchesen}
	\author[2]{Facundo M\'emoli}
	\affil[1]{Department of Mathematics,
		The University of Florida\\
		\texttt{a.elchesen@ufl.edu}}
	\affil[2]{Department of Mathematics and Department of Computer Science and Engineering,
		The Ohio State University\\ 
		\texttt{memoli@math.osu.edu}}
	
	\date{\today}
	\maketitle

	\begin{abstract}
		
		By invoking the reflection functors introduced by Bernstein, Gelfand, and Ponomarev in 1973, in this paper we define a metric on the space of all zigzag modules of a given length, which we call the reflection distance. We show that the reflection distance between two given zigzag modules of the same length is an upper bound for the $\ell^1$-bottleneck distance between their respective persistence diagrams. 
		
	\end{abstract}

	\newpage
	\tableofcontents
	\newpage
	
	\section{Introduction and Main Results}
	Persistent Homology is a circle of ideas \cite{frosini1990distance,frosini1992measuring,delfinado1995incremental,robins1999towards,edelsbrunner2000topological,zomorodian2005computing} related to studying the homology of diagrams of simplicial complexes or topological spaces. Often these diagrams are parametrized by a scale parameter which has some geometric meaning. One fundamental example is that given by an increasing sequence of subspaces of a given topological space $X$: $\emptyset=X_0\subset X_1\subset \cdots\subset X_n=X$. In this case, upon passing to homology (with coefficients in a field), one obtains a similar diagram of vector spaces and linear maps $V_0\rightarrow V_1\rightarrow \cdots \rightarrow V_n$. These diagrams are referred to as \emph{persistence modules} and, under mild tameness assumptions, their structure up to isomorphism can be summarized by a multiset of pairs $(i,j)$ with $i\leq j$. The intuition is that these multisets subsume the lifetime of homological features as these are born and are eventually annihilated.

	The poset underlying the diagram of topological spaces $\emptyset=X_0\subset X_1\subset \cdots \subset X_n=X$ from above is simply the poset on $n$ points (generated by) $\bullet \rightarrow \bullet \rightarrow \cdots \rightarrow \bullet$. This setting was generalized by Carlsson and de Silva \cite{zigzag} to allow for any diagram of topological spaces (or simplicial complexes) whose underlying poset is of the form $\bullet \leftrightarrow \bullet \leftrightarrow \cdots \leftrightarrow \bullet$, where at each occurrence of $\leftrightarrow$ exactly one choice for the direction of the arrow is made; the finite sequence of all such choices is called the \emph{type} of $\pv$.
	This generalized setting is called \emph{zigzag persistence}. It provides a complete algebraic invariant for sequences $\pv = (V_i,p_i)$ of vector spaces and linear maps of the form
	\begin{equation*}
	\begin{tikzcd}[row sep=2em,column sep=3.5em]
	V_1\arrow[r,"p_1"]& V_2 \arrow[l]\arrow[r,"p_2"]&\cdots\arrow[l]\arrow[r,"p_{n-2}"]& V_{n-1}\arrow[l]\arrow[r,"p_{n-1}"]& V_n\arrow[l].
	\end{tikzcd}
	\end{equation*} As it was noted in \cite{zigzag} these complete algebraic invariants that zigzag persistence associates to $\pv$ also take the form of persistence diagrams, but these can now be enriched with the type of the zigzag persistence module from which they arise.

	Zigzag persistence has found applications in neuroscience \cite{babichev2017robust,chowdhury2017importance}, and in the analysis of dynamic data \cite{corcoran2016spatio,kim2017stable,chowdhury2017importance}. See \cite{tausz2011applications} for a general description of many possible applications of zigzag persistence.

	In practical applications one typically wishes to use the persistence diagram of a zigzag module to gain insights about the underlying data from which the zigzag module was extracted. With applications in mind, it is important to be able to guarantee stability of zigzag persistence. Informally, a process which takes data as input and provides some invariant as output is \emph{stable} if whenever the input data is perturbed slightly, the resulting invariant changes only slightly. Since data is usually acquired with some inherent noise, stability is a very desirable property.
	
	Depending on the application, a metric $\rho$ is defined on the space of input data and stability results take the form $d_b\leq C \rho$ for some constant $C>0$. A standard metric for measuring the closeness of two persistence diagrams is the bottleneck distance $d_b$ \cite{edelsbrunner2010computational}. In the context of standard persistence, it has been proven that persistence diagrams are stable in different degrees of generality \cite{classical_stability,cohen2006vines,chazal_cohen-steiner_glisse_guibas_oudot_2009,bauer2014induced,lesnick2015theory,cohen2010lipschitz,bubenik2014categorification,bubenik2015metrics}.
	
	An alternative approach is to define a metric \textit{at the algebraic level}, measuring the distance between persistence modules directly. Stability of this form is referred to as \textit{algebraic stability} and it is a notion of algebraic stability that we study in this paper. The algebraic stability of standard persistent homology was studied in \cite{chazal_cohen-steiner_glisse_guibas_oudot_2009} (see also \cite{chazal2016structure,lesnick2015theory,bauer2014induced}), whereas the algebraic stability of zigzag persistence was approached by Botnan and Lesnick through a method different from ours in \cite{algstabzz}.

	In essence, the distance between zigzag modules constructed by Botnan and Lesnick first suitably extrapolates two given zigzag modules into persistence modules over $\mathbb{R}^2$ and then computes an interleaving type distance between these extrapolated modules. They were able to prove that the bottleneck distance between the persistence diagrams of the original persistence modules is bounded above by a constant times the value of the distance between them. A recent refinement by Bjerkevik \cite{bjerkevik} has found the optimal constant for this inequality.
	
	We now describe the structure of our distance and state our main stability result.

	\subsection{Statement of the Main Result}
	
	We introduce here a family of pseudometrics $d_{\altmathcal{R}}^p$, parametrized by $p\in[1,\infty)$, on the space of zigzag modules of length $n$ and then show that in the special case $p = 1$, the inequality $d_b^1\leq d_{\altmathcal{R}}^1$ holds, where $d_b^p$ denotes the \textit{$\ell^p$-bottleneck distance}. The $\ell^p$-bottleneck distance arises from considering a definition analogous to the standard bottleneck distance \cite{edelsbrunner2010computational} with the provision that the ground metric between points is chosen to be the $\ell^p$ norm in $\R^2$ (see details in Section \ref{sec:d-bottleneck}).
	
	For a given $p\in[1,\infty)$, $d_{\altmathcal{R}}^p$ is called the \emph{$p$-reflection distance.} The idea behind the definition of the reflection distance is the following: we consider some collection of transformations of zigzag modules which we will model as a collection $\altmathcal{S}$ of endofunctors on the category $\nmod$ of zigzag modules of a fixed length $n$. For each $p\in[1,\infty)$, we associate a cost to each functor $\altmathcal{F}\in\altmathcal{S}$ by means of a cost function 
	\[ C_p:\altmathcal{S}\to \R^+.\]
	We then define a function $d^p_\altmathcal{R}:\nmod\times\nmod\to \R$ by setting 
	\[ d^p_\altmathcal{R}(\pv,\pw):= \min_{(\altmathcal{F}_1,\altmathcal{F}_2)}\big\{\max\{C_p(\altmathcal{F}_1),C_p(\altmathcal{F}_2)\} \ | \ \altmathcal{F}_1(\pv)\precsim \pw \text{ and } \altmathcal{F}_2(\pw)\precsim \pv\big\},\]
	where the minimum is taken over pairs $(\altmathcal{F}_1,\altmathcal{F}_2)\in \altmathcal{S}\times \altmathcal{S}$ of functors satisfying the conditions that \mbox{$\altmathcal{F}_1(\pv)\precsim \pw$} and $\altmathcal{F}_2(\pw)\precsim \pv$. Here, $\pv_1\precsim\pv_2$ if and only if $\pv_1$ is equivalent to a summand of $\pv_2$, where ``equivalent" refers to equivalence of zigzag modules which differ only in the direction of linear maps representing isomorphisms. If the cost function $C_p$ satisfies the subadditivity condition
	\[ C_p(\altmathcal{F}_2\circ \altmathcal{F}_2)\leq C_p(\altmathcal{F}_1) + C_p(\altmathcal{F}_2)\]
	for all functors $\altmathcal{F}_1, \altmathcal{F}_2\in \altmathcal{S}$ then $d^p_\altmathcal{R}$ turns out to be a pseudometric on $n\textup{-Mod}$.
	
	Of course, $d^p_\altmathcal{R}$ depends both on the collection of functors which we restrict ourselves to and the cost function $C_p$ used. The functors which we will restrict ourselves to in this paper are defined by replacing certain subdiagrams of a given zigzag module by a diagram formed from its limit or colimit. Such functors are closely related to the reflection functors of Bernstein, Gelfand, and Ponomarev \cite{bgp}, hence the name \textit{the reflection distance}. The cost function chosen simply counts the number of transformations needed to transform a pair of zigzag modules into each other, weighted by the parameter $p$. Our main result is then the following

	\begin{restatable}[Main Theorem]{theorem}{thmmain}
		\label{thm:main}For all zigzag modules $\pv,\pw \in n\textup{-Mod}$ we have
		\[d_{b}^1(\dgm(\pv),\dgm(\pw)) \leq d_{\altmathcal{R}}^1(\pv,\pw).\]
	\end{restatable}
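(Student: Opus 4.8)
The plan is to reduce the global inequality to a local one about a single reflection functor, and then to combine these local estimates along the witnessing pair $(\altmathcal{F}_1, \altmathcal{F}_2)$ that realizes $d^1_{\altmathcal{R}}(\pv, \pw)$. First I would record the key structural fact that makes the reflection functors tractable: each $\altmathcal{F} \in \altmathcal{S}$ is, up to decomposition into indecomposables, a bijection on the set of interval summands that \emph{moves each endpoint of each interval by at most one step}, possibly deleting or creating intervals of "width zero" (the ones supported at a single vertex) at the vertex where the reflection takes place. In persistence-diagram language, applying $\altmathcal{F}$ moves each point of $\dgm(\pv)$ by at most $1$ in the $\ell^1$ norm on $\R^2$ (and may add or remove points on the diagonal, which the bottleneck distance ignores). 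I would state and prove this as the central lemma, working interval-by-interval using the explicit description of what a BGP-type reflection does to the indecomposable representations of a zigzag quiver — this bookkeeping, carried out separately for intervals that contain the reflected vertex in their interior, at their boundary, or not at all, is where the real work lies and is the step I expect to be the main obstacle.

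Granting that lemma, the argument is as follows. Fix an optimal pair $(\altmathcal{F}_1, \altmathcal{F}_2)$ so that $\altmathcal{F}_1(\pv) \precsim \pw$ and $\altmathcal{F}_2(\pw) \precsim \pv$, and write $C_1(\altmathcal{F}_1) = k_1$, $C_1(\altmathcal{F}_2) = k_2$, so that $d^1_{\altmathcal{R}}(\pv, \pw) = \max\{k_1, k_2\}$. Here $C_1$ counts the number of elementary reflections composing the functor, so $\altmathcal{F}_1$ is a composite of $k_1$ elementary reflections; iterating the central lemma, $\dgm(\altmathcal{F}_1(\pv))$ is obtained from $\dgm(\pv)$ by moving each point by at most $k_1$ in $\ell^1$ and inserting/deleting diagonal points. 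Since $\altmathcal{F}_1(\pv) \precsim \pw$, the multiset $\dgm(\altmathcal{F}_1(\pv))$ is a sub-multiset of $\dgm(\pw)$ (modulo the equivalence $\precsim$, which does not change the diagram). Symmetrically, $\dgm(\altmathcal{F}_2(\pw))$ is obtained from $\dgm(\pw)$ by $\ell^1$-moves of size at most $k_2$ plus diagonal noise, and is a sub-multiset of $\dgm(\pv)$.

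To finish, I would construct an explicit $\ell^1$-bottleneck matching between $\dgm(\pv)$ and $\dgm(\pw)$ of cost at most $\max\{k_1, k_2\}$ out of these two one-sided containments. Concretely: the containment $\dgm(\altmathcal{F}_1(\pv)) \subseteq \dgm(\pw)$ together with the displacement bound gives a partial injection from (the off-diagonal part of) $\dgm(\pv)$ into $\dgm(\pw)$ whose every edge has $\ell^1$-length $\le k_1$; the points of $\dgm(\pv)$ not in the domain of this injection are exactly those that $\altmathcal{F}_1$ pushed onto the diagonal, hence lie within $\ell^1$-distance $k_1$ of the diagonal and may be matched to it. Doing the same with $\altmathcal{F}_2$ in the other direction and taking the union, one obtains a relation between $\dgm(\pv)$ and $\dgm(\pw)$ in which every point is either matched across an edge of length $\le \max\{k_1,k_2\}$ or matched to the diagonal at distance $\le \max\{k_1,k_2\}$; a standard argument (e.g. a Hall-type / König-type bipartite matching argument, or simply interleaving the two one-sided matchings) upgrades this relation to an honest bottleneck matching of the same cost. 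Therefore $d^1_b(\dgm(\pv), \dgm(\pw)) \le \max\{k_1, k_2\} = d^1_{\altmathcal{R}}(\pv, \pw)$, which is the claim. The only subtlety beyond the central lemma is checking that the two one-sided matchings can be merged without blowing up the cost; this is routine once the displacement bounds are in hand.
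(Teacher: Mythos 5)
Your proposal follows essentially the same route as the paper: your ``central lemma'' is the paper's Diamond Principle (the explicit per-interval action of reflection functors, together with additivity over direct sums), iterated to show that a sequence of $k$ reflections displaces each diagram point by at most $k$ in $\ell^1$ and annihilates only points within $\ell^1$-distance $k$ of the diagonal; and the merging of the two one-sided matchings is handled in the paper by invoking the classical Matching Lemma (a Cantor--Schr\"oder--Bernstein-type result), which is precisely the ``interleaving of two one-sided matchings'' you gesture at. The argument is the same, and the only step you leave implicit is exactly where the paper cites that lemma.
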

	\noindent Here, $d_b^1$ denotes the \textit{$\ell^1$-bottleneck distance} as described above.

	\subsection{Organization of the paper}
	Section \ref{sec:zigzag-mods} recalls the main facts about zigzag modules that we will need  in this paper; Section  \ref{sec:decompositions} recalls elements regarding the decomposition of zigzag modules as direct sums of interval modules; Section \ref{sec:type-transformations} sets terminology which will be used in later sections to describe transformations between different types of zigzag modules; Section \ref{sec:ref-functors} describes reflection functors and their effect on interval modules; Section \ref{sec:ref-distance} provides the precise description of our reflection distance. In Section \ref{sec:d-bottleneck} we describe the $\ell^p$-bottleneck distance and give the proof of our main theorem, Theorem \ref{thm:main}; Section \ref{sec:discussion} provides an overview of some lines of related research which may be of interest. Finally, Appendices \ref{app:cats} and \ref{app:matchings} contain background material on Category Theory and  Matchings.

	\section{Zigzag Modules} \label{sec:zigzag-mods}
	
	Fix a field $\mathbb{F}$. All vector spaces throughout will be finite dimensional over $\mathbb{F}$. A \textit{zigzag module} $\pv = (V_i,p_i)$ is a finite sequence 
	
	\begin{equation}\label{zzmod}
	\begin{tikzcd}[row sep=2em,column sep=3.5em]
	V_1\arrow[r,"p_1"]& V_2 \arrow[l]\arrow[r,"p_2"]&\cdots\arrow[l]\arrow[r,"p_{n-2}"]& V_{n-1}\arrow[l]\arrow[r,"p_{n-1}"]& V_n\arrow[l]
	\end{tikzcd}
	\end{equation}
	of vector spaces and linear transformations between them. An arrow $V_i \stackrel{p_i}{\longleftrightarrow} V_{i+1}$ represents either a forward linear map $V_i \stackrel{p_i}{\longrightarrow} V_{i+1}$ or a backward linear map $V_{i} \stackrel{p_i}{\longleftarrow} V_{i+1}$, but never both. The $p_i$ are referred to as \textit{structure maps}. Note that the $i^{\textup{th}}$ structure map $p_i$ either has domain $V_i$ or $V_{i+1}$ and codomain $V_{i+1}$ or $V_{i}$ respectively. We will use the notation $p_i:V_{i_1}\to V_{i_2}$ when the direction of $p_i$ has not been specified. In other words, $i_1,i_2\in \{i,i+1\}$ with $i_1\neq i_2$, $\textup{dom}(p_i) = V_{i_1}$, and $\textup{cod}(p_i) = V_{i_2}$.
	
	The \textit{length} of a zigzag module is the length of the sequence (\ref{zzmod}) above. We will denote the collection of all zigzag modules of length $n$ by $\nmod$. A finite sequence $\tau$ of the symbols $\rightarrow$ and $\leftarrow$, indicating the directions of the linear maps in (\ref{zzmod}) as read from left to right, is called the \textit{type} of the zigzag module. Formally, the type of a zigzag module of length $n$ is a sequence $\tau\in\{\rightarrow,\leftarrow\}^{n-1}$. We will use the notation $\altmathcal{T}_n := \{\rightarrow,\leftarrow\}^{n-1}$.
	\begin{remark} Sequences in $\altmathcal{T}_n$ have length $n-1$, not length $n$. This is so that zigzag modules in $\nmod$ have types in $\altmathcal{T}_n$.
	\end{remark}
	
	We define a map 
	\[\type = \type_n:\nmod\to \altmathcal{T}_n\]
	where $\textup{type}(\pv)$ is the type of $\pv$. We denote the collection of all zigzag modules of type $\tau$ by $\taumod{\tau}$, that is\footnote{Note that we are slightly abusing notation since $\taumod{\tau}$ may not be a set.}
	\[\taumod{\tau}:= \{\pv\in \nmod \ | \ \type(\pv) = \tau\}.\]
	Note that $\taumod{\tau} = \type^{-1}(\tau)$ and $\nmod = \bigcup_{\tau\in\altmathcal{T}_n}\taumod{\tau}$. A zigzag module of type $\tau$ is also called a \textit{$\tau$-module}.
	
	\begin{example} Let $n = 3$. Then
		\[ \altmathcal{T}_n = \{\underbrace{(\rightarrow,\rightarrow)}_{\tau_1},\underbrace{(\rightarrow,\leftarrow)}_{\tau_2},\underbrace{(\leftarrow,\rightarrow)}_{\tau_3},\underbrace{(\leftarrow,\leftarrow)}_{\tau_4}\}.\]
		Consider the zigzag modules
		\begin{equation*}
		\begin{aligned}[c]
		\pv_1 &= 	\begin{tikzcd}[column sep=2em]
		\mathbb{F}\arrow[r,"\textup{id}"]& \mathbb{F} \arrow[r,"\textup{id}"]& \mathbb{F}  
		\end{tikzcd}\\
		\pv_3 &= 
		\begin{tikzcd}[column sep=2em]
		\mathbb{F}&\arrow[l,swap,"0"]0\arrow[r,"0"] & \mathbb{F}  
		\end{tikzcd}\\
		\end{aligned}\quad
		\begin{aligned}[c]
		\pv_2 &= \begin{tikzcd}[column sep=2em]
		\mathbb{F}\arrow[r,"\textup{id}"]& \mathbb{F} & 0  \arrow[l,swap,"0"]
		\end{tikzcd}\\
		\pv_4 &= \begin{tikzcd}[column sep=2em]
		0\arrow[r,leftarrow,"0"]& \mathbb{F}& \mathbb{F}  \arrow[l,swap,"\textup{id}"]
		\end{tikzcd}\\
		\end{aligned}
		\end{equation*}
		where $\mathbb{F}$ is viewed as a $1$-dimensional vector space over itself and $0$ denotes the trivial vector space. Each of these zigzag modules is an element of $\nmod$ and we have $\type(\pv_j) = \tau_j$ for $j = 1, 2,3 ,4$.
	\end{example}
	
	\subsection{Morphisms Between Zigzag Modules}
	
	Fix zigzag modules $\pv = (V_i,p_i)$ and $\pw = (W_i,q_i)$ in $\taumod{\tau}$. A \textit{morphism} from $\pv$ to $\pw$ is a collection $\phi = \{\phi_i:V_i\to W_i\}_{i = 1}^n$ of linear transformations such that the diagram
	\[
	\begin{tikzcd}[row sep=3em,column sep=4em]
	V_1\arrow[r,"p_1"]\arrow[d,"\phi_1"]& V_2 \arrow[l]\arrow[r,"p_2"]\arrow[d,"\phi_2"]&\cdots\arrow[l]\arrow[r,"p_{n-2}"]& V_{n-1}\arrow[l]\arrow[r,"p_{n-1}"]\arrow[d,"\phi_{n-1}"]& V_n\arrow[l]\arrow[d,"\phi_{n}"]\\
	W_1\arrow[r,"q_1"]& W_2 \arrow[l]\arrow[r,"q_2"]&\cdots\arrow[l]\arrow[r,"q_{n-2}"]& W_{n-1}\arrow[l]\arrow[r,"q_{n-1}"]& V_n\arrow[l]
	\end{tikzcd}
	\]
	commutes. We denote a morphism from $\pv$ to $\pw$ by $\phi:\pv\to\pw$. The linear maps $\phi_i$ comprising the morphism $\phi$ are called the \textit{components} of $\phi$. Composition of morphisms is defined component-wise and the identity morphism $\textup{id}_\pv:\pv\to\pv$ is the morphism all of whose components are identity maps. With these definitions in place, for every $n\in\mathbb{N}$ and for each $\tau\in\altmathcal{T}_n$, the collection $\taumod{\tau}$ of $\tau$-modules together with the collection of all morphisms between them forms a category denoted $\taumodcat{\tau}$. We call a morphism $\phi$ an \textit{isomorphism}, \textit{monomorphism}, or \textit{epimorphism} if all of the $\phi_i$ are either bijective, injective, or surjective, respectively. If there exists an isomorphism between $\pv$ and $\pw$ we say that $\pv$ and $\pw$ are \textit{isomorphic} and write $\pv\cong \pw$.

	Given a $\tau$-module $\pv = (V_i,p_i)$, a \textit{submodule} $\pw = (W_i,q_i)$ of $\pv$ is a $\tau$-module such that for all $i$ either $p_i(W_i)\subseteq W_{i+1}$ or $p_i(W_{i+1})\subseteq W_i$, depending on whether $p_i:W_i\to W_{i+1}$ or $p_i:W_{i+1}\to W_{i}$, respectively. In this case, the linear map $q_{i}$ is taken to be the restrictions of the map $p_{i}$ to the subspace $W_i$ or $W_{i+1}$, again depending on the direction of $p_i$. A $\tau$-module $\pw$ is isomorphic to a submodule of $\pv$ if and only if there exists a monomorphism $f:\pw\to\pv$. This justifies working with monomorphisms between zigzag modules instead of working with submodules directly. We write $\pw\leq \pv$ whenever there exists a monomorphism $f:\pw\to \pv$. Note that $\pw\cong \pv$ if and only if $\pw\leq \pv$ and $\pv\leq \pw$.
	
	\subsection{Interval Modules and the Zero Module}
	Fix $n\in\mathbb{N}$ and $\tau\in\altmathcal{T}_n$. For each pair $b,d\in\{1,\hdots,n\}$ with $b\leq d$ we define a zigzag module $\mathbb{I}_{\tau}([b,d]) = (I_i,p_i)\in\taumod{\tau}$, called the \textit{interval $\tau$-module} on $[b,d]$, by setting 
	\[ I_i:= \begin{cases} \mathbb{F} & b\leq i\leq d\\ 0 & \textup{otherwise}\end{cases}\quad \text{ and }\quad p_i := \begin{cases} \textup{id}_{\mathbb{F}} & b\leq i <d\\ 0 & \textup{otherwise}.\end{cases}\]
	When the type is fixed we will drop the subscript $\tau$ and just write $\mathbb{I}([b,d])$. Interval $\tau$-modules of the form $\mathbb{I}_\tau([k,k])$ are called \textit{simple interval $\tau$-modules}. We also define the zero module $\mathdszero_\tau = (Z_i,z_i)$ of type $\tau$ to be the $\tau$-module with $Z_i = 0$ and $z_i = 0$ for all $i$.
	
	\begin{example} Let $n = 3$ and let $\tau = (\rightarrow,\leftarrow)\in\altmathcal{T}_n$. There are $6 = \binom{n+1}{2}$ nonzero interval $\tau$-modules given by 
		
		\begin{equation*}
		\begin{aligned}[c]
		\mathbb{I}_\tau([1,1]) &= 	\begin{tikzcd}[column sep=2em]
		\mathbb{F}\arrow[r,"0"]& 0 & 0  \arrow[l,swap,"0"]
		\end{tikzcd}\\
		\mathbb{I}_\tau([1,3]) &= 	\begin{tikzcd}[column sep=2em]
		\mathbb{F}\arrow[r,"\textup{id}"]& \mathbb{F} & \mathbb{F}  \arrow[l,swap,"\textup{id}"]
		\end{tikzcd}\\
		\mathbb{I}_\tau([2,3]) &= 
		\begin{tikzcd}[column sep=2em]
		0\arrow[r,"0"]& \mathbb{F} & \mathbb{F}  \arrow[l,swap,"\textup{id}"]
		\end{tikzcd}\\
		\end{aligned}\quad
		\begin{aligned}[c]
		\mathbb{I}_\tau([1,2]) &= \begin{tikzcd}[column sep=2em]
		\mathbb{F}\arrow[r,"\textup{id}"]& \mathbb{F} & 0  \arrow[l,swap,"0"]
		\end{tikzcd}\\
		\mathbb{I}_\tau([2,2]) &= \begin{tikzcd}[column sep=2em]
		0\arrow[r,"0"]& \mathbb{F} & 0  \arrow[l,swap,"0"]
		\end{tikzcd}\\
		\mathbb{I}_\tau([3,3]) &= \begin{tikzcd}[column sep=2em]
		0\arrow[r,"0"]& 0 & \mathbb{F}  \arrow[l,swap,"0"]
		\end{tikzcd}\\
		\end{aligned}
		\end{equation*}
	\end{example}

	\section{Decompositions of Zigzag Modules} \label{sec:decompositions}
	In this section we define the direct sum of zigzag modules of the same type and state a standard unique decomposition theorem, an adaptation of the Krull-Remak-Schmidt theorem to the context of zigzag persistence. Combined with Gabriel's Theorem which characterizes the indecomposable zigzag modules as precisely the interval zigzag modules, we are able to define \textit{persistence diagrams}, an object of fundamental importance in persistence theory.
	\subsection{Indecomposables and Summands}
	The \textit{direct sum} of two $\tau$-modules $\mathbb{X} = (X_i,\alpha_i)$ and $\mathbb{Y} = (Y_i,\beta_i)$ is a $\tau$-module $\mathbb{X}\oplus\mathbb{Y} = (Z_i,\gamma_i)$ where $Z_i = X_i\oplus Y_i$ and where $\gamma_i = \alpha_i\oplus \beta_i$ for all $i$. We say that $\pw$ is a \textit{summand} of $\pv$ whenever there exists a $\tau$-module $\pu$ such that $\pv \cong \pw\oplus\pu$ and we write $\pw\preceq \pv$. The relation $\preceq$ defines a partial order on the isomorphism classes of $\taumod{\tau}$. For later reference, we record the following easy proposition describing the structure maps of a summand:
	\begin{proposition} \label{types_of_summand_maps}If $(W_i,q_i) = \pw\preceq\pv = (V_i,p_i)$ and if the $k^\textup{th}$ structure map $p_k$ of $\,\pv$ is  injective, surjective, or bijective then the $k^\textup{th}$ structure map $q_k$ of $\,\pw$ is also injective, surjective, or bijective, respectively.
		%
		%
	\end{proposition}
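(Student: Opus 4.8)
The plan is to reduce the statement to the elementary fact that a block-diagonal linear map is injective (resp.\ surjective, bijective) if and only if each of its diagonal blocks is. First I would unwind the definition of $\pw\preceq\pv$: by hypothesis there is a $\tau$-module $\pu = (U_i,r_i)$ together with an isomorphism $\phi\colon \pw\oplus\pu \xrightarrow{\ \cong\ }\pv$. Writing $\pw\oplus\pu = (Z_i,\gamma_i)$ with $Z_i = W_i\oplus U_i$, the definition of the direct sum gives $\gamma_k = q_k\oplus r_k$, a linear map whose domain and codomain split compatibly with the direct sum decomposition. Because $\pw$, $\pu$, and $\pv$ all have type $\tau$, the maps $q_k$, $r_k$, $p_k$, and $\gamma_k$ all point in the same direction, so no case analysis on the orientation of the $k$-th arrow is needed beyond fixing the notation $\gamma_k\colon Z_{k_1}\to Z_{k_2}$ with $k_1,k_2\in\{k,k+1\}$, $k_1\neq k_2$, as in Section~\ref{sec:zigzag-mods}.

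Next I would use the isomorphism $\phi$ to transport the property from $\gamma_k$ to $p_k$. Commutativity of the morphism diagram for $\phi$ at the $k$-th arrow yields $p_k\circ\phi_{k_1} = \phi_{k_2}\circ\gamma_k$, i.e.\ $p_k = \phi_{k_2}\circ\gamma_k\circ\phi_{k_1}^{-1}$, where $\phi_{k_1}$ and $\phi_{k_2}$ are bijections. Since pre- and post-composition with bijections preserves injectivity, surjectivity, and bijectivity, $p_k$ enjoys whichever of these three properties $\gamma_k = q_k\oplus r_k$ enjoys, and conversely. Hence it suffices to analyze $q_k\oplus r_k$.

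It remains to observe that $q_k\oplus r_k$ injective (resp.\ surjective, bijective) forces $q_k$ injective (resp.\ surjective, bijective): if $q_k(w) = 0$ then $(q_k\oplus r_k)(w,0) = 0$, so injectivity of $q_k\oplus r_k$ gives $(w,0)=0$, whence $w=0$; if $w'\in W_{k_2}$ is arbitrary, any preimage $(w,u)$ of $(w',0)$ under $q_k\oplus r_k$ satisfies $q_k(w) = w'$; and the bijective case follows by combining the two. Chaining these implications with the previous paragraph completes the proof. There is essentially no obstacle here — the only point requiring a little care is the bookkeeping of the domains and codomains of the $k$-th structure maps under the direct sum and under $\phi$, which is precisely why the type-matching remark above is worth stating explicitly.
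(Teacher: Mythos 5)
Your proof is correct, and since the paper states Proposition~\ref{types_of_summand_maps} without supplying an argument (it is labeled only as an ``easy proposition''), there is nothing to compare against in detail; the route you take --- pass through the isomorphism $\phi\colon\pw\oplus\pu\to\pv$ to transfer the property of $p_k$ to $\gamma_k = q_k\oplus r_k$, then read the property off the $q_k$ block --- is exactly the standard argument one would supply. The one place worth pausing is the direction of the transport: you correctly record $p_k = \phi_{k_2}\circ\gamma_k\circ\phi_{k_1}^{-1}$ and then invoke the equivalence ``$p_k$ enjoys the property iff $\gamma_k$ does''; since the hypothesis concerns $p_k$ and the conclusion concerns $q_k$, it is the implication $p_k\Rightarrow\gamma_k\Rightarrow q_k$ that is actually used, and your phrasing (``and conversely'') covers it. The block-diagonal step --- $q_k\oplus r_k$ injective/surjective forces $q_k$ injective/surjective --- is verified correctly by testing on vectors of the form $(w,0)$ and $(w',0)$. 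No gaps.
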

	
	A $\tau$-module $\pv$ is said to be \textit{decomposable} if there exists nonzero $\tau$-modules $\pw$ and $\pu$ such that $\pv \cong \pw\oplus \pu$ and is said to be \textit{indecomposable} otherwise. The following important theorem says that every zigzag module decomposes as a sum of indecomposable modules and characterizes the indecomposables as the interval modules:
	\begin{theorem}[Krull-Remak-Schmidt, Gabriel \cite{zigzag},\cite{gabe}]\label{krull}
		For each $n\in\N$ and for every $\tau\in\altmathcal{T}_n$, the indecomposable $\tau$-modules are precisely the interval $\tau$-modules. Moreover, every $\pv\in\taumod{\tau}$ decomposes as a direct sum of interval $\tau$-modules. This decomposition is unique up to the order in which the summands appear.
	\end{theorem}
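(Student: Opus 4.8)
The plan is to prove the statement in two essentially independent parts: first the Krull-Remak-Schmidt content (existence and uniqueness of a decomposition into indecomposables), and then Gabriel's identification of the indecomposable $\tau$-modules with the interval $\tau$-modules.

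First I would observe that, for a fixed $n$ and $\tau\in\altmathcal{T}_n$, the category of $\tau$-modules is isomorphic to the category $\mathrm{Rep}_{\mathbb{F}}(Q_\tau)$ of finite-dimensional $\mathbb{F}$-representations of the quiver $Q_\tau$ whose underlying graph is the type-$A_n$ Dynkin diagram $\bullet - \bullet - \cdots - \bullet$ and whose arrows are oriented according to $\tau$: send vertex $i$ to $V_i$ and the $i$-th edge to the structure map $p_i$ with its prescribed direction. This is merely a repackaging of the definitions of Section \ref{sec:zigzag-mods}, so from here on I argue on the quiver side. Since each $V_i$ is finite-dimensional, $\mathrm{Rep}_{\mathbb{F}}(Q_\tau)$ is an abelian category all of whose objects have finite length. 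Such a category is a Krull-Schmidt category: idempotents split and, by Fitting's lemma, the endomorphism ring of an indecomposable object is local. The Krull-Remak-Schmidt theorem then guarantees that every $\pv\in\taumod{\tau}$ is a finite direct sum of indecomposables, with the list of summands uniquely determined up to isomorphism and reordering. (See Appendix \ref{app:cats} for the relevant categorical background.) This settles the ``decomposes'' and ``unique'' assertions, conditional on knowing what the indecomposables are.

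It then remains to identify the indecomposables. The easy inclusion is that each $\mathbb{I}_\tau([b,d])$ is indecomposable and that distinct intervals give nonisomorphic modules: any endomorphism of $\mathbb{I}_\tau([b,d])$ acts as a single scalar on every nonzero component $I_i=\mathbb{F}$ because the nonzero structure maps are identities, so $\mathrm{End}(\mathbb{I}_\tau([b,d]))\cong\mathbb{F}$ is local; and distinct pairs $[b,d]$ produce distinct dimension vectors. For the reverse inclusion -- that nothing else is indecomposable -- I would invoke Gabriel's theorem: the underlying graph of $Q_\tau$ is simply-laced Dynkin of type $A_n$, so indecomposable representations correspond bijectively, via the dimension vector, to the positive roots of the $A_n$ root system, and these positive roots are exactly the $\{0,1\}$-vectors supported on an interval $\{b,b+1,\dots,d\}$, realized precisely by the $\mathbb{I}_\tau([b,d])$. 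As an alternative to citing Gabriel, one can keep the argument within the scope of this paper: use the Bernstein-Gelfand-Ponomarev reflection functors of Section \ref{sec:ref-functors} to reduce, by a sequence of source/sink reversals, to the equioriented quiver $\bullet\to\bullet\to\cdots\to\bullet$ (these functors are equivalences on the relevant subcategories and carry indecomposables to indecomposables), and then decompose an equioriented module directly by induction on $n$ using bases adapted to the kernels and images of the successive structure maps.

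The main obstacle is precisely this reverse inclusion. The finite-length / Krull-Schmidt bookkeeping is routine, and the indecomposability of the interval modules is a one-line endomorphism computation; the genuine content lies in ruling out any indecomposable whose dimension vector is not a $\{0,1\}$-valued interval vector. Since this is classical, it may simply be cited from \cite{gabe,zigzag}; if a self-contained proof is wanted, the reflection-functor reduction to the equioriented case is the natural route, as it reuses the very functors developed later in the paper.
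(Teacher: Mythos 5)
The paper does not actually prove Theorem~\ref{krull}: it is stated with citations to \cite{zigzag} and \cite{gabe} and used as a black box thereafter, so there is no ``paper's own proof'' to compare against. Your sketch is a correct and standard argument, and it is essentially the one the cited references give. A few small comments. The Krull--Schmidt half is handled exactly as you say: finite total dimension implies finite length, idempotents split, Fitting's lemma gives local endomorphism rings of indecomposables, and the decomposition theorem follows. Your endomorphism computation showing $\mathrm{End}(\mathbb{I}_\tau([b,d]))\cong\mathbb{F}$ is the right way to see that the interval modules are indecomposable and pairwise nonisomorphic (via dimension vectors). For the converse, citing Gabriel's theorem for type $A_n$ is the shortest route, and your description of the positive roots as interval-supported $\{0,1\}$-vectors is correct. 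Your alternative via BGP reflection functors is also sound, and you correctly flag the one subtlety: the sink/source reflection functor at vertex $k$ is an equivalence only between the full subcategories with no summand isomorphic to the simple $\mathbb{I}([k,k])$ at that vertex. Since the simples being excluded are themselves interval modules, the induction/reduction to the equioriented quiver $\bullet\to\cdots\to\bullet$ goes through; in the equioriented case a direct change-of-basis argument (choosing bases adapted to kernels and images of the $p_i$) finishes the classification. Either route is consistent with the paper's use of the theorem and with the references it cites.
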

	\subsection{Persistence  Diagrams}
	Fix an $n\in \N$ and a type $\tau\in \altmathcal{T}_n$. By Theorem \ref{krull}, every $\tau$-module $\pv\in\taumod{\tau}$ has a decomposition of the form 
	\begin{equation}\label{decomp}
	\pv \cong \mathbb{I}_\tau([b_1,d_1])\oplus\cdots\oplus\mathbb{I}_\tau([b_N,d_N]),\end{equation}
	this decomposition being unique up to the ordering of the summands.	We define the \textit{persistence diagram} of $\pv$ to be the multiset
	\[\dgm(\pv) : = \lms(b_i,d_i)\in\N\times \N \ | \ 1\leq i\leq N\rms,\]
	whose elements are ordered pairs of endpoints defining the interval modules in the decomposition (\ref{decomp}).
	In particular, we always have the decomposition 
	\begin{equation} \label{persdecomp}
	\pv \cong \bigoplus_{(b,d)\in \dgm(\pv)}\mathbb{I}_\tau([b,d]).\end{equation}
	Persistence diagrams thus characterize zigzag modules. That is, for fixed type $\tau$, a $\tau$-module determines and is determined up to isomorphism by its persistence diagram.
	
	There is a simple relationship between the persistence diagram of a zigzag module and the persistence diagram of any of its summands:
	\begin{proposition}\label{perssum}
		Fix $n\in \N$ and $\tau\in \altmathcal{T}_n$. If $\,\pw,\pv\in\taumod{\tau}$ with $\pw\preceq \pv$ then $\dgm(\pw)\subseteq\dgm(\pv)$.
	\end{proposition}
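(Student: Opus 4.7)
The plan is to reduce the statement to the uniqueness clause of the Krull--Remak--Schmidt/Gabriel theorem (Theorem \ref{krull}) by expressing both sides of the relation $\pw \preceq \pv$ in terms of their interval decompositions and comparing.

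First, I would unpack the hypothesis. Since $\pw \preceq \pv$, there exists by definition a $\tau$-module $\pu \in \taumod{\tau}$ such that $\pv \cong \pw \oplus \pu$. Apply Theorem \ref{krull} to each of $\pw$ and $\pu$ to obtain interval decompositions
\[ \pw \cong \bigoplus_{(b,d) \in \dgm(\pw)} \mathbb{I}_\tau([b,d]) \qquad \text{and} \qquad \pu \cong \bigoplus_{(b,d) \in \dgm(\pu)} \mathbb{I}_\tau([b,d]). \]

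Next, I would observe that direct sum of zigzag modules is associative and commutative up to isomorphism (this is immediate from the componentwise definition given at the start of Section \ref{sec:decompositions}), and so combining the two displayed decompositions yields
\[ \pv \;\cong\; \pw \oplus \pu \;\cong\; \bigoplus_{(b,d) \in \dgm(\pw) \,\sqcup\, \dgm(\pu)} \mathbb{I}_\tau([b,d]), \]
where $\sqcup$ denotes multiset union. On the other hand, the definition of $\dgm(\pv)$ provides the canonical decomposition (\ref{persdecomp}), namely $\pv \cong \bigoplus_{(b,d) \in \dgm(\pv)} \mathbb{I}_\tau([b,d])$.

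Finally, I would invoke the uniqueness clause of Theorem \ref{krull}: the multiset of interval summands appearing in any decomposition of $\pv$ into indecomposables is determined by $\pv$ up to ordering. Comparing the two decompositions of $\pv$ above forces the multiset equality $\dgm(\pv) = \dgm(\pw) \sqcup \dgm(\pu)$, from which $\dgm(\pw) \subseteq \dgm(\pv)$ is immediate. There is no real obstacle here; the only subtlety is being careful that the containment is understood as containment of \emph{multisets} (so that multiplicities of repeated intervals in $\dgm(\pw)$ are respected), which is exactly what the multiset union $\sqcup$ delivers.
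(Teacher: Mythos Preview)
Your proposal is correct and follows essentially the same argument as the paper's own proof: decompose $\pw$ and $\pu$ into intervals, combine to obtain an interval decomposition of $\pv$, and invoke the uniqueness clause of Theorem~\ref{krull} to deduce $\dgm(\pv) = \dgm(\pw)\sqcup\dgm(\pu)$, whence $\dgm(\pw)\subseteq\dgm(\pv)$.
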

	\begin{proof}
		Since $\pw\preceq \pv$ there exists $\pu\in\taumod{\tau}$ such that $\pv\cong \pw\oplus \pu$. Using the decomposition (\ref{persdecomp}), we have
		\[ \pv \cong \left( \bigoplus_{(b,d)\in\dgm(\pw)}\mathbb{I}([b,d])\right)\oplus \left( \bigoplus_{(b,d)\in\dgm(\pu)}\mathbb{I}([b,d])\right).\]
		By the uniqueness statement of Theorem \ref{krull}, $\dgm(\pv) = \dgm(\pw)\sqcup\dgm(\pu)$ so that $\dgm(\pw)\subseteq\dgm(\pv)$.
	\end{proof}

	\section{Type Transformations and Arrow Reversals} \label{sec:type-transformations}
	In this section we define several transformations of types, i.e. maps $\altmathcal{T}_n\to\altmathcal{T}_n$, whose purposes are mainly formal; they will serve to define the appropriate domains and codomains for the transformations of zigzag modules defined in Section \ref{sect_ref_func}.
	\subsection{Sinks and Sources} Fix $n\in \N$ and let $\tau\in \altmathcal{T}_n$.
	A zigzag module $\mathds{V} = (V_i,p_i)\in\taumod{\tau}$ has a \textit{sink} at index $k\in\{2,\hdots,n-1\}$ if it has the form 
	\[	\mathds{V} = \begin{tikzcd}[row sep=2em,column sep=3.5em]
	\cdots \arrow[r,"p_{k-2}"]& V_{k-1} \arrow[l]\arrow[r,"p_{k-1}"]& V_{k}& V_{k+1}\arrow[r,"p_{k+1}"]\arrow[l,swap,"p_k"]& \cdots\arrow[l].
	\end{tikzcd}\]
	In addition, we say that $\mathds{V}$ has a \textit{sink} at index 1 or index $n$ if the maps $p_1$ or $p_{n-1}$ are of the form $V_{1} \stackrel{p_1}{\longleftarrow} V_{2}$ or $V_{n-1} \stackrel{p_{n-1}}{\longrightarrow} V_{n}$, respectively.
	
	Similarly, $\mathds{V} = (V_i,p_i)\in\taumod{\tau}$ has a \textit{source} at index $k\in\{2,\hdots,n-1\}$ if it has the form 
	\[	\mathds{V} = \begin{tikzcd}[row sep=2em,column sep=3.5em]
	\cdots \arrow[r,"p_{k-2}"]& V_{k-1} \arrow[l]&\arrow[l,swap,"p_{k-1}"] V_{k}\arrow[r,"p_{k}"]& V_{k+1}\arrow[r,"p_{k+1}"]& \cdots\arrow[l],
	\end{tikzcd}\]
	and has a \textit{source} at index $1$ or $n$ if the maps $p_1$ or $p_n$ are of the form $V_{1} \stackrel{p_1}{\longrightarrow} V_{2}$ or $V_{n-1} \stackrel{p_{n-1}}{\longleftarrow} V_{n}$, respectively. Equivalently, a $\tau$-module $\mathds{V}$ has a sink at index $k\in\{1,\hdots,n\}$ if none of the linear maps $p_i$ have domain $V_k$, and $\mathds{V}$ has a source at index $k$ if none of the linear maps $p_i$ have codomain $V_k$.
	
	Note that the property of having a sink or source at a given index depends only on the type of the zigzag module in question; that is, if a $\tau$-module $\mathds{V}$ has a sink or source at index $k\in\{1,\hdots,n\}$ then any other $\tau$-module will also have, respectively, a sink or source at index $k$. This leads to the following definitions: 
	\begin{definition}\label{sink_source_flow}We say a type $\tau\in\altmathcal{T}_n$ has a \textit{sink} or \textit{source} at index $k$ if any (and hence every) $\tau$-module has, respectively, a sink or source at index $k$. Otherwise, $\tau$ is said to have a \textit{flow} at index $k$. If both the $(k-1)^\textup{st}$ and $k^\textup{th}$ entries of $\tau$ are $\rightarrow$, then $\tau$ is said to have a $\textit{forward flow}$, and a flow which is not a forward flow is called a \textit{backwards flow}.
	\end{definition}
	
	\begin{caution}
		When we make mention of a type $\tau\in\altmathcal{T}_n$ as having a sink or source at index $k\in\{1,\hdots,n\}$, we mean that any $\tau$-module has a sink or source, respectively, at index $k$. The word ``index" here \textbf{does not} refer to the $k^{\textup{th}}$ component of the sequence of arrows defining $\tau$.
	\end{caution}
	\subsection{Type Transformations}
	\begin{definition} \label{type_reversal}Fix $n\in \N$. For each $k\in\{1,\hdots,n-1\}$, we define the \textit{$k^\textup{th}$ reversal map}
		\[ r_k:\altmathcal{T}_n\to \altmathcal{T}_n\]
		which maps a type $\tau$ to a type $r_k\tau$ whose $k^{th}$ entry is obtained by reversing the $k^\textup{th}$ entry of $\tau$.
	\end{definition}
	\begin{example}
		Let $n = 4$ and $\tau = (\rightarrow,\leftarrow,\rightarrow)\in\altmathcal{T}_n$. We have
		\[r_1\tau = (\leftarrow,\leftarrow,\rightarrow), \quad r_2\tau = (\rightarrow,\rightarrow,\rightarrow), \quad r_3\tau = (\rightarrow,\leftarrow,\leftarrow).\]
	\end{example} 
	
	\begin{definition}\label{type_extroversion}
		For fixed $n$ and for each $k\in\{1,\hdots,n\}$, we define the \textit{$k^\textup{th}$ extroversion map}
		\[ \sigma_k:\altmathcal{T}_n\to\altmathcal{T}_n\]
		which maps a type $\tau$ to a type $\sigma_k\tau$ which is obtained by placing a \textit{source} at index $k$ in $\tau$.
	\end{definition}
	More precisely, for $k\in\{2,\hdots,n-1\}$, $\sigma_k\tau$ is obtained from $\tau$ by replacing the $(k-1)^\textup{st}$ and $k^\textup{th}$ entries of $\tau$ by $\leftarrow$ and $\rightarrow$, respectively. The type $\sigma_1\tau$ is obtained by replacing the $1^\textup{st}$ entry of $\tau$ by $\rightarrow$ and the type $\sigma_n\tau$ is obtained by replacing the $(n-1)^\textup{st}$ entry of $\tau$ by $\leftarrow$. Similarly, we make the following:
	
	\begin{definition} \label{type_introversion} For fixed $n$ and for each $k\in\{1,\hdots,n\}$, we define the \textit{$k^\textup{th}$ introversion map} 
		\[ \zeta_k:\altmathcal{T}_n\to\altmathcal{T}_n\]
		which maps a type $\tau$ to the type $\zeta_k\tau$ which is obtained by placing a \textit{sink} at index $k$ in $\tau$.
	\end{definition}
	
	\begin{example}\label{type_intro_extro_example}
		Let $n = 4$ and $\tau = (\rightarrow,\rightarrow,\leftarrow)\in\altmathcal{T}_n$. Then we have
		\begin{equation*}
		\begin{aligned}[c]
		\sigma_1\tau = \tau & = (\rightarrow,\rightarrow,\leftarrow)  \\
		\sigma_3\tau & = (\rightarrow,\leftarrow,\rightarrow)
		\end{aligned}\quad
		\begin{aligned}[c]
		\sigma_2\tau & = (\leftarrow,\rightarrow,\leftarrow)\\
		\sigma_4\tau = \tau& = (\rightarrow,\rightarrow,\leftarrow).
		\end{aligned}
		\end{equation*}
		and
		\begin{equation*}
		\begin{aligned}[c]
		\zeta_1\tau & = (\leftarrow,\rightarrow,\leftarrow)  \\
		\zeta_3\tau = \tau & = (\rightarrow,\rightarrow,\leftarrow)
		\end{aligned}\quad
		\begin{aligned}[c]
		\zeta_2\tau & = (\rightarrow,\leftarrow,\leftarrow)\\
		\zeta_4\tau & = (\rightarrow,\rightarrow,\rightarrow).
		\end{aligned}
		\end{equation*}
	\end{example}
	\subsection{Arrow Reversals}
	We wish to identify zigzag modules $\pv,\pw\in\nmod$ which differ only in the direction of arrows representing isomorphisms. For example, the zigzag modules
	\[	\begin{tikzcd}[column sep=2em]
	0\arrow[r,"0"]&\mathbb{F}\arrow[r,"\textup{id}"]& \mathbb{F}\arrow[r,"0"] & 0 
	\end{tikzcd}\quad \textup{ and } \quad	\begin{tikzcd}[column sep=2em]
	0\arrow[r,"0"]&\mathbb{F}\arrow[r,leftarrow,"\textup{id}"]& \mathbb{F}\arrow[r,"0"] & 0 \end{tikzcd}\]
	contain the same information and we wish to regard them as equivalent. The goal of this section is to establish notation for dealing with zigzag modules which are to be regarded as equivalent in this way.
	
	\begin{definition}
		Fix $\tau\in\altmathcal{T}_n$. For each $k\in\{1,\hdots,n-1\}$ we define $\taumod{\tau}^{\textup{iso},k}\subset\taumod{\tau}$ by setting 
		\[\taumod{\tau}^{\textup{iso},k} : = \{\pv =  (V_i,p_i) \ | \ p_k \textup{ is an isomorphism}\}.\]
		Recall the type reversal map $r_k:\altmathcal{T}_n\to\altmathcal{T}_n$ of Definition \ref{type_reversal} which reverses the $k^\textup{th}$ arrow of a given $\tau\in \altmathcal{T}_n$. We define a map 
		\[\altmathcal{A}_k:\taumod{\tau}^{\textup{iso},k}\to \taumod{r_k\tau}^{\textup{iso},k}\]
		by setting $\altmathcal{A}_k(\pv) = (V_i,q_i)\in\taumod{r_k\tau}$, where $q_i = p_i$ for $i\neq k$ and $q_k = p_k^{-1}$.
	\end{definition}
	
	\begin{remarks}\label{arr_rev_remarks} \hfill
		\begin{enumerate}  \item If $V_{i} \stackrel{p}{\longrightarrow} V_{j}$ is an isomorphism appearing in $\pv$, and $V_j\stackrel{q}{\longleftrightarrow} V_k$ is an adjacent arrow, then we have the following isomorphisms of zigzag modules:
			\[
			\begin{tikzcd}[row sep=3em,column sep=4em]
			\cdots \arrow[r]& V_i\arrow[l]\arrow[r,"p"]\arrow[d,"\textup{id}"]&V_j\arrow[r,"q"]\arrow[d,"p^{-1}"]& V_k \arrow[l]\arrow[r]\arrow[d,"\textup{id}"]&\cdots\arrow[l]\\
			\cdots \arrow[r]& V_i\arrow[l]\arrow[r,"\textup{id}"]&V_i\arrow[r,"\alpha"]& V_k \arrow[l]\arrow[r]&\cdots\arrow[l]
			\end{tikzcd}
			\]
			where $\alpha = qp$ if $V_j\stackrel{q}{\longrightarrow}V_k$ and $\alpha = p^{-1}q$ if $V_j\stackrel{q}{\longleftarrow}V_k$ (and all vertical maps not appearing are identities). Thus it is safe to assume without loss of generality that all structure maps which are isomorphisms are in fact identities, in which case $\altmathcal{A}_k(\pv)$ is obtained by changing the type of $\,\pv$ from $\tau$ to $r_k\tau$ but leaving all of the vector spaces $V_i$ and linear maps $p_i$ unchanged.
			
			\item Note that $\altmathcal{A}_k\circ\altmathcal{A}_k = \textup{id}_{\taumod{\tau}^{\textup{iso},k}}$ since reversing the direction of an isomorphism twice leaves the zigzag module unchanged.
			
		\end{enumerate}
	\end{remarks}
	
	We make the following observations about arrow reversals:
	\begin{proposition}\label{arrow_reversal_properties}
		Let $\tau\in\altmathcal{T}_n$ and suppose that $\pv\in\taumod{\tau}^{\textup{iso},i}$ for some $i\in\{1,\dots,n-1\}$.
		\begin{enumerate}[(1)]
			\item If $\,\pv\cong \pw$ then $\pw\in\taumod{\tau}^{\textup{iso},i}$ and $\altmathcal{A}_i(\pv)\cong \altmathcal{A}_i(\pw)$,
			\item If $\,\pw\in\taumod{\tau}^{\textup{iso},i}$ then $\pv\oplus\pw\in\taumod{\tau}^{\textup{iso},i}$ and $\altmathcal{A}_i(\pv\oplus\pw) = \altmathcal{A}_i(\pv)\oplus\altmathcal{A}_i(\pw)$,
			\item If $\,\pw\preceq \pv$ then $\pw\in\taumod{\tau}^{\textup{iso},i}$ and $\altmathcal{A}_i(\pw)\preceq\altmathcal{A}_i(\pv)$,
			\item If $\,\pv\in\taumod{\tau}^{\textup{iso},i}\cap\taumod{\tau}^{\textup{iso},j}$ for some $j\in\{1,\dots,n-1\}$ then $\altmathcal{A}_i\altmathcal{A}_j(\pv) = \altmathcal{A}_j\altmathcal{A}_i(\pv)$.
		\end{enumerate}
		\begin{proof}
			(1) That $\pw\in\taumod{\tau}^{\textup{iso},i}$ is immediate from the commutativity relations imposed on the components of a $\tau$-module isomorphism. Moreover, it is easily verified that the components of any isomorphism $\phi:\pv\to\pw$ of $\tau$-modules will also serve as the components of an isomorphism of $r_i\tau$-modules between $\altmathcal{A}_i(\pv)$ and $\altmathcal{A}_i(\pw)$.
			
			(2) This follows immediately after noticing that $(p_i\oplus q_i)^{-1} = p_i^{-1}\oplus q_i^{-1}$, where $p_i$ and $q_i$ denote the $i^\textup{th}$ structure maps of $\pv$ and $\pw$, respectively.
			
			(3) If $\pw\preceq\pv\in\taumod{\tau}^{\textup{iso},i}$ then the $i^\textup{th}$ structure map of $\pv$ is an isomorphism so that, by Proposition \ref{types_of_summand_maps}, the $i^\textup{th}$ structure map of $\pw$ is an isomorphism as well and hence $\pw\in\taumod{\tau}^{\textup{iso},i}$. Now if $\pu\in \taumod{\tau}$ is such that $\pv \cong \pw\oplus\pu$ then by parts (1) and (2), we have
			\[\altmathcal{A}_i(\pv)\cong \altmathcal{A}_i(\pw\oplus\pu) = \altmathcal{A}_i(\pw)\oplus\altmathcal{A}_i(\pu),\]
			and thus $\altmathcal{A}_i(\pw)\preceq\altmathcal{A}_i(\pv)$.
			
			(4) If $i = j$ then the result follows immediately. If $i\neq j$, then the result follows by noting that $\altmathcal{A}_i$ and $\altmathcal{A}_j$ operate on different linear maps $p_i$ and $p_j$, so that the order in which they are applied does not matter.
		\end{proof}
	\end{proposition}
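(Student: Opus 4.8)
The plan is to prove the four parts in order, since (3) rests on (1) and (2) while (4) is essentially independent. The structural fact underlying everything is that $\altmathcal{A}_i$ modifies only the $i^\textup{th}$ structure map — replacing $p_i$ by $p_i^{-1}$ and reversing its direction — and leaves every vector space and every other structure map untouched; equivalently, via Remarks \ref{arr_rev_remarks}, one may first arrange that all isomorphism structure maps are identities, so that $\altmathcal{A}_i$ becomes a pure relabelling of the type.

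For (1), start from an isomorphism $\phi = \{\phi_j\}:\pv\to\pw$. Commutativity of the defining square at index $i$ exhibits the $i^\textup{th}$ structure map $q_i$ of $\pw$ as $\phi_{i+1}\,p_i\,\phi_i^{-1}$ (or the analogous conjugate if the $i^\textup{th}$ arrow points backward), hence as a composite of bijections; so $q_i$ is an isomorphism and $\pw\in\taumod{\tau}^{\textup{iso},i}$. I would then check that the same family $\{\phi_j\}$ is an isomorphism $\altmathcal{A}_i(\pv)\to\altmathcal{A}_i(\pw)$: only the square at index $i$ is affected by the reversal, and the relation one needs there, $\phi_i\,p_i^{-1} = q_i^{-1}\,\phi_{i+1}$, follows from the original relation $q_i\,\phi_i = \phi_{i+1}\,p_i$ by left- and right-multiplying by $q_i^{-1}$ and $p_i^{-1}$. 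For (2), the $i^\textup{th}$ structure map of $\pv\oplus\pw$ is $p_i\oplus q_i$, an isomorphism with inverse $p_i^{-1}\oplus q_i^{-1}$; since $\altmathcal{A}_i$ alters nothing else, $\altmathcal{A}_i(\pv\oplus\pw)$ and $\altmathcal{A}_i(\pv)\oplus\altmathcal{A}_i(\pw)$ coincide entry by entry.

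For (3), write $\pv\cong\pw\oplus\pu$ for some $\pu\in\taumod{\tau}$. Since the $i^\textup{th}$ structure map of $\pv$ is an isomorphism, Proposition \ref{types_of_summand_maps} forces the $i^\textup{th}$ structure maps of $\pw$ and $\pu$ to be isomorphisms as well, so $\pw,\pu\in\taumod{\tau}^{\textup{iso},i}$; now (1) and (2) give $\altmathcal{A}_i(\pv)\cong\altmathcal{A}_i(\pw)\oplus\altmathcal{A}_i(\pu)$, i.e.\ $\altmathcal{A}_i(\pw)\preceq\altmathcal{A}_i(\pv)$. For (4), the case $i=j$ is immediate (and $\altmathcal{A}_i\circ\altmathcal{A}_i=\textup{id}$ by Remarks \ref{arr_rev_remarks}); when $i\neq j$, note first that $\altmathcal{A}_j$ leaves the $i^\textup{th}$ structure map an isomorphism (and vice versa), so both iterated composites are legitimate, and then observe that $\altmathcal{A}_i$ and $\altmathcal{A}_j$ act on disjoint pieces of the data, so the order of application is irrelevant.

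None of these steps is deep; the only genuine care needed is bookkeeping of arrow directions and domains/codomains — in (1) when transporting the commuting square across the reversal while treating both orientations of the $i^\textup{th}$ arrow uniformly, and in (4) when verifying that the intermediate module still lies in a subcategory of the form $\taumod{\rho}^{\textup{iso},k}$ so that the second reversal is legitimate. I expect carrying out (1) cleanly to be the main (and only mild) obstacle.
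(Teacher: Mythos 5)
Your proof is correct and follows essentially the same route as the paper's: conjugation by isomorphism components for (1), $(p_i\oplus q_i)^{-1}=p_i^{-1}\oplus q_i^{-1}$ for (2), Proposition \ref{types_of_summand_maps} plus (1) and (2) for (3), and disjointness of the modified data for (4). The only differences are that you spell out the commuting-square manipulation in (1) and explicitly note in (3) that $\pu\in\taumod{\tau}^{\textup{iso},i}$ — a detail the paper leaves implicit but which is indeed needed to invoke part (2).
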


	\subsection{Equivalence of Zigzag Modules}
	We now define an equivalence relation on $\nmod$, formalizing our discussion at the beginning of the previous section. For $\pv,\pw\in\nmod$ we write $\pv\sim\pw$ if and only if either $\pv\cong \pw$ or there is a finite sequence $k_1,\hdots,k_j$ of indices in $\{1,\hdots,n-1\}$ such that 
	\[\pw\cong \altmathcal{A}_{k_j}\altmathcal{A}_{k_{j-1}}\cdots\altmathcal{A}_{k_1}(\pv).\]
	In words, $\pv\sim\pw$ if $\pw$ can be obtained, up to isomorphism, from $\pv$ by reversing some (possibly empty) set of arrows representing isomorphisms. Reflexivity of the relation $\sim$ is clear, while symmetry  follows from Remark \ref{arr_rev_remarks} (2) and transitivity from Proposition \ref{arrow_reversal_properties} (1). Thus $\sim$ does indeed define an equivalence relation on the isomorphism classes of $n\textup{-Mod}$.

	\begin{remark}
		Note that $\mathdszero_\tau\sim\mathdszero_{\tau'}$ for any types $\tau,\tau'\in\altmathcal{T}_n$. Hence, in what follows we drop the subscript indicating type and denote the zero module of any type by $\mathdszero$.
	\end{remark}
	
	The proof of the next proposition is sketched out by Oudot in \cite{oudotbook}; we give the full details here:
	\begin{proposition}[\cite{oudotbook}]\label{simpers}
		Let $\pv,\pw\in\nmod$. If $\,\pv\sim\pw$ then \[\dgm(\pv) = \dgm(\pw).\]
		\begin{proof} Let $\tau = \type(\pv)$ and write $\pv \cong \bigoplus_{(b,d)\in\dgm(\pv)}\mathbb{I}_\tau([b,d])$. Since $\pv\sim\pw$, there is a sequence of indices $k_1,k_2,\hdots,k_j\in\{1,\hdots,n-1\}$ such that
			\[\pw\cong \altmathcal{A}_{k_j}\altmathcal{A}_{k_{j-1}}\cdots\altmathcal{A}_{k_1}(\pv).\]
			Let $\tau' = r_{k_j}r_{k_{j-1}}\cdots r_{k_1}\tau$ so that $\type(\pw) = \tau'$ and consider the zigzag module $\pu\in\taumod{\tau'}$ defined by
			\[\pu:= \bigoplus_{(b,d)\in\dgm(\pv)}\mathbb{I}_{\tau'}([b,d]).\]
			By definition of $\pu$, we have $\dgm(\pu) = \dgm(\pv)$. We claim that $\pu\cong\pw$. To see this, notice that if $V_{i} \stackrel{p}{\longrightarrow} V_{j}$ is a structure map of $\pv$ with $p$ being an isomorphism and $i,j$ being consecutive integers in $\{1,\hdots,n\}$, and if $d :=\textup{dim}(V_i) = \textup{dim}(V_{j})$, then there is an isomorphism $\psi:V_i\to \bigoplus_{m=1}^d\mathbb{F}$ so that the diagram 
			\[\begin{tikzcd}[row sep=2.5em,column sep=3.5em]
			V_i\arrow[r,"p"]\arrow[d,"\psi"]& V_j \arrow[d,"\psi\circ p^{-1}"]\\
			\bigoplus_{m = 1}^{d}\mathbb{F}\arrow[r,"\textup{id}"]& \bigoplus_{m = 1}^{d}\mathbb{F}
			\end{tikzcd}\]
			commutes. The above diagram commutes if and only if the diagram
			\[\begin{tikzcd}[row sep=2.5em,column sep=3.5em]
			V_i\arrow[d,"\psi"]& V_j \arrow[l,swap,"p^{-1}"]\arrow[d,"\psi\circ p^{-1}"]\\
			\bigoplus_{m = 1}^{d}\mathbb{F}&\arrow[l,swap,"\textup{id}"] \bigoplus_{m = 1}^{d}\mathbb{F}
			\end{tikzcd}\]
			commutes. Applying this principle to every square at which an arrow reversal is applied, we see that $\pu\cong \pw$. Hence $\dgm(\pv) = \dgm(\pu) = \dgm(\pw)$ by Theorem \ref{krull}.
		\end{proof}
	\end{proposition}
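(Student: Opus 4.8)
The plan is to peel off one arrow reversal at a time. Since, for a fixed type, a zigzag module is determined up to isomorphism by its persistence diagram (Theorem~\ref{krull}), the case $\pv\cong\pw$ already gives $\dgm(\pv)=\dgm(\pw)$, so it suffices to establish the one-step statement: if $\tau\in\altmathcal{T}_n$, $k\in\{1,\dots,n-1\}$, and $\pv\in\taumod{\tau}^{\textup{iso},k}$, then $\dgm(\altmathcal{A}_k(\pv))=\dgm(\pv)$. Granting this, whenever $\pw\cong\altmathcal{A}_{k_j}\cdots\altmathcal{A}_{k_1}(\pv)$ one applies the one-step statement $j$ times along the chain of intermediate modules — each of which lies in the domain of the next reversal, since otherwise the composite would not be defined — and uses invariance of $\dgm$ under isomorphism to conclude $\dgm(\pw)=\dgm(\pv)$. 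A clean way to organize this is by induction on $j$.

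For the one-step statement, decompose $\pv$ via Theorem~\ref{krull} as $\pv\cong\bigoplus_{(b,d)\in\dgm(\pv)}\mathbb{I}_\tau([b,d])$. Each interval summand is a summand of $\pv\in\taumod{\tau}^{\textup{iso},k}$, hence lies in $\taumod{\tau}^{\textup{iso},k}$ by Proposition~\ref{arrow_reversal_properties}(3). Applying Proposition~\ref{arrow_reversal_properties}(1) and then iterating Proposition~\ref{arrow_reversal_properties}(2) over the decomposition gives $\altmathcal{A}_k(\pv)\cong\bigoplus_{(b,d)\in\dgm(\pv)}\altmathcal{A}_k(\mathbb{I}_\tau([b,d]))$. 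It then remains to identify $\altmathcal{A}_k(\mathbb{I}_\tau([b,d]))$, and here a short case check suffices: the $k$-th structure map of $\mathbb{I}_\tau([b,d])$ is $\textup{id}_\mathbb{F}$ when $b\le k<d$ and is $0$ otherwise, so the only way for it to be an isomorphism is to be either $\textup{id}_\mathbb{F}\colon\mathbb{F}\to\mathbb{F}$ or the zero map $0\to 0$; in both cases it equals its own inverse, so $\altmathcal{A}_k$ leaves all vector spaces and structure maps unchanged and merely records the new type $r_k\tau$. Thus $\altmathcal{A}_k(\mathbb{I}_\tau([b,d]))=\mathbb{I}_{r_k\tau}([b,d])$, whence $\altmathcal{A}_k(\pv)\cong\bigoplus_{(b,d)\in\dgm(\pv)}\mathbb{I}_{r_k\tau}([b,d])$, and the uniqueness clause of Theorem~\ref{krull} yields $\dgm(\altmathcal{A}_k(\pv))=\dgm(\pv)$.

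The only place requiring any care is this last identification: one must notice that, for an interval module, a structure map that happens to be an isomorphism has no choice but to be $\textup{id}_\mathbb{F}$ or the zero map on the zero space — both self-inverse — and one must be mindful that the iterated reversals in the definition of $\sim$ are legitimate, i.e., that each intermediate module remains in the domain of the next $\altmathcal{A}_{k_i}$; both points follow from Propositions~\ref{types_of_summand_maps} and~\ref{arrow_reversal_properties}. Compared with the route of directly exhibiting the module $\bigoplus_{(b,d)\in\dgm(\pv)}\mathbb{I}_{\tau'}([b,d])$ of the final type $\tau'=r_{k_j}\cdots r_{k_1}\tau$ and checking it is isomorphic to $\pw$, this approach trades one global identification for a single one-step lemma together with an induction.
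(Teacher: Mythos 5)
Your proof is correct, and it organizes the argument differently from the paper's. The paper works globally: it builds the target module $\pu=\bigoplus_{(b,d)\in\dgm(\pv)}\mathbb{I}_{\tau'}([b,d])$ of the \emph{final} type $\tau'$ all at once, and proves $\pu\cong\pw$ by exhibiting, at each square where an arrow was reversed, a basis change $\psi$ conjugating the general isomorphism $p$ to the identity; one then needs to assemble these basis changes consistently across the whole module. You instead peel off one reversal at a time via a one-step lemma ($\dgm(\altmathcal{A}_k(\pv))=\dgm(\pv)$) proved by first pushing the interval decomposition through $\altmathcal{A}_k$ using Proposition~\ref{arrow_reversal_properties}(1)--(3), and then observing that an interval module's structure map, when it is an isomorphism, is literally $\textup{id}_\mathbb{F}$ or $0\colon 0\to 0$, hence self-inverse; this makes $\altmathcal{A}_k(\mathbb{I}_\tau([b,d]))=\mathbb{I}_{r_k\tau}([b,d])$ a tautology and sidesteps the choice of the coordinate map $\psi$ entirely. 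What your route buys is modularity and a lower verification burden: the ``consistency of basis changes across squares'' issue that the paper handles somewhat informally never arises, because each one-step identification is already at the level of interval summands with canonical structure maps. The small cost is that you must note, as you do, that the iterated reversals are well-defined (each intermediate module lies in $\taumod{\cdot}^{\textup{iso},k_i}$), which the paper implicitly absorbs into the definition of $\sim$.
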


	\begin{definition}
		We define a relation $\precsim$ on $\nmod$ by declaring $\pw\precsim \pv$ if and only if there exists a zigzag module $\pw'\in\nmod$ with $\pw\sim \pw'$ and $\pw'\preceq\pv$.
	\end{definition} 
	In words, $\pw\precsim\pv$ if we can obtain a summand of $\pv$ by reversing any number of the arrows of $\pw$ representing isomorphisms.
	
	\begin{proposition}\label{precsim_preorder} $\precsim$ is a preorder on $\nmod$. Moreover, $\pv\sim \pw$ if and only if $\pw\precsim\pv$ and $\pv\precsim\pw$ so that $\precsim$ induces a partial order on $\nmod/\sim$.
		\begin{proof}
			Since $\pv\sim\pv$ and $\pv\preceq\pv$, we have $\pv\precsim\pv$. If $\pv_1\precsim\pv_2$ and $\pv_2\precsim\pv_3$ then there are zigzag modules $\pw_1$ and $\pw_2$ such that $\pv_1\sim\pw_1\preceq\pv_2$ and $\pv_2\sim\pw_2\preceq\pv_3$. That is, there are compositions of arrow reversals $\altmathcal{A}$ and $\altmathcal{B}$ such that $\altmathcal{A}(\pv_1) \cong\pw_1\preceq \pv_2$ and $\altmathcal{B}(\pv_2)\cong\pw_2\preceq \pv_3$. Then by Proposition \ref{arrow_reversal_properties} parts (1) and (3), we have $\altmathcal{B}\altmathcal{A}(\pv_1) \preceq\altmathcal{B}(\pv_2)\cong\pw_2\preceq \pv_3$ so that $\pv_1\precsim\pv_3$. This shows that $\precsim$ is a preorder.
			
			Now if $\pv\sim\pw$ then $\pv\sim\pw\preceq\pw$ and $\pw\sim\pv\preceq\pv$ so that $\pw\precsim\pv$ and $\pv\precsim\pw$. Conversely, if $\pw\precsim\pv$ and $\pv\precsim\pw$ then there exists zigzag modules $\pw'\preceq\pw$ and $\pv'\preceq\pv$ such that $\pw\sim\pv'$ and $\pv\sim\pw'$. Let $\pu_1,\pu_2\in\nmod$ be such that
			\[ \pv\cong \pv'\oplus\pu_1 \quad \textup{ and } \quad \pw\cong \pw'\oplus\pu_2.\]
			Since $\pw\sim\pv'$, there is a composition of arrow reversals $\altmathcal{A}$ such that $\altmathcal{A}(\pw)\cong \pv'$, and similarly there is a composition of arrow reversals $\altmathcal{B}$ such that $\altmathcal{B}(\pv)\cong \pw'$. Hence $\pw\cong \pw'\oplus\pu_2 \cong \altmathcal{B}(\pv)\oplus \pu_2$. Then we have
			\begin{align*}\pv' & \cong \altmathcal{A}(\pw) \cong \altmathcal{A}(\altmathcal{B}(\pv)\oplus\pu_2)\\
			& = \altmathcal{A}(\altmathcal{B}(\pv))\oplus\altmathcal{A}(\pu_2) \cong \altmathcal{A}(\altmathcal{B}(\pv'\oplus\pu_1))\oplus\altmathcal{A}(\pu_2)\\
			& = \altmathcal{A}(\altmathcal{B}(\pv'))\oplus \altmathcal{A}(\altmathcal{B}(\pu_1))\oplus \altmathcal{A}(\pu_2).
			\end{align*}
			Using the above isomorphisms together with Propositions \ref{perssum} and \ref{simpers}, we have
			\begin{align*}
			\dgm(\pv') & = \dgm(\altmathcal{A}(\altmathcal{B}(\pv'))\oplus \altmathcal{A}(\altmathcal{B}(\pu_1))\oplus \altmathcal{A}(\pu_2))\\
			& = \dgm(\altmathcal{A}(\altmathcal{B}(\pv')))\sqcup\dgm(\altmathcal{A}(\altmathcal{B}(\pu_1)))\sqcup\dgm(\altmathcal{A}(\pu_2)))\\
			& = \dgm(\pv')\sqcup\dgm(\pu_1)\sqcup\dgm(\pu_2)
			\end{align*}
			so that $\dgm(\pu_1) = \emptyset = \dgm(\mathbb{U}_2)$. Hence $\mathbb{U}_1\sim\mathdszero\sim\mathbb{U}_2$ so that in fact
			\[ \pv\cong \pv'\quad \textup{ and } \quad \pw\cong \pw'.\]
			Thus $\pv\sim \pw$, completing the proof.			
		\end{proof}
	\end{proposition}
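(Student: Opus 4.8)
The plan is to establish, in order: (i) that $\precsim$ is reflexive and transitive; (ii) the forward implication of the stated equivalence; (iii) the converse implication; and (iv) that these together force $\precsim$ to descend to a partial order on $\nmod/\sim$.

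For (i), reflexivity is immediate since $\pv\sim\pv$ and $\pv\preceq\pv$ (take the complementary summand to be $\mathdszero$). For transitivity, if $\pv_1\precsim\pv_2$ and $\pv_2\precsim\pv_3$ then unwinding the definition produces compositions of arrow reversals $\altmathcal{A},\altmathcal{B}$ with $\altmathcal{A}(\pv_1)\preceq\pv_2$ and $\altmathcal{B}(\pv_2)\preceq\pv_3$; applying $\altmathcal{B}$ to the first relation via Proposition~\ref{arrow_reversal_properties}(3) yields $\altmathcal{B}\altmathcal{A}(\pv_1)\preceq\altmathcal{B}(\pv_2)\preceq\pv_3$, and since $\pv_1\sim\altmathcal{B}\altmathcal{A}(\pv_1)$ we get $\pv_1\precsim\pv_3$. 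For (ii), if $\pv\sim\pw$ then $\pw\sim\pv\preceq\pv$ and $\pv\sim\pw\preceq\pw$, so $\pw\precsim\pv$ and $\pv\precsim\pw$.

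The heart of the argument is (iii). Suppose $\pw\precsim\pv$ and $\pv\precsim\pw$; then there are summands $\pv'\preceq\pv$, $\pw'\preceq\pw$ with $\pw\sim\pv'$, $\pv\sim\pw'$, say $\pv\cong\pv'\oplus\pu_1$ and $\pw\cong\pw'\oplus\pu_2$. Choosing arrow-reversal compositions $\altmathcal{A},\altmathcal{B}$ with $\altmathcal{A}(\pw)\cong\pv'$ and $\altmathcal{B}(\pv)\cong\pw'$, and substituting repeatedly while distributing reversals over $\oplus$ via Proposition~\ref{arrow_reversal_properties}(2), I would derive
\[\pv'\cong\altmathcal{A}\altmathcal{B}(\pv')\oplus\altmathcal{A}\altmathcal{B}(\pu_1)\oplus\altmathcal{A}(\pu_2).\]
Taking persistence diagrams, using Theorem~\ref{krull} to split the diagram of a direct sum and Proposition~\ref{simpers} to discard the reversals, gives $\dgm(\pv')=\dgm(\pv')\sqcup\dgm(\pu_1)\sqcup\dgm(\pu_2)$. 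As $\dgm(\pv')$ is a finite multiset (all spaces are finite dimensional), comparing sizes forces $\dgm(\pu_1)=\dgm(\pu_2)=\emptyset$, whence $\pu_1\sim\mathdszero\sim\pu_2$ and so $\pv\cong\pv'$, $\pw\cong\pw'$. Then $\pw\sim\pv'\cong\pv$ yields $\pv\sim\pw$.

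The main obstacle is keeping the bookkeeping in (iii) honest: at each step one must check that the arrow-reversal composition being applied is legitimate — i.e. that the structure maps it reverses are still isomorphisms in the module it acts on — which is exactly what the stability of the $\taumod{\tau}^{\textup{iso},k}$ classes under summands, isomorphisms, and direct sums (Proposition~\ref{arrow_reversal_properties}) is there to guarantee. Given (i)--(iii), part (iv) is formal: the forward implication together with transitivity shows the relation induced by $\precsim$ on $\nmod/\sim$ is well defined, reflexivity and transitivity are inherited, and the equivalence of (ii)--(iii) is precisely antisymmetry, so $\precsim$ induces a partial order.
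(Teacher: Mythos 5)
Your proposal is correct and follows essentially the same route as the paper's proof: reflexivity and transitivity via Proposition~\ref{arrow_reversal_properties}, the forward implication directly from the definition, and the converse via the identical decomposition $\pv\cong\pv'\oplus\pu_1$, $\pw\cong\pw'\oplus\pu_2$ leading to $\dgm(\pv')=\dgm(\pv')\sqcup\dgm(\pu_1)\sqcup\dgm(\pu_2)$ and the conclusion $\pu_1\sim\mathdszero\sim\pu_2$. The only (welcome) additions are making explicit that finiteness of the diagram is what allows the size comparison, and flagging the bookkeeping point that arrow-reversal compositions remain legal on isomorphic modules and on summands, which the paper handles by citing Proposition~\ref{arrow_reversal_properties}.
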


	\section{Reflection Functors}\label{sect_ref_func} \label{sec:ref-functors}
	In this section, we define \textit{reflection functors}, first introduced by Bernstein, Gelfand, and Ponomarev in \cite{bgp} as a tool for proving Gabriel's Theorem. Reflection functors were more recently applied to zigzag persistence by Kalisnik in \cite{kalisnik_thesis} to give an alternative proof of Carlsson and de Silva's Diamond Principle \cite{zigzag}.
	
	In the language of category theory, reflection functors send a diagram in the category of vector spaces to a new diagram in that same category, obtained by replacing particular subdiagrams by \textit{universal cones} or \textit{cocones}. We refer to the reader to Appendix \ref{app:cats} for a review of the necessary categorical notions.
	
	\subsection{Reflections on Zigzags}\label{sec:reflections}
	Suppose that $\pv = (V_i,p_i)\in\taumod{\tau}$ for some $\tau\in \altmathcal{T}_n$. For $k\in \{2,\hdots,n-1\}$, we isolate the subdiagram
	\begin{equation}\label{subdiagram}
	\begin{tikzcd}[row sep=2em,column sep=3.5em]
	V_{k-1}\arrow[r,"p_{k-1}"]& V_k \arrow[l]\arrow[r,"p_k"] &V_{k+1}\arrow[l].
	\end{tikzcd}
	\end{equation}
	We compute the limit $(L_k,\lambda_j)$ of the extracted diagram and then consider the new diagram
	\begin{equation} \label{extro_zigzag}
	\begin{tikzcd}[row sep=2em,column sep=3.5em]
	V_{k-1}&\arrow[l,swap,"\lambda_{k-1}"] L_k \arrow[r,"\lambda_k"] &V_{k+1},
	\end{tikzcd}
	\end{equation}
	and then define $\altmathcal{L}_k(\pv)\in\taumod{\sigma_k\tau}$ to be the zigzag module obtained by replacing the appearance of the subdiagram (\ref{subdiagram}) in $\pv$ by diagram (\ref{extro_zigzag}). That is, a map
	\[\altmathcal{L}_k:\taumod{\tau}\to \taumod{\sigma_k\tau}\]
	is specified by the following diamond diagram relating $\pv$ and $\altmathcal{L}_k(\pv)$:
	\[\begin{tikzcd}[row sep=1.5em,column sep=2em]	\pv = \\
	\\ \altmathcal{L}_k(\pv) := 
	\end{tikzcd}
	\begin{tikzcd}[row sep=2em,column sep=3em]	&&&\arrow[dl] V_k&&&\\	V_1\arrow[r,"p_1"]& 
	\cdots \arrow[r,"p_{k-2}"]\arrow[l]& V_{k-1} \arrow[l]\arrow[ur,"p_{k-1}"]&  & V_{k+1}\arrow[r,"p_{k+1}"]\arrow[ul]\arrow[ul,swap,leftarrow,"p_k"]& \cdots\arrow[l]\arrow[r,"p_{n-1}"]&V_n\arrow[l]\\
	&&& L_k\arrow[ul,"\lambda_{k-1}"]\arrow[ur,swap,"\lambda_k"]&&&
	\end{tikzcd}\quad\quad 
	\]
	Similarly, if $(C_k,\gamma_j)$ is the colimit of diagram (\ref{subdiagram}), then we consider the new diagram
	\begin{equation} \label{intro_zigzag}
	\begin{tikzcd}[row sep=2em,column sep=3.5em]
	V_{k-1}\arrow[r,"\gamma_{k-1}"] &C_k &\arrow[l,swap,"\gamma_k"] V_{k+1}.
	\end{tikzcd}
	\end{equation}
	and define $\altmathcal{C}_k(\pv)\in\taumod{\zeta_k\tau}$ to be the zigzag module obtained by replacing the appearance of the subdiagram (\ref{subdiagram}) in $\pv$ by diagram (\ref{intro_zigzag}). Thus, we have a map
	\[\altmathcal{C}_k:\taumod{\tau}\to \taumod{\zeta_k\tau}\]
	for which $\pv$ and $\altmathcal{C}_k(\pv)$ are related by the diamond diagram
	\[\begin{tikzcd}[row sep=1.5em,column sep=2em]	\pv = \\
	\\ \altmathcal{C}_k(\pv) := 
	\end{tikzcd}
	\begin{tikzcd}[row sep=2em,column sep=3em]	&&&\arrow[dl] V_k&&&\\
	V_1\arrow[r,"p_1"]& \cdots \arrow[r,"p_{k-2}"]\arrow[l]& V_{k-1} \arrow[l]\arrow[ur,"p_{k-1}"]&  & V_{k+1}\arrow[r,"p_{k+1}"]\arrow[ul]\arrow[ul,swap,leftarrow,"p_k"]& \cdots\arrow[l]\arrow[r,"p_{n-1}"]&V_n\arrow[l]\\
	&&& C_k\arrow[ul,leftarrow,"\gamma_{k-1}"]\arrow[ur,swap,leftarrow,"\gamma_k"]&&&
	\end{tikzcd}\quad\quad\]
	In order to define reflections at indices $1$ and $n$, we consider the diagrams
	\[
	\begin{tikzcd}[row sep=2em,column sep=3.5em]
	0\arrow[r,"0"]&V_{1}\arrow[l]\arrow[r,"p_{1}"]& V_2\arrow[l]
	\end{tikzcd}
	\qquad \textup{ and } \qquad
	\begin{tikzcd}[row sep=2em,column sep=3.5em]
	V_{n-1}\arrow[r,"p_{n-1}"]& V_n\arrow[l]\arrow[r,"0"]& 0 \arrow[l]
	\end{tikzcd}
	\]
	and their limits or colimits. These reflections thus depend on the choice of direction for the zero map. We wish to allow the flexibility of choosing the direction of this map each time a reflection is applied at index $1$ or $n$. Thus, we define reflections $\altmathcal{L}_1^\rightarrow$ and $\altmathcal{L}_1^\leftarrow$ obtained by replacing $V_1$ and $p_1$ with the limit of the diagram
	\[	
	\begin{tikzcd}[row sep=2em,column sep=3.5em]
	0\arrow[r,"0"]&V_{1}\arrow[r,"p_{1}"]& V_2\arrow[l]
	\end{tikzcd}
	\qquad \textup{ or } \qquad
	\begin{tikzcd}[row sep=2em,column sep=3.5em]
	0&\arrow[l,swap,"0"]V_{1}\arrow[r,"p_{1}"]& V_2\arrow[l]
	\end{tikzcd}
	\]
	respectively. The maps $\altmathcal{L}_n^\rightarrow$, $\altmathcal{L}_n^\leftarrow$, 
	$\altmathcal{C}_1^\rightarrow$, 
	$\altmathcal{C}_1^\leftarrow$,   $\altmathcal{C}_n^\rightarrow$, and $\altmathcal{C}_n^\leftarrow$ are all defined analogously.
	
	For each $k\in \{2,\dots,n-1\}$, we let $\altmathcal{R}_k$ denote an unspecified choice of $\altmathcal{L}_k$ or $\altmathcal{C}_k$. Similarly $\altmathcal{R}_1$ denotes an unspecified choice of $\altmathcal{L}_1^\rightarrow$, $\altmathcal{L}_1^\leftarrow$, $\altmathcal{C}_1^\rightarrow$, or $\altmathcal{C}_1^\leftarrow$ and $\altmathcal{R}_n$ denotes an unspecified choice of $\altmathcal{L}_n^\rightarrow$, $\altmathcal{L}_n^\leftarrow$, $\altmathcal{C}_n^\rightarrow$, or $\altmathcal{C}_n^\leftarrow$.
	\subsection{Functoriality of Reflections}
	Let $\pv,\pw\in\taumod{\tau}$ and let $\phi:\pv\to\pw$ be a morphism of $\tau$-modules. Denote the limits of the diagrams $V_{k-1}\stackrel{p_{k-1}}{\longleftrightarrow}V_k\stackrel{p_k}{\longleftrightarrow} V_{k+1}$ and $W_{k-1}\stackrel{q_{k-1}}{\longleftrightarrow}W_k\stackrel{q_k}{\longleftrightarrow} W_{k+1}$ by $(L_k^V,\lambda_j^V)$ and $(L_k^W,\lambda_j^W)$, respectively. Then we have the following commutative diagram:
	\[\footnotesize
	\begin{tikzcd}[row sep=2.25em,column sep=4.25em]
	& L_k^V \arrow[ddl,swap,thick,black,"\lambda_1^V"] \arrow[ddr,thick,black,"\lambda_2^V"] \arrow[dd,gray,"\lambda_3^V"]
	\arrow[drr,dashed,thick,black,"\exists!\mu"] \\
	&&& L_W \arrow[ddr,thick,black,"\lambda_3^W"] \\
	V_{k-1} \arrow[drr,thick,black,swap,"\phi_{k-1}"] 
	\arrow[r,gray,"p_{k-1}"] &{\color{gray}V_k}\arrow[l,gray] \arrow[r,gray,"p_k"]\arrow[drr,gray,"\phi_k"]&
	V_{k+1}\arrow[l,gray] \arrow[drr,thick,black,"\phi_{k+1}"] \\
	&& W_{k-1}
	\arrow[r,swap,gray,"q_{k-1}"]
	\arrow[uur,leftarrow,crossing over,thick,black,"\lambda_1^W"]  & {\color{gray}W_k}\arrow[r,swap,gray,"q_{k}"]\arrow[l,gray]&
	W_{k+1}\arrow[l,gray]
	\end{tikzcd}\]
	From this diagram, we see that $(L_k^V,\gamma_j^V)$ is a cone over $W_{k-1}\stackrel{q_{k-1}}{\longleftrightarrow}W_k\stackrel{q_k}{\longleftrightarrow} W_{k+1}$ so that by the universality of $L_k^W$, there exists a unique linear transformation $\mu:L_k^V\to L_k^W$ making the diagram commute. We then define $\psi = \altmathcal{L}_k(\phi):\altmathcal{L}_k(\pv)\to\altmathcal{L}_k(\pw)$ by setting $\psi_j = \phi_j$ for all $j\neq k$ and $\phi_k := \mu$. The fact that $\psi$ is a well-defined morphism follows from the commutativity of the boldened portion of the diagram above. Appealing to duality, we similarly obtain a morphism $\altmathcal{C}_k(\phi):\altmathcal{C}_k(\pv)\to\altmathcal{C}_k(\pw)$.

	\begin{definition} For each $k\in\{1,\hdots,n\}$ and for each $\tau\in\altmathcal{T}_n$, the above definitions make $\altmathcal{L}_k$ into a functor from $\taumodcat{\tau}$ to $\taumodcat{\sigma_k\tau}$, which we call the \textit{extroversion reflection functor} at index $k$. Similarly, $\altmathcal{C}_k$ is a functor from $\taumodcat{\tau}$ to $\taumodcat{\zeta_k\tau}$, which we call the \textit{introversion reflection functor} at index $k$.
	\end{definition}
	
	\begin{remark}
		When a zigzag module $\pv$ has a \textit{sink} at index $k$, i.e., when $\pv$ has the form 
		\[	\pv = \begin{tikzcd}[row sep=2em,column sep=3.5em]
		\cdots \arrow[r,"p_{k-2}"]& V_{k-1} \arrow[l]\arrow[r,"p_{k-1}"]& V_{k}& V_{k+1}\arrow[r,"p_{k+1}"]\arrow[l,swap,"p_k"]& \cdots\arrow[l].
		\end{tikzcd}\]
		then, up to isomorphism, $\altmathcal{L}_k(\pv)$ is the same as the zigzag module obtained by applying the sink reflection functor of \cite{bgp} to $\pv$. Dually, when $\pv$ has as source at index $k$ then the zigzag module obtained by applying $\,\altmathcal{C}_k$ to $\pv$ is isomorphic to that obtained by applying the source reflection functor.
	\end{remark}

	\subsection{Properties of Reflection Functors}
	A number of the results below hold for categorical reasons, and we will rely on several high-level results for their proofs. Statements and proofs of these general categorical results are contained in Appendix \ref{app:cats}. We will denote by $\mathbf{vect}_{\mathbb{F}}$ the category of finite-dimensional vector spaces over the field $\mathbb{F}$.
	\begin{proposition}\label{refsub}
		Fix $k\in\{1,\hdots,n\}$, let $\tau\in \altmathcal{T}_n$, and let $\pv,\pw\in\taumod{\tau}$. If $\,\pw\leq \pv$ then $\altmathcal{R}_k(\pw)\leq \altmathcal{R}_k(\pv)$. Furthermore, if $\,\pv\cong\pw$ then $\altmathcal{R}_k(\pv)\cong\altmathcal{R}_k(\pw)$.
		\begin{proof} If $\pw\leq \pv$ then there exists a monomorphism $j:\pw\hookrightarrow \pv$. By functoriality of $\altmathcal{R}_k$ we obtain a morphism $\altmathcal{R}_k(j):\altmathcal{R}_k(\pw)\rightarrow \altmathcal{R}_k(\pv)$. Theorem \ref{mono_diagrams_mono_lim} of the appendix implies that all of the components of $\altmathcal{R}_k(j)$ are monomorphisms in $\mathbf{vect}_{\mathbb{F}}$, i.e., are injective. Thus $\altmathcal{R}_k(j)$ is a monomorphism from $\altmathcal{R}_k(\pw)$ to $\altmathcal{R}_k(\pv)$ so that $\altmathcal{R}_k(\pw)\leq\altmathcal{R}_k(\pv)$. The second statement follows from the fact that $\pv\cong\pw$ if and only if $\pw\leq\pv$ and $\pv\leq\pw$.
		\end{proof}
	\end{proposition}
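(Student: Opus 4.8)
The plan is to split the proposition into its two assertions and derive both from the single fact that the reflection functor $\altmathcal{R}_k$ carries monomorphisms of $\tau$-modules to monomorphisms; the submodule statement is then immediate and the isomorphism statement follows by applying it in both directions.

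I would start from the hypothesis $\pw\le\pv$, which by definition supplies a morphism $j\colon\pw\to\pv$ whose components $j_i\colon W_i\to V_i$ are all injective. Since $\altmathcal{R}_k$ is a functor — this was set up in Section~\ref{sect_ref_func}, where the component at index $k$ of $\altmathcal{R}_k(j)$ is produced from $j$ by the universal property of the limit/colimit — it returns a morphism $\altmathcal{R}_k(j)\colon\altmathcal{R}_k(\pw)\to\altmathcal{R}_k(\pv)$. Now I would inspect its components: at every index $i\ne k$ the component of $\altmathcal{R}_k(j)$ is literally $j_i$, hence injective; at index $k$ it is the unique map $\mu$ between the limit (if $\altmathcal{R}_k=\altmathcal{L}_k$) or colimit (if $\altmathcal{R}_k=\altmathcal{C}_k$) of the subdiagram~(\ref{subdiagram}) of $\pw$ and that of $\pv$, induced by the universal property, and the endpoint cases $k=1,n$ are the same with an extra zero vertex appended. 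So everything hinges on checking that $\mu$ is injective, for then $\altmathcal{R}_k(j)$ is a monomorphism and hence $\altmathcal{R}_k(\pw)\le\altmathcal{R}_k(\pv)$.

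Injectivity of $\mu$ is exactly what the categorical lemma of Appendix~\ref{app:cats} is for: a natural transformation between two diagrams in $\mathbf{vect}_\mathbb{F}$ of the fixed shape of~(\ref{subdiagram}) all of whose components are monomorphisms induces a monomorphism on the limit, and likewise on the colimit. For the extroversion functor $\altmathcal{L}_k$ this is transparent once one recalls that the limit sits inside the product of the three vertices and that a product of injections is an injection, so $\mu$ is just the restriction of $j_{k-1}\oplus j_k\oplus j_{k+1}$. The introversion functor $\altmathcal{C}_k$ is the delicate case, and I expect it to be the main obstacle: colimits do not in general carry monomorphisms to monomorphisms, so one genuinely has to exploit the abelian structure of $\mathbf{vect}_\mathbb{F}$ together with the particular shape of~(\ref{subdiagram}) to push the argument through — this is precisely the content the appendix is meant to isolate so that it is not re-derived here.

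For the second assertion I would not repeat the limit/colimit computation but quote the characterization from Section~\ref{sec:zigzag-mods}: $\pv\cong\pw$ if and only if both $\pw\le\pv$ and $\pv\le\pw$. Applying the first part of the proposition in each direction gives $\altmathcal{R}_k(\pw)\le\altmathcal{R}_k(\pv)$ and $\altmathcal{R}_k(\pv)\le\altmathcal{R}_k(\pw)$, and the same characterization then yields $\altmathcal{R}_k(\pv)\cong\altmathcal{R}_k(\pw)$. (One could alternatively note that any functor preserves isomorphisms and apply $\altmathcal{R}_k$ to an isomorphism $\phi\colon\pv\to\pw$ and to its inverse, but routing through $\le$ keeps the two halves of the proof parallel.)
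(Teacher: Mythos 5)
Your proposal follows the same skeleton as the paper's own proof: obtain the monomorphism $j\colon\pw\to\pv$, push it through $\altmathcal{R}_k$ by functoriality, argue that the components of $\altmathcal{R}_k(j)$ are injective (away from index $k$ because they are literally $j_i$, at index $k$ by a universal-property argument delegated to the appendix), and deduce the isomorphism statement by running the first part in both directions. So the approach is not genuinely different. What you add is a careful look at the index-$k$ component, and in doing so you put your finger on the real issue: you correctly observe that limits in $\mathbf{vect}_{\mathbb{F}}$ preserve monomorphisms (so the $\altmathcal{L}_k$ case is fine) but that colimits do not preserve monomorphisms in general.

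Here, though, is the gap: having noticed the colimit difficulty, you wave it away with ``this is precisely the content the appendix is meant to isolate.'' It is not. Theorem~\ref{mono_diagrams_mono_lim} as stated, and the reference it cites, concern limits only; the appendix contains no colimit counterpart, and the paper's own proof invokes the same theorem for $\altmathcal{R}_k=\altmathcal{C}_k$ without comment, which is the same leap. Worse, the statement you are trying to push through actually fails for $\altmathcal{C}_k$ with the $\leq$ relation as defined. Take $n=3$, $\tau=(\leftarrow,\rightarrow)$ (a source at index $2$), $\pv=\mathbb{I}_\tau([1,3])$, and $\pw=\mathbb{I}_\tau([1,1])\oplus\mathbb{I}_\tau([3,3])$. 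Then $\pw\leq\pv$ via the obvious monomorphism with middle component $0\hookrightarrow\mathbb{F}$. But $\altmathcal{C}_2(\pw)$ has colimit $\mathbb{F}\oplus\mathbb{F}$ at index $2$ (coproduct over the zero apex), while $\altmathcal{C}_2(\pv)$ has colimit $\mathbb{F}$ there (pushout of two identities), and there is no injection $\mathbb{F}^2\to\mathbb{F}$. So $\altmathcal{C}_2(\pw)\not\leq\altmathcal{C}_2(\pv)$. Your instinct that the colimit case is the delicate one is exactly right, but it cannot be discharged by citing an appendix lemma that does not exist, nor by appealing to abelianness, because the claim is simply false in this generality. (Note that the isomorphism half of the proposition is unaffected --- any functor preserves isomorphisms --- and it is only this half that the paper actually uses downstream, via Propositions~\ref{refsum} and~\ref{refsub}, so the main results survive; but the monomorphism half of Proposition~\ref{refsub} needs either to be restricted to $\altmathcal{L}_k$ or to be dropped.)
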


	\begin{proposition}\label{refsum} 	Fix $k\in\{1,\hdots,n\}$, let $\tau\in \altmathcal{T}_n$, and let $\pv,\pw\in\taumod{\tau}$. Then $\altmathcal{R}_k(\pv\oplus \pw) \cong \altmathcal{R}_k(\pv)\oplus\altmathcal{R}_k(\pw)$. This statement generalizes to the sum of any finite number of $\tau$-modules.
		\begin{proof}
			Viewing zigzag modules as diagrams in $\mathbf{vect}_{\mathbb{F}}$, the result follows from Theorem \ref{lim_prod_is_prod_lim}. The result extends to arbitrary finite sums by induction.
		\end{proof}
	\end{proposition}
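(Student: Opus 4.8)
The plan is to unwind the definition of $\altmathcal{R}_k$ and reduce the claim to the single categorical fact that, in $\mathbf{vect}_{\mathbb{F}}$, the limit (resp.\ colimit) of a direct sum of diagrams is canonically the direct sum of the limits (resp.\ colimits) — which is exactly the content of Theorem~\ref{lim_prod_is_prod_lim}. It suffices to treat the case of two summands $\pv = (V_i,p_i)$ and $\pw = (W_i,q_i)$; the general finite case then follows by an easy induction.

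First I would observe that $\altmathcal{R}_k$ alters a zigzag module only at index $k$: for every $i\neq k$ the space at index $i$ of $\altmathcal{R}_k(\pv)$ is $V_i$ itself, and every structure map not adjacent to $V_k$ is left unchanged. Since direct sums of zigzag modules are formed index-wise, at each $i\neq k$ the space of $\altmathcal{R}_k(\pv\oplus\pw)$ is $V_i\oplus W_i$, which already agrees (with the same structure maps) with that of $\altmathcal{R}_k(\pv)\oplus\altmathcal{R}_k(\pw)$. So the whole content of the proposition is the comparison at index $k$.

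Next, take $\altmathcal{R}_k = \altmathcal{L}_k$; the case $\altmathcal{R}_k = \altmathcal{C}_k$ is dual, and the boundary cases $k=1,n$ go through verbatim once a direction for the zero map is fixed (the same choice being used on both sides of the equation). By definition of the direct sum, the subdiagram $V_{k-1}\oplus W_{k-1} \leftrightarrow V_k\oplus W_k \leftrightarrow V_{k+1}\oplus W_{k+1}$ extracted from $\pv\oplus\pw$ at index $k$ is the index-wise direct sum of the subdiagrams extracted from $\pv$ and from $\pw$. Writing $(L_k^\pv,\lambda_j^\pv)$ and $(L_k^\pw,\lambda_j^\pw)$ for the limits of the latter two, Theorem~\ref{lim_prod_is_prod_lim} supplies a canonical isomorphism $L_k^{\pv\oplus\pw}\cong L_k^\pv\oplus L_k^\pw$ under which the cone legs of the sum diagram correspond to $\lambda_{k-1}^\pv\oplus\lambda_{k-1}^\pw$ and $\lambda_k^\pv\oplus\lambda_k^\pw$. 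Splicing this index-$k$ isomorphism together with the identities at all other indices gives the isomorphism of zigzag modules $\altmathcal{L}_k(\pv\oplus\pw)\cong \altmathcal{L}_k(\pv)\oplus\altmathcal{L}_k(\pw)$, and hence the proposition.

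The one place that needs care — the main obstacle — is the compatibility of the vector-space isomorphism $L_k^{\pv\oplus\pw}\cong L_k^\pv\oplus L_k^\pw$ with the adjacent structure maps $\lambda_{k-1},\lambda_k$, since an isomorphism of vector spaces alone does not yield an isomorphism of zigzag modules. This is precisely the naturality content of Theorem~\ref{lim_prod_is_prod_lim}: the projections and inclusions exhibiting $L_k^\pv\oplus L_k^\pw$ as a biproduct are morphisms of cones, so they commute with the limit legs. Equivalently, one may invoke the already-established functoriality of $\altmathcal{L}_k$ applied to the canonical maps $\pv\oplus\pw \rightleftarrows \pv$ and $\pv\oplus\pw\rightleftarrows\pw$ and verify that the induced maps on index-$k$ limits realize $\altmathcal{L}_k(\pv\oplus\pw)$ as a biproduct of $\altmathcal{L}_k(\pv)$ and $\altmathcal{L}_k(\pw)$ in $\taumodcat{\sigma_k\tau}$.
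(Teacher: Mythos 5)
Your proof is correct and takes essentially the same route as the paper's: both reduce the claim to Theorem~\ref{lim_prod_is_prod_lim} (commutativity of limits with products in $\mathbf{vect}_{\mathbb{F}}$), with your version simply making explicit the reduction to the index-$k$ subdiagram and the compatibility of the induced isomorphism with the cone legs, which the paper's one-line proof leaves to the reader.
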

	
	\begin{corollary}\label{refsummand}
		Fix $k\in\{1,\hdots,n\}$, let $\tau\in \altmathcal{T}_n$, and let $\pv$, $\pw\in\taumod{\tau}$. If $\,\pw\preceq \pv$ then $\altmathcal{R}_k(\pw)\preceq\altmathcal{R}_k(\pv)$.
		\begin{proof}If $\pw\preceq\pv$ then $\pv \cong \pw\oplus\pu$ for some $\pu\in\taumod{\tau}$ so that by Propositions \ref{refsub} and \ref{refsum}, $$\altmathcal{R}_{k}(\pv)\cong \altmathcal{R}_k(\pw\oplus\pu) \cong \altmathcal{R}_k(\pw)\oplus\altmathcal{R}_k(\pu)$$ and hence $\altmathcal{R}_k(\pw)\preceq\altmathcal{R}_k(\pv)$.
		\end{proof}
	\end{corollary}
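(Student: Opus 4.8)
The plan is to derive the statement formally from the two preceding results about reflection functors. By the definition of the summand relation, $\pw\preceq\pv$ means there is a $\tau$-module $\pu$ with $\pv\cong\pw\oplus\pu$. The strategy is then: (i) push this isomorphism through $\altmathcal{R}_k$; (ii) distribute $\altmathcal{R}_k$ over the direct sum; (iii) read off the summand relation on the images.

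For step (i), I would invoke the second assertion of Proposition~\ref{refsub}: since $\altmathcal{R}_k$ is a functor (one of $\altmathcal{L}_k$ or $\altmathcal{C}_k$, or one of their endpoint variants at $k=1,n$), it sends the isomorphism $\pv\cong\pw\oplus\pu$ to an isomorphism $\altmathcal{R}_k(\pv)\cong\altmathcal{R}_k(\pw\oplus\pu)$. For step (ii), Proposition~\ref{refsum} gives $\altmathcal{R}_k(\pw\oplus\pu)\cong\altmathcal{R}_k(\pw)\oplus\altmathcal{R}_k(\pu)$. Chaining these isomorphisms yields $\altmathcal{R}_k(\pv)\cong\altmathcal{R}_k(\pw)\oplus\altmathcal{R}_k(\pu)$. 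Since $\altmathcal{R}_k(\pu)$ is itself a module of the appropriate type ($\sigma_k\tau$ or $\zeta_k\tau$), this exhibits $\altmathcal{R}_k(\pw)$ as a summand of $\altmathcal{R}_k(\pv)$, i.e. $\altmathcal{R}_k(\pw)\preceq\altmathcal{R}_k(\pv)$, which is step (iii).

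I do not expect any genuine obstacle; the corollary is a bookkeeping consequence of functoriality together with the two propositions just proved. The only point deserving a word of care is that $\altmathcal{R}_k$ is a placeholder for several concrete functors, so one should note that Propositions~\ref{refsub} and~\ref{refsum} were established uniformly for all of these choices; after that remark the three-line argument above goes through unchanged. (At the level of reflection functors this also parallels the arrow-reversal statement of Proposition~\ref{arrow_reversal_properties}(3).)
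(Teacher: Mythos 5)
Your proof is correct and matches the paper's argument exactly: decompose $\pv\cong\pw\oplus\pu$, apply Propositions~\ref{refsub} and~\ref{refsum} to obtain $\altmathcal{R}_k(\pv)\cong\altmathcal{R}_k(\pw)\oplus\altmathcal{R}_k(\pu)$, and read off the summand relation.
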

	
	\begin{corollary}\label{refprecsim}
		Fix $k\in\{1,\hdots,n\}$, let $\tau\in \altmathcal{T}_n$, and let $\pv$, $\pw\in\nmod$. If $\,\pw\precsim \pv$ then $\altmathcal{R}_k(\pw)\precsim\altmathcal{R}_k(\pv)$.
		\begin{proof}
			If $\pw\precsim \pv$ then $\pw\sim\pw'\preceq \pv$ for some $\pw'$. Now note that that if $\pw\sim \pw'$ then $\altmathcal{R}_k(\pw)\sim \altmathcal{R}_k(\pw')$. This follows from the fact that limits and colimits of diagrams are unaltered if any number of arrows representing isomorphisms are reversed. From this fact and Corollary \ref{refsummand}, we have \[\altmathcal{R}_k(\pw)\sim\altmathcal{R}_k(\pw')\preceq \altmathcal{R}_k(\pv),\]
			and hence $\altmathcal{R}_k(\pw)\precsim\altmathcal{R}_k(\pv)$.
		\end{proof}
	\end{corollary}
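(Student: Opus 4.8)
The strategy is to peel off the definition of $\precsim$ and reduce to two facts already in hand: that $\altmathcal{R}_k$ preserves summands (Corollary~\ref{refsummand}) and that $\altmathcal{R}_k$ respects the equivalence relation $\sim$. So suppose $\pw\precsim\pv$. By definition there is a zigzag module $\pw'\in\nmod$ with $\pw\sim\pw'$ and $\pw'\preceq\pv$. Fixing one and the same choice of reflection throughout (the same $\altmathcal{L}_k$ versus $\altmathcal{C}_k$, and at the boundary indices $1$ and $n$ the same direction for the auxiliary zero map), Corollary~\ref{refsummand} applied to $\pw'\preceq\pv$ gives $\altmathcal{R}_k(\pw')\preceq\altmathcal{R}_k(\pv)$. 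Granting for the moment that $\altmathcal{R}_k(\pw)\sim\altmathcal{R}_k(\pw')$, we then have
\[\altmathcal{R}_k(\pw)\sim\altmathcal{R}_k(\pw')\preceq\altmathcal{R}_k(\pv),\]
which is precisely a witness that $\altmathcal{R}_k(\pw)\precsim\altmathcal{R}_k(\pv)$. So the entire content is the compatibility of $\altmathcal{R}_k$ with $\sim$.

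To prove that $\pw\sim\pw'$ implies $\altmathcal{R}_k(\pw)\sim\altmathcal{R}_k(\pw')$, I would first reduce to a single arrow reversal: writing $\pw'\cong\altmathcal{A}_{k_j}\cdots\altmathcal{A}_{k_1}(\pw)$ and setting $\pw_0=\pw$, $\pw_m=\altmathcal{A}_{k_m}(\pw_{m-1})$, it suffices by transitivity of $\sim$ — together with Proposition~\ref{refsub} for the final link $\altmathcal{R}_k(\pw_j)\cong\altmathcal{R}_k(\pw')$ — to treat each step $\altmathcal{R}_k(\pw_{m-1})\sim\altmathcal{R}_k(\pw_m)$. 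Thus we may assume $\pw'=\altmathcal{A}_i(\pw)$ for a single index $i$ at which the $i^\textup{th}$ structure map of $\pw$ is an isomorphism. Now split on whether $p_i$ lies in the three-term subdiagram that $\altmathcal{R}_k$ collapses to a (co)limit: for $k\in\{2,\dots,n-1\}$ that subdiagram uses only $p_{k-1}$ and $p_k$, for $\altmathcal{R}_1$ only $p_1$, and for $\altmathcal{R}_n$ only $p_{n-1}$. When $i$ is not one of these indices, $\altmathcal{A}_i$ and $\altmathcal{R}_k$ modify disjoint parts of the module, the $i^\textup{th}$ structure map of $\altmathcal{R}_k(\pw)$ is still the isomorphism $p_i$, and one checks directly that $\altmathcal{R}_k(\altmathcal{A}_i(\pw))=\altmathcal{A}_i(\altmathcal{R}_k(\pw))$; in particular $\altmathcal{R}_k(\pw)\sim\altmathcal{R}_k(\pw')$.

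The remaining case — where $p_i$ is one of the structure maps inside the collapsed subdiagram — is the only place where something genuinely has to be verified, and is the step I expect to be the main obstacle. Here I would invoke the canonical commutative square of Remarks~\ref{arr_rev_remarks}(1): reversing the isomorphism $p_i$ exhibits an isomorphism between the original three-term subdiagram and the reversed one that is the identity on the two outer vertices $V_{k-1}$ and $V_{k+1}$ (or their boundary analogues). Since forming a limit (for $\altmathcal{L}_k$) or a colimit (for $\altmathcal{C}_k$) carries isomorphic diagrams to isomorphic objects and matches up the associated universal cones or cocones — a routine categorical fact, cf.\ Appendix~\ref{app:cats} — the replacement diagram \eqref{extro_zigzag} (resp.\ \eqref{intro_zigzag}) built from $\pw'$ is isomorphic to the one built from $\pw$ via a map that is the identity away from index $k$. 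Moreover the type maps $\sigma_k$ and $\zeta_k$ overwrite exactly the entries of the type that a reversal at such an index $i$ affects, so $\altmathcal{R}_k(\pw)$ and $\altmathcal{R}_k(\pw')$ lie in the same category $\taumodcat{\sigma_k\tau}$ (resp.\ $\taumodcat{\zeta_k\tau}$) and are in fact isomorphic, hence certainly $\sim$-equivalent. Combining the two cases with the reduction establishes $\altmathcal{R}_k(\pw)\sim\altmathcal{R}_k(\pw')$, and then the displayed chain completes the argument.
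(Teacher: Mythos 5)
Your proof follows the same skeleton as the paper's: peel off the definition of $\precsim$, apply Corollary~\ref{refsummand} to the summand relation, and reduce everything to the compatibility of $\altmathcal{R}_k$ with $\sim$. The only difference is that where the paper dispatches that compatibility in one sentence (``limits and colimits of diagrams are unaltered if any number of arrows representing isomorphisms are reversed''), you actually prove it — reducing to a single reversal, splitting on whether the reversed arrow lies inside the three-term window that $\altmathcal{R}_k$ collapses, handling the disjoint case by commutation of $\altmathcal{A}_i$ and $\altmathcal{R}_k$, and the overlapping case by the observation that reversing an isomorphism inside the window yields an isomorphic diagram (Remarks~\ref{arr_rev_remarks}(1)) whose (co)limit is therefore isomorphic, together with the type bookkeeping that $\sigma_k$ and $\zeta_k$ overwrite exactly the entries a reversal at such an index changes. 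This is a genuine expansion of an asserted step rather than a different route, and it is correct; if anything it is more careful than the published proof, which leans on the reader to believe the key fact.
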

	
	\begin{theorem}\label{bigsumreflection}
		Fix $k\in\{1,\hdots,n\}$, let $\tau\in \altmathcal{T}_n$, and let $\pv\in\taumod{\tau}$. Then we have
		\[\altmathcal{R}_k(\pv) \cong \bigoplus_{(b,d)\in\dgm(\pv)}\altmathcal{R}_k(\mathbb{I}_\tau([b,d])).\]
		\begin{proof}
			Write $\pv \cong \bigoplus_{(b,d)\in\dgm(\pv)}\mathbb{I}_\tau([b,d])$. Using Propositions \ref{refsub} and \ref{refsum}, we have
			\[\altmathcal{R}_k(\pv) \cong \altmathcal{R}_k\left(\bigoplus_{(b,d)\in\dgm(\pv)}\mathbb{I}_\tau([b,d])\right)
			\cong \bigoplus_{(b,d)\in\dgm(\pv)}\altmathcal{R}_k(\mathbb{I}_\tau([b,d])).\qedhere\]
		\end{proof}
	\end{theorem}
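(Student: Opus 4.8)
The plan is to reduce everything to the two structural facts already established about reflection functors: that they respect isomorphisms and that they commute with finite direct sums.

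First I would invoke the Krull--Remak--Schmidt/Gabriel theorem (Theorem \ref{krull}), which by the very definition of the persistence diagram supplies the decomposition (\ref{persdecomp}), namely $\pv \cong \bigoplus_{(b,d)\in\dgm(\pv)}\mathbb{I}_\tau([b,d])$. Since all vector spaces in play are finite dimensional, $\dgm(\pv)$ is a finite multiset, so this is genuinely a finite direct sum. Next, applying $\altmathcal{R}_k$ to both sides and using the ``furthermore'' clause of Proposition \ref{refsub} (reflection functors send isomorphic modules to isomorphic modules), I would obtain $\altmathcal{R}_k(\pv)\cong \altmathcal{R}_k\big(\bigoplus_{(b,d)\in\dgm(\pv)}\mathbb{I}_\tau([b,d])\big)$. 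Then I would apply Proposition \ref{refsum}, together with its stated extension to arbitrary finite sums by induction, to move $\altmathcal{R}_k$ past the direct sum, giving $\altmathcal{R}_k\big(\bigoplus_{(b,d)\in\dgm(\pv)}\mathbb{I}_\tau([b,d])\big)\cong \bigoplus_{(b,d)\in\dgm(\pv)}\altmathcal{R}_k(\mathbb{I}_\tau([b,d]))$. Concatenating the two isomorphisms yields the assertion.

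There is no real obstacle here: the substantive categorical work — that limits and colimits commute with finite products and preserve monomorphisms — has already been absorbed into Propositions \ref{refsub} and \ref{refsum}. The only points worth a moment's care are bookkeeping ones: the index set is a \emph{multiset}, so an interval occurring with multiplicity $m$ contributes a summand isomorphic to $\mathbb{I}_\tau([b,d])$ repeated $m$ times, which is handled uniformly by the finite-sum version of Proposition \ref{refsum}, and the induction in that proposition terminates precisely because $\dgm(\pv)$ is finite.
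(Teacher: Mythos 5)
Your proof is correct and follows exactly the paper's argument: decompose $\pv$ via Theorem \ref{krull}, then invoke Proposition \ref{refsub} to preserve the isomorphism and Proposition \ref{refsum} to pass $\altmathcal{R}_k$ through the finite direct sum. The extra remarks about multiset multiplicity and finiteness are sensible but are already absorbed into the finite-sum version of Proposition \ref{refsum}, so there is no substantive difference from the paper's proof.
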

	
	Theorem \ref{bigsumreflection} together with the following theorems describe exactly how reflections of zigzag modules effect their persistence diagrams:
	\begin{theorem}[The Diamond Principle Part I, \cite{bgp},\cite{zigzag},\cite{kalisnik_thesis}]\label{intervalreflections}
		Let $k\in\{2,\hdots,n-1\}$ and let $\tau\in\altmathcal{T}_n$ have a sink at index $k$ so that $\sigma_k\tau$ has a source at index $k$. Then the reflection functors $\altmathcal{L}_k$ and $\altmathcal{C}_k$ induce mutually inverse bijections between the isomorphism classes of interval $\tau$-modules and the isomorphism classes of interval $\sigma_k\tau$-modules, with the exception of the simple interval modules $\mathbb{I}_{\tau}([k,k])$ and $\mathbb{I}_{\sigma_k\tau}([k,k])$ which are annihilated by these functors.\footnote{We warn the reader that while we are referring to isomorphism classes of interval $\tau$ modules, in order to keep our notation simple, we will use the notation $\mathbb{I}_\tau([b,d])$ instead  of the more correct notation $[\mathbb{I}_\tau([b,d])]$. } These functors act on the (isomorphism classes of) interval modules as follows:
		
		\begin{equation*}
		\begin{aligned}
		\mathbb{I}_\tau([k,k])\, &&\longrightarrow & \quad \mathdszero &&\\
		\mathdszero && \longleftarrow \,\;& \quad \mathbb{I}_{\sigma_k\tau}([k,k])&&\\
		\mathbb{I}_\tau([b,k-1])\, &&\longleftrightarrow & \quad\mathbb{I}_{\sigma_k\tau}([b,k]) && \quad \text{for }b\leq k-1\\
		\mathbb{I}_\tau([b,k])\, &&\longleftrightarrow & \quad\mathbb{I}_{\sigma_k\tau}([b,k-1]) && \quad \text{for }b\leq k-1\\
		\mathbb{I}_\tau([k+1,d])\, &&\longleftrightarrow & \quad\mathbb{I}_{\sigma_k\tau}([k,d]) && \quad \text{for }d\geq k+1\\
		\mathbb{I}_\tau([k,d])\, &&\longleftrightarrow & \quad\mathbb{I}_{\sigma_k\tau}([k+1,d]) && \quad \text{for }d\geq k+1\\
		\mathbb{I}_\tau([b,d])\, &&\longleftrightarrow & \quad\mathbb{I}_{\sigma_k\tau}([b,d]) && \quad \text{otherwise}.\\	
		\end{aligned}
		\end{equation*}
		For $k = 1$, we have that $\altmathcal{L}_1^\rightarrow$ and $\altmathcal{C}_1^\leftarrow$ are mutually inverse except on the simple interval modules $\mathbb{I}_\tau([1,1])$ and $\mathbb{I}_{\sigma_1\tau}([1,1])$:
		\begin{equation*}
		\begin{aligned}
		\mathbb{I}_\tau([1,1])\, &&{\longrightarrow} & \quad \mathdszero &&\\
		\mathdszero && \longleftarrow \,\; & \quad \mathbb{I}_{\sigma_1\tau}([1,1])&&\\
		\mathbb{I}_\tau([1,d])\, &&\longleftrightarrow & \quad\mathbb{I}_{\sigma_1\tau}([2,d]) && \quad \text{for }d\geq 2\\
		\mathbb{I}_\tau([2,d])\, &&\longleftrightarrow & \quad\mathbb{I}_{\sigma_1\tau}([1,d]) && \quad \text{for }d\geq 2\\
		\end{aligned}
		\end{equation*}
		and for $k = n$, we have that $\altmathcal{L}_n^\leftarrow$ and $\altmathcal{C}_n^\rightarrow$ are mutually inverse except on the simple interval modules $\mathbb{I}_\tau([n,n])$ and $\mathbb{I}_{\sigma_1\tau}([n,n])$:
		\begin{equation*}
		\begin{aligned}
		\mathbb{I}_\tau([n,n]) &&{\longrightarrow} & \quad \mathdszero &&\\
		\mathdszero && \longleftarrow \,\; & \quad  \mathbb{I}_{\sigma_n\tau}([n,n])&&\\
		\mathbb{I}_\tau([b,n])\, &&\longleftrightarrow & \quad\mathbb{I}_{\sigma_n\tau}([b,n-1]) && \quad \text{for }b\leq n-1.\\
		\mathbb{I}_\tau([b,n-1])\, &&\longleftrightarrow & \quad\mathbb{I}_{\sigma_n\tau}([b,n]) && \quad \text{for }b\leq n-1.\\
		\end{aligned}
		\end{equation*}
		In the above, arrows pointing from left to right denote the effect of applying $\altmathcal{L}_k$ (or $\altmathcal{L}_k^{\leftarrow}, \altmathcal{L}_k^{\rightarrow}$) while arrow pointing from right to left denote the effect of applying $\altmathcal{C}_k$ (or $\altmathcal{C}_k^{\leftarrow}, \altmathcal{C}_k^{\rightarrow}$).		
	\end{theorem}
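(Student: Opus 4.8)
The plan is purely computational: I would directly work out the action of $\altmathcal{L}_k$ and $\altmathcal{C}_k$ on each interval module by computing the relevant pullback or pushout in $\mathbf{vect}_{\mathbb{F}}$, then read off the resulting interval and compare with the table.

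First I would treat an index $k\in\{2,\dots,n-1\}$ at which $\tau$ has a sink, so that the isolated subdiagram~\eqref{subdiagram} of a module is a genuine cospan $V_{k-1}\to V_k\leftarrow V_{k+1}$. For the interval module $\mathbb{I}_\tau([b,d])$ the three vertices of this cospan are each $\mathbb{F}$ or $0$ according to membership in $[b,d]$, and the two maps are $\mathrm{id}_{\mathbb{F}}$ or $0$; the functor $\altmathcal{L}_k$ replaces the cospan by the span obtained from its pullback, leaving all other data untouched. There are essentially five cases, distinguished by how $[b,d]$ sits relative to $\{k-1,k,k+1\}$: $[b,d]$ misses the sink region entirely, so the pullback is $0$ and the module is returned unchanged, giving output $\mathbb{I}_{\sigma_k\tau}([b,d])$; $[b,d]$ ends at $k-1$ or begins at $k+1$, so the pullback is $\mathbb{F}$ with exactly one projection an isomorphism, giving output $\mathbb{I}_{\sigma_k\tau}([b,k])$, resp.\ $\mathbb{I}_{\sigma_k\tau}([k,d])$; $[b,d]$ reaches $k$ from the left only (i.e.\ $d=k$ and $b\le k-1$) or symmetrically from the right only, so the pullback of $\mathbb{F}\xrightarrow{\mathrm{id}}\mathbb{F}\xleftarrow{0}0$ is $0$, giving output $\mathbb{I}_{\sigma_k\tau}([b,k-1])$, resp.\ $\mathbb{I}_{\sigma_k\tau}([k+1,d])$; $[b,d]$ properly contains $k$, so the pullback is $\mathbb{F}$ with both projections isomorphisms and the module is unchanged, giving output $\mathbb{I}_{\sigma_k\tau}([b,d])$; and $[b,d]=[k,k]$, so the pullback of $0\to\mathbb{F}\leftarrow 0$ is $0$ and the module is annihilated. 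Reading off the indices at which the reflected module is nonzero in each case reproduces the first displayed table.

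Next I would record the dual computation for $\altmathcal{C}_k$: since $\sigma_k\tau$ has a source at $k$, the isolated subdiagram of an interval $\sigma_k\tau$-module is a span, $\altmathcal{C}_k$ replaces it by the cospan built from its pushout, and the same five cases — with the roles of $\mathbb{F}$ versus $0$ and of all arrows dualized — yield exactly the reversed assignments. Comparing the two tables entry by entry then shows that $\altmathcal{L}_k$ and $\altmathcal{C}_k$ restrict to mutually inverse bijections between the isomorphism classes of interval $\tau$-modules and interval $\sigma_k\tau$-modules other than $\mathbb{I}_\tau([k,k])$ and $\mathbb{I}_{\sigma_k\tau}([k,k])$, which are precisely the objects sent to the zero module. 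The endpoint cases $k=1$ and $k=n$ are handled the same way after prepending, resp.\ appending, the zero module in the prescribed direction, as in the definition of $\altmathcal{L}_1^\rightarrow$: now only the indices $1,2$ (resp.\ $n-1,n$) are affected, so the interval modules that change are just $\mathbb{I}_\tau([1,1])$, $\mathbb{I}_\tau([1,d])$ and $\mathbb{I}_\tau([2,d])$ (resp.\ their right-hand analogues), and the correspondingly shorter list of pullback/pushout computations produces the last two tables.

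I do not expect a conceptual obstacle here: every limit and colimit in sight is taken over a diagram assembled from copies of $\mathbb{F}$, the zero space, identity maps, and zero maps, so each case is a one-line linear-algebra check. The only real work is organizational — making sure every position of $[b,d]$ relative to $\{k-1,k,k+1\}$ is counted exactly once, keeping straight which structure arrow $\sigma_k$ reverses, and getting the orientation of the padding zero map right for $k=1$ and $k=n$. Throughout, the fact that a zigzag module's persistence diagram depends only on its isomorphism class (Theorem~\ref{krull}) is what makes it legitimate to compute the relevant limits and colimits only up to isomorphism.
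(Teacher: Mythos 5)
Your case analysis is correct and exhaustive. Enumerating the position of $[b,d]$ relative to $\{k-1,k,k+1\}$ and computing the limit of the cospan $V_{k-1}\to V_k\leftarrow V_{k+1}$ in each case does recover the first table: the limit of $\mathbb{F}\xrightarrow{\textup{id}}\mathbb{F}\xleftarrow{0}0$ is $0$, of $\mathbb{F}\to 0\leftarrow 0$ is $\mathbb{F}$ with the projection to $V_{k-1}$ an isomorphism, of $0\to\mathbb{F}\leftarrow 0$ is $0$, and of $\mathbb{F}\xrightarrow{\textup{id}}\mathbb{F}\xleftarrow{\textup{id}}\mathbb{F}$ is $\mathbb{F}$ with both projections isomorphisms. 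The dual pushout computations for $\altmathcal{C}_k$ and the zero-padded cases for $k\in\{1,n\}$ go through identically, and reading the resulting two tables against each other verifies mutual inverseness on isomorphism classes. For context: the paper does not actually supply a proof of Theorem \ref{intervalreflections}; it cites \cite{bgp}, \cite{zigzag}, and \cite{kalisnik_thesis}. But the one-line proof it gives for the companion Theorem \ref{flow_reflection} (``a straightforward verification of the universal properties of limits and colimits'') is precisely the argument you have written out, so your route is the expected one. One small correction to your closing remark: the justification for working only up to isomorphism is not Theorem \ref{krull}. It is that limits and colimits are unique up to canonical isomorphism, that $\altmathcal{R}_k$ preserves isomorphism because it is a functor (cf.\ Proposition \ref{refsub}), and that a module which is $\mathbb{F}$ on a contiguous set of indices with identity structure maps there is, by definition, an interval module. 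Krull--Remak--Schmidt enters only afterwards, in Theorem \ref{bigsumreflection}, to pass from interval modules to arbitrary modules.
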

	A statement completely analogous to Theorem \ref{intervalreflections} holds in the case that $\tau$ has a source at index $k$, with the roles of $\altmathcal{L}_k$ and $\altmathcal{C}_k$ reversed, $\altmathcal{L}_1^\rightarrow$, $\altmathcal{C}_1^\leftarrow$, $\altmathcal{L}_n^\leftarrow$, $\altmathcal{C}_n^\rightarrow$ replaced with $\altmathcal{C}_1^\leftarrow$, $\altmathcal{L}_1^\rightarrow$, $\altmathcal{C}_n^\rightarrow$, $\altmathcal{L}_n^\leftarrow$ respectively, and with $\sigma$ begin replaced with $\zeta$, but with the actions on interval modules being otherwise identical.
	
	\begin{figure}
		\centering
		\scalebox{1.5}{
			\begin{tikzpicture}
			\draw[-latex,very thick] (1,0.5)--(3.5,0.5);\draw[-latex,very thick] (0.5,1)--(0.5,3.5);
			
			\draw[dashed,very thick] (0.5,0.5)--(1,0.5);
			\draw[dashed,very thick] (0.5,0.5)--(0.5,1);
			\draw[very thick] (0.479,0.5)--(0.5,0.5);
			
			\fill[gray]  (1.5,2) circle [radius=1.75pt];
			\fill[gray]  (1,2.5) circle [radius=1.75pt];
			\fill[gray]  (1.5,2) circle [radius=1.75pt];\fill[gray]  (2,2) circle [radius=1.75pt];
			\fill[gray]  (1,1.5) circle [radius=1.75pt];\fill[gray]  (1,2) circle [radius=1.75pt];
			\fill[gray]  (1.5,1.5) circle [radius=1.75pt];\fill[gray]  (1,1) circle [radius=1.75pt];
			\fill[gray]  (2.5,3) circle [radius=1.75pt];\fill[gray]  (2,3) circle [radius=1.75pt];
			\fill[gray]  (1.5,3) circle [radius=1.75pt];\fill[gray]  (1,3) circle [radius=1.75pt];
			\fill[gray] (2.5,2.5) circle [radius=1.75pt];\fill[gray] (2,2.5) circle [radius=1.75pt];
			\fill[gray]  (1.5,2.5) circle [radius=1.75pt];\fill[gray]  (3,3) circle [radius=1.75pt];
			
			\tikzset{cross/.style={cross out, draw=red, fill=none, minimum size=3, inner sep=0pt, outer sep=0pt}, cross/.default={2pt}}
			
			\node[draw,cross out,red,inner sep = 1pt,minimum size = 4.5pt] at (2,2){};
			
			\draw[-latex,semithick] (1.5,1.8)--(1.5,1.5);
			\draw[-latex,semithick] (1.5,1.7)--(1.5,2);
			
			\draw[-latex,semithick] (1.0,1.8)--(1.0,1.5);
			\draw[-latex,semithick] (1.0,1.7)--(1.0,2);
			
			\draw[-latex,semithick] (2.2,2.5)--(2.5,2.5);
			\draw[-latex,semithick] (2.3,2.5)--(2,2.5);
			
			\draw[-latex,semithick] (2.2,3.0)--(2.5,3.0);
			\draw[-latex,semithick] (2.3,3.0)--(2,3.0);
			
			\footnotesize
			\draw[very thin] (2,0.55) -- (2,0.45)node[anchor=north ] {$k$};
			\draw[very thin] (2.5,0.55) -- (2.5,0.45);
			\draw[very thin] (3,0.45) -- (3,0.55);
			
			\draw[very thin] (1,0.55) -- (1,0.45);
			\draw[very thin] (1.5,0.55) -- (1.5,0.45);
			
			\footnotesize
			\draw[very thin] (.55,2) -- (.45,2)node[anchor=east ] {$k$};
			\draw[very thin] (.55,2.5) -- (.45,2.5);
			\draw[very thin] (.55,3) -- (0.45,3);
			\draw[very thin] (.55,1) -- (0.45,1);
			\draw[very thin] (.55,1.5) -- (.45,1.5);
			\end{tikzpicture}\quad
			
			\begin{tikzpicture}
			
			\draw[-latex,very thick] (1,0.5)--(3.5,0.5);\draw[very thick] (0.5,0.5)--(0.5,1.5);
			
			\draw[dashed,very thick] (0.5,0.5)--(1,0.5);
			
			\draw[very thick,black] (1,1.25)--(3,1.25);
			\draw[very thick,orange] (1,1.0)--(2,1.0);
			\draw[very thick,red] (2,.75)--(3,.75);
			
			\footnotesize
			\draw[very thin] (2,0.55) -- (2,0.45)node[anchor=north ] {$k$};
			\draw[very thin] (2.5,0.55) -- (2.5,0.45);
			\draw[very thin] (3,0.45) -- (3,0.55);
			\draw[very thin] (1,0.55) -- (1,0.45);
			\draw[very thin] (1.5,0.55) -- (1.5,0.45);
			
			\draw[-latex,very thick] (1,-1.5)--(3.5,-1.5);\draw[very thick] (0.5,-1.5)--(0.5,-0.5);
			
			\draw[dashed,very thick] (0.5,-1.5)--(1,-1.5);
			
			\draw[very thick,black] (1,-0.75)--(3,-0.75);
			\draw[very thick,orange] (1,-1)--(1.5,-1);
			\draw[very thick,red] (2.5,-1.25)--(3,-1.25);
			
			\footnotesize
			\draw[very thin] (2,-1.45) -- (2,-1.55)node[anchor=north ] {$k$};
			\draw[very thin] (2.5,-1.45) -- (2.5,-1.55);
			\draw[very thin] (3,-1.55) -- (3,-1.45);
			\draw[very thin] (1,-1.45) -- (1,-1.55);
			\draw[very thin] (1.5,-1.45) -- (1.5,-1.55);
			\normalsize
			
			\node[anchor = west] at (1.73,-.25) {$\Updownarrow$ \scriptsize$\altmathcal{R}_k$};
			
			\end{tikzpicture}\quad}
		\caption{Left: Points in the diagram of a zigzag module $\pv$ move according to the arrows when the reflection functors $\altmathcal{L}_k$ or $\altmathcal{C}_k$ are applied to diagrams with sinks or sources, respectively, at index $k$. The point $(k,k)$ corresponding to the simple summand $\mathbb{I}([k,k])$ is killed. Right: An analogous picture for barcodes.}
		\label{reflection_diagram}
	\end{figure}
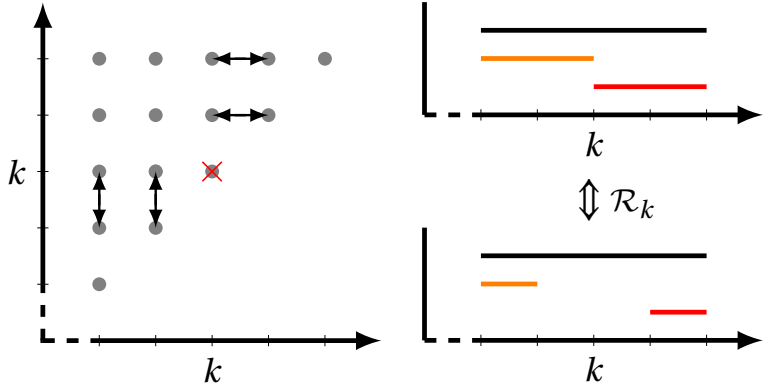

	\begin{example}
		Let $n = 4$, let $\tau = (\rightarrow,\leftarrow,\rightarrow)\in\altmathcal{T}_n$, and let $\pv$ be the $\tau$-module 
		\[\pv = \mathbb{I}_\tau([1,4])\oplus\mathbb{I}_\tau([1,2])\oplus\mathbb{I}_\tau([2,3])\oplus\mathbb{I}_\tau([2,3])\oplus\mathbb{I}_\tau([3,3])\]
		Evidently, $\tau$ has a sink at index $2$. Applying $\altmathcal{L}_2$ and using Theorems \ref{bigsumreflection} and \ref{intervalreflections}, we have
		\[\altmathcal{L}_2(\pv) \cong \mathbb{I}_{\sigma_2\tau}([1,4])\oplus\mathbb{I}_{\sigma_2\tau}([1,1])\oplus \mathbb{I}_{\sigma_2\tau}([3,3])^3.\]
		Similarly, we see that $\tau$ has a source at index $3$, and
		\[\altmathcal{C}_3(\pv) \cong \mathbb{I}_{\zeta_3\tau}([1,4])\oplus\mathbb{I}_{\zeta_3\tau}([1,2])\oplus \mathbb{I}_{\zeta_3\tau}([2,2])^2.\]
	\end{example}
	
	The following theorem is the analogue of Theorem \ref{intervalreflections} for interval modules having a flow, i.e. neither a sink or a source, at the index to which a reflection functor is applied:
	\begin{theorem}[The Diamond Principle Part II]\label{flow_reflection}
		Let $k\in\{2,\dots,n-1\}$ and let $\tau\in\altmathcal{T}_n$ have a forward flow at index $k$.
		\begin{enumerate}[(1.)]
			
			\item 
			The extroversion functor $\altmathcal{L}_k$ acts as follows on the interval $\tau$-modules:	
			\begin{equation*}
			\begin{aligned}
			\mathbb{I}_\tau([k,k])\, &&\stackrel{\altmathcal{L}_k }{\longrightarrow} & \quad \mathdszero &\\
			\mathbb{I}_\tau([b,k-1])\, &&\longrightarrow & \quad\mathbb{I}_{\sigma_k\tau}([b,k]) && \quad \text{for }b\leq k-1\\
			\mathbb{I}_\tau([k,d])\, &&\longrightarrow & \quad\mathbb{I}_{\sigma_k\tau}([k+1,d]) && \quad \text{for }d\geq k+1\\
			\mathbb{I}_\tau([b,d])\, &&\longrightarrow & \quad\mathbb{I}_{\sigma_k\tau}([b,d]) && \quad \text{otherwise}.\\
			\end{aligned}
			\end{equation*}

			\item
			The introversion functor $\altmathcal{C}_k$ acts as follows on the interval $\tau$-modules:	
			\begin{equation*}
			\begin{aligned}
			\mathbb{I}_\tau([k,k])\, &&\stackrel{\altmathcal{C}_k}{\longrightarrow} & \quad \mathdszero &\\
			\mathbb{I}_\tau([b,k])\, &&\longrightarrow & \quad\mathbb{I}_{\xi_k\tau}([b,k-1]) && \quad \text{for }b\leq k-1\\
			\mathbb{I}_\tau([k+1,d])\, &&\longrightarrow & \quad\mathbb{I}_{\xi_k\tau}([k,d]) && \quad \text{for }d\geq k+1\\
			\mathbb{I}_\tau([b,d])\, &&\longrightarrow & \quad\mathbb{I}_{\xi_k\tau}([b,d]) && \quad \text{otherwise}.\\
			\end{aligned}
			\end{equation*}
			
		\end{enumerate}
		\begin{proof} The proof is just a straightforward verification of the universal properties of limits and colimits. We omit the details.	\end{proof}
	\end{theorem}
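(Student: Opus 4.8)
The plan is to compute the relevant limit and colimit explicitly; in the forward-flow case these degenerate, and once they are known the action on interval modules is immediate. Suppose $\tau$ has a forward flow at index $k$ (recall $k\in\{2,\dots,n-1\}$, so there are no boundary-index subtleties). Then the extracted subdiagram (\ref{subdiagram}) is the composable pair $V_{k-1}\xrightarrow{p_{k-1}}V_k\xrightarrow{p_k}V_{k+1}$, i.e.\ a diagram indexed by the length-two path poset $\bullet\!\to\!\bullet\!\to\!\bullet$, which has an initial object (its leftmost vertex) and a terminal object (its rightmost vertex). Hence its limit is $V_{k-1}$ together with the cone $\lambda_{k-1}=\mathrm{id}_{V_{k-1}}$, $\lambda_k=p_{k-1}$, $\lambda_{k+1}=p_kp_{k-1}$, and dually its colimit is $V_{k+1}$ together with the cocone $\gamma_{k-1}=p_kp_{k-1}$, $\gamma_k=p_k$, $\gamma_{k+1}=\mathrm{id}_{V_{k+1}}$. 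Consequently $\altmathcal{L}_k(\pv)$ is obtained from $\pv$ by replacing the segment $V_{k-1}\to V_k\to V_{k+1}$ by $V_{k-1}\xleftarrow{\ \mathrm{id}\ }V_{k-1}\xrightarrow{\,p_kp_{k-1}\,}V_{k+1}$, while $\altmathcal{C}_k(\pv)$ is obtained by replacing it by $V_{k-1}\xrightarrow{\,p_kp_{k-1}\,}V_{k+1}\xleftarrow{\ \mathrm{id}\ }V_{k+1}$; all remaining vector spaces and structure maps are left untouched.

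It then remains to feed $\pv=\mathbb{I}_\tau([b,d])$ through these two recipes, using $V_i=\mathbb{F}$ for $b\le i\le d$ (else $0$) and $p_i=\mathrm{id}_{\mathbb{F}}$ for $b\le i<d$ (else $0$). For $\altmathcal{L}_k$, the new space at index $k$ is the copy $L_k=V_{k-1}$, attached to the old $V_{k-1}$ by an identity arrow and feeding the composite $p_kp_{k-1}$ into $V_{k+1}$, so the only data to track are which of $V_{k-1},V_{k+1}$ are zero and whether $p_kp_{k-1}$ is an identity or zero. A short check of the finitely many positions of $[b,d]$ relative to $\{k-1,k,k+1\}$ shows that the interval is left unchanged unless its right endpoint is exactly $k-1$ — in which case that endpoint is pushed to $k$, yielding $\mathbb{I}_{\sigma_k\tau}([b,k])$ — or its left endpoint is exactly $k$ — in which case that endpoint is pushed to $k+1$, yielding $\mathbb{I}_{\sigma_k\tau}([k+1,d])$, which equals $\mathdszero$ precisely when $d=k$. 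Reading off the endpoints in each configuration reproduces the table for $\altmathcal{L}_k$; the table for $\altmathcal{C}_k$ follows from the dual computation, with the roles of $V_{k-1}$ and $V_{k+1}$, and of the left and right endpoints, interchanged.

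There is no genuine obstacle here: the only thing requiring care is the handful of boundary configurations — the simple module $\mathbb{I}_\tau([k,k])$, intervals with an endpoint at $k-1$, $k$, or $k+1$, intervals straddling all three indices, and intervals disjoint from them — where one must correctly decide whether each new structure map is the identity or the zero map. Since a composite of identities is the identity and any composite factoring through a zero map is zero, and since the surviving nonzero structure maps produced by these recipes are genuine identities (so the output is literally an interval module of the expected type, not merely one equivalent to it under $\sim$), each case is settled by inspection. This is precisely the ``straightforward verification of the universal properties of limits and colimits'' referred to in the statement.
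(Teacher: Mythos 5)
Your proposal is correct and is precisely the ``straightforward verification'' the paper declines to spell out: you identify that, in the forward-flow case, the index category $\bullet\to\bullet\to\bullet$ has an initial and a terminal object, so $L_k\cong V_{k-1}$ and $C_k\cong V_{k+1}$ with the evident cone and cocone maps, and then reading off the endpoint shifts on interval modules is a finite case check. This matches the paper's (omitted) intended argument.
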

	
	The same computations can be made for reflections applied to the indices with backwards flows or to the indices $1$ or $n$, though these details are not so important for us. The upshot of Theorems \ref{intervalreflections} and \ref{flow_reflection} is that points in the diagram of zigzag module move at furthest horizontally or vertically to adjacent nodes on the integer lattice (see Figures \ref{reflection_diagram} and \ref{inex_diagram}).
	
	\begin{figure}
		\centering
		\scalebox{1.44}{
			\begin{tikzpicture}
			\draw[-latex,very thick] (1,0.5)--(3.5,0.5);\draw[-latex,very thick] (0.5,1)--(0.5,3.5);
			
			\draw[dashed,very thick] (0.5,0.5)--(1,0.5);
			\draw[dashed,very thick] (0.5,0.5)--(0.5,1);
			\draw[very thick] (0.479,0.5)--(0.5,0.5);
			
			\fill[gray]  (1.5,2) circle [radius=1.75pt];
			\fill[gray]  (1,2.5) circle [radius=1.75pt];
			\fill[gray]  (1.5,2) circle [radius=1.75pt];\fill[gray]  (2,2) circle [radius=1.75pt];
			\fill[gray]  (1,1.5) circle [radius=1.75pt];\fill[gray]  (1,2) circle [radius=1.75pt];
			\fill[gray]  (1.5,1.5) circle [radius=1.75pt];\fill[gray]  (1,1) circle [radius=1.75pt];
			\fill[gray]  (2.5,3) circle [radius=1.75pt];\fill[gray]  (2,3) circle [radius=1.75pt];
			\fill[gray]  (1.5,3) circle [radius=1.75pt];\fill[gray]  (1,3) circle [radius=1.75pt];
			\fill[gray] (2.5,2.5) circle [radius=1.75pt];\fill[gray] (2,2.5) circle [radius=1.75pt];
			\fill[gray]  (1.5,2.5) circle [radius=1.75pt];\fill[gray]  (3,3) circle [radius=1.75pt];
			
			\tikzset{cross/.style={cross out, draw=red, fill=none, minimum size=3, inner sep=0pt, outer sep=0pt}, cross/.default={2pt}}
			
			\node[draw,cross out,red,inner sep = 1pt,minimum size = 4.5pt] at (2,2){};

			\draw[-latex,semithick] (1.5,1.5)--(1.5,2);
			
			\draw[-latex,semithick] (1.0,1.5)--(1.0,2);
			
			\draw[-latex,semithick] (2.0,2.5)--(2.5,2.5);
			
			\draw[-latex,semithick] (2.0,3.0)--(2.5,3.0);

			\footnotesize
			\draw[very thin] (2,0.55) -- (2,0.45)node[anchor=north ] {$k$};
			\draw[very thin] (2.5,0.55) -- (2.5,0.45);
			\draw[very thin] (3,0.45) -- (3,0.55);
			\draw[very thin] (1,0.55) -- (1,0.45);
			\draw[very thin] (1.5,0.55) -- (1.5,0.45);
			
			\footnotesize
			\draw[very thin] (.55,2) -- (.45,2)node[anchor=east ] {$k$};
			\draw[very thin] (.55,2.5) -- (.45,2.5);
			\draw[very thin] (.55,3) -- (0.45,3);
			\draw[very thin] (.55,1) -- (0.45,1);
			\draw[very thin] (.55,1.5) -- (.45,1.5);
			\node [anchor=east] at (2.75,1.25) {$\altmathcal{L}_k$};
			\end{tikzpicture}\quad
			
			\begin{tikzpicture}
			\draw[-latex,very thick] (1,0.5)--(3.5,0.5);\draw[-latex,very thick] (0.5,1)--(0.5,3.5);
			
			\draw[dashed,very thick] (0.5,0.5)--(1,0.5);
			\draw[dashed,very thick] (0.5,0.5)--(0.5,1);
			\draw[very thick] (0.479,0.5)--(0.5,0.5);
			
			\fill[gray]  (1.5,2) circle [radius=1.75pt];
			\fill[gray]  (1,2.5) circle [radius=1.75pt];
			\fill[gray]  (1.5,2) circle [radius=1.75pt];\fill[gray]  (2,2) circle [radius=1.75pt];
			\fill[gray]  (1,1.5) circle [radius=1.75pt];\fill[gray]  (1,2) circle [radius=1.75pt];
			\fill[gray]  (1.5,1.5) circle [radius=1.75pt];\fill[gray]  (1,1) circle [radius=1.75pt];
			\fill[gray]  (2.5,3) circle [radius=1.75pt];\fill[gray]  (2,3) circle [radius=1.75pt];
			\fill[gray]  (1.5,3) circle [radius=1.75pt];\fill[gray]  (1,3) circle [radius=1.75pt];
			\fill[gray] (2.5,2.5) circle [radius=1.75pt];\fill[gray] (2,2.5) circle [radius=1.75pt];
			\fill[gray]  (1.5,2.5) circle [radius=1.75pt];\fill[gray]  (3,3) circle [radius=1.75pt];
			
			\tikzset{cross/.style={cross out, draw=red, fill=none, minimum size=3, inner sep=0pt, outer sep=0pt}, cross/.default={2pt}}
			
			\node[draw,cross out,red,inner sep = 1pt,minimum size = 4.5pt] at (2,2){};
			
			\draw[-latex,semithick] (1.5,2.0)--(1.5,1.5);
			
			\draw[-latex,semithick] (1.0,2.0)--(1.0,1.5);
			
			\draw[-latex,semithick] (2.5,2.5)--(2,2.5);
			
			\draw[-latex,semithick] (2.5,3.0)--(2,3.0);
			
			\footnotesize
			\draw[very thin] (2,0.55) -- (2,0.45)node[anchor=north ] {$k$};
			\draw[very thin] (2.5,0.55) -- (2.5,0.45);
			\draw[very thin] (3,0.45) -- (3,0.55);
			\draw[very thin] (1,0.55) -- (1,0.45);
			\draw[very thin] (1.5,0.55) -- (1.5,0.45);
			
			\footnotesize
			\draw[very thin] (.55,2) -- (.45,2)node[anchor=east ] {$k$};
			\draw[very thin] (.55,2.5) -- (.45,2.5);
			\draw[very thin] (.55,3) -- (0.45,3);
			\draw[very thin] (.55,1) -- (0.45,1);
			\draw[very thin] (.55,1.5) -- (.45,1.5);
			\node [anchor=east] at (2.75,1.25) {$\altmathcal{C}_k$};
			\end{tikzpicture}\quad}
		\caption{Points in the persistence diagram of a zigzag module $\mathds{V}$ with a \textit{forward flow} at index $k$ move according to the arrows when the extroversion functor $\altmathcal{L}_k$ (left) or introversion functor $\altmathcal{C}_k$ (right) are applied. The point $(k,k)$ corresponding to the simple summand $\mathbb{I}([k,k])$ is annihilated by both of these functors.}
		\label{inex_diagram}        
	\end{figure}
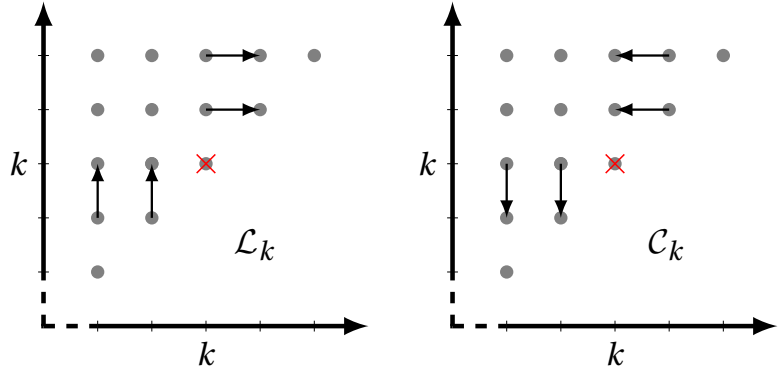
	
	\subsection{Removing Simple Summands}\label{sec:sanitation}
	In the next section we define the reflection distance. In order to avoid having the distance between a simple summand and the zero zigzag module be nonzero, we introduce a map sending a zigzag module $\pv$ to a zigzag module $\pv'$ obtained from $\pv$ by removing simple summands.
	
	We define the map $\altmathcal{S}:\nmod\to\nmod$ as follows: given a zigzag module $\pv$ with interval decomposition $\pv = \bigoplus_{[b,d]\in \dgm(\pv)}\mathbb{I}([b,d])$, we define
	\[\altmathcal{S}(\pv):= \bigoplus_{\substack{[b,d]\in \dgm(\pv)\\b\neq d}}\mathbb{I}([b,d]).\]
	That is, $\altmathcal{S}(\pv)$ is obtained from $\pv$ by removing simple summands. If $\pv$ is a direct sum of simple modules, then we set $\altmathcal{S}(\pv) := \mathdszero$. We also set $\altmathcal{S}(\mathdszero) := \mathdszero$. By definition, $\altmathcal{S}$ sends zigzag modules of type $\tau$ to zigzag modules of type $\tau$ so that $\altmathcal{S}$ restricts to a map $\altmathcal{S}:\taumod{\tau}\to\taumod{\tau}$ for each type $\tau$.
	\begin{remarks}\label{sanitize_alternate_def}
		\begin{enumerate}[(1)]
			\item If  $\,\pv = \pv_1\oplus\pv_2$ where $\pv_1$ is a direct sum of simple modules and $\pv_2$ has no simple summands, then $\altmathcal{S}(\pv) = \pv_2$.
			\item By definition, $\altmathcal{S}(\pv)\preceq \pv$ and hence $\altmathcal{S}(\pv)\precsim\pv$ for any zigzag module $\pv$.
			\item $\altmathcal{S}(\altmathcal{S}(\pv)) = \altmathcal{S}(\pv)$ for all zigzag modules $\pv$.
			\item $\dgm(\altmathcal{S}(\pv)) = \lms(b,d)\in \dgm(\pv) \ | \ b\neq d\rms$. That is, the diagram of $\altmathcal{S}(\pv)$ is that of $\,\pv$ minus the points on the diagonal.
		\end{enumerate}
	\end{remarks}
	
	We also have the following analogues of Corollaries \ref{refsummand} and \ref{refprecsim}:
	
	\begin{proposition}\label{sanitize_summand}
		Fix $k\in\{1,\hdots,n\}$, let $\tau\in \altmathcal{T}_n$, and let $\pv$, $\pw\in\taumod{\tau}$. If $\,\pw\preceq \pv$ then $\altmathcal{S}(\pw)\preceq\altmathcal{S}(\pv)$.
		\begin{proof}If $\pw\preceq\pv$ then $\pv \cong \pw\oplus\pu$ for some $\pu\in\taumod{\tau}$. Write
			\[\pw= \pw_1\oplus \pw_2 \quad \textup{ and } \quad \pu = \pu_1\oplus\pu_2,\]
			where $\pw_1,\pu_1$ are direct sums of simple modules and $\pw_2,\pu_2$ have no simple summands. By Remark \ref{sanitize_alternate_def}, $\altmathcal{S}(\pw) = \pw_2$ and $\altmathcal{S}(\pu) = \pu_2$. Moreover $\pv \cong (\pw_1\oplus\pu_1)\oplus(\pw_2\oplus\pu_2)$, with $\pw_1\oplus\pu_1$ being a direct sum of simple modules and $\pw_2\oplus\pu_2$ having no simple summands. Then again by Remark \ref{sanitize_alternate_def},
			\[ \altmathcal{S}(\pv) \cong \pw_2\oplus\pu_2 = \altmathcal{S}(\pw)\oplus \altmathcal{S}(\pu),\]
			and hence $\altmathcal{S}(\pw)\preceq \altmathcal{S}(\pv)$.
		\end{proof}
	\end{proposition}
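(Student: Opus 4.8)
The plan is to reduce the claim to bookkeeping with interval decompositions. First I would use $\pw\preceq\pv$ to fix a $\tau$-module $\pu$ with $\pv\cong\pw\oplus\pu$. Then, invoking Theorem \ref{krull}, I would split each of $\pw$ and $\pu$ into the direct sum of its simple interval summands and the direct sum of its non-simple interval summands, writing $\pw\cong\pw_1\oplus\pw_2$ and $\pu\cong\pu_1\oplus\pu_2$ with $\pw_1,\pu_1$ direct sums of modules of the form $\mathbb{I}_\tau([k,k])$ and with $\pw_2,\pu_2$ containing no simple summand. By the characterization of $\altmathcal{S}$ recorded in Remark \ref{sanitize_alternate_def}(1), this identifies $\altmathcal{S}(\pw)=\pw_2$ and $\altmathcal{S}(\pu)=\pu_2$.

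Next I would regroup the decomposition of $\pv$ as $\pv\cong(\pw_1\oplus\pu_1)\oplus(\pw_2\oplus\pu_2)$, note that $\pw_1\oplus\pu_1$ is again a direct sum of simple modules, and --- this being the only point requiring care --- that $\pw_2\oplus\pu_2$ still has no simple summand. Applying Remark \ref{sanitize_alternate_def}(1) once more then gives $\altmathcal{S}(\pv)\cong\pw_2\oplus\pu_2=\altmathcal{S}(\pw)\oplus\altmathcal{S}(\pu)$, which immediately yields $\altmathcal{S}(\pw)\preceq\altmathcal{S}(\pv)$.

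The main obstacle is exactly that parenthetical claim: a priori one might worry that direct-summing $\pw_2$ and $\pu_2$ could conjure a new simple summand. Here I would appeal to the \emph{uniqueness} half of Theorem \ref{krull}: the interval summands of $\pw_2\oplus\pu_2$, counted with multiplicity, are precisely those of $\pw_2$ together with those of $\pu_2$, none of which is simple, so no $\mathbb{I}_\tau([k,k])$ can occur in its decomposition. Everything else in the argument is direct manipulation of the definition of $\altmathcal{S}$.

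As a backup, one can avoid the explicit splitting by working at the level of persistence diagrams: Proposition \ref{perssum} turns $\pw\preceq\pv$ into the multiset inclusion $\dgm(\pw)\subseteq\dgm(\pv)$, deleting all diagonal points preserves this inclusion, and by Remark \ref{sanitize_alternate_def}(4) the resulting multisets are $\dgm(\altmathcal{S}(\pw))$ and $\dgm(\altmathcal{S}(\pv))$; reconstructing the modules via decomposition (\ref{persdecomp}) (both carry the common type $\tau$, since $\altmathcal{S}$ preserves type) upgrades the diagram inclusion back to $\altmathcal{S}(\pw)\preceq\altmathcal{S}(\pv)$. This route carries the same single subtlety, now hidden inside uniqueness of interval decompositions.
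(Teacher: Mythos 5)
Your proof follows the paper's argument essentially line for line: write $\pv\cong\pw\oplus\pu$, split $\pw$ and $\pu$ each into simple and non-simple interval parts, regroup, and apply Remark \ref{sanitize_alternate_def}(1) twice. You are in fact a bit more careful than the paper in explicitly invoking the uniqueness clause of Theorem \ref{krull} to rule out a fresh simple summand appearing in $\pw_2\oplus\pu_2$, a step the paper leaves implicit; the persistence-diagram route you sketch as a backup is a valid equivalent reformulation but is not what the paper does.
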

	
	\begin{proposition}\label{sanitize_precsim}
		Fix $k\in\{1,\hdots,n\}$, let $\tau\in \altmathcal{T}_n$, and let $\pv$, $\pw\in\nmod$. If $\,\pw\precsim \pv$ then $\altmathcal{S}(\pw)\precsim\altmathcal{S}(\pv)$.
		\begin{proof}
			If $\pw\precsim \pv$ then $\pw\sim\pw'\preceq \pv$ for some $\pw'$. Now note that that if $\pw\sim \pw'$ then $\altmathcal{S}(\pw)\sim \altmathcal{S}(\pw')$. This is because if an arrow representing an isomorphism is reversed, then the source and target of this arrow cannot be an index at which a simple summand appears. From this fact and Corollary \ref{sanitize_summand}, we have \[\altmathcal{S}(\pw)\sim\altmathcal{S}(\pw')\preceq \altmathcal{S}(\pv),\]
			and hence $\altmathcal{S}(\pw)\precsim\altmathcal{S}(\pv)$.
		\end{proof}
	\end{proposition}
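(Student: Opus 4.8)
The plan is to imitate the proof of Corollary \ref{refprecsim}, with the reflection functor replaced by the simple-summand-removal map $\altmathcal{S}$; the only genuinely new ingredient needed is that $\altmathcal{S}$ is compatible with the equivalence relation $\sim$.

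First I would unwind the hypothesis: $\pw\precsim\pv$ furnishes a zigzag module $\pw'\in\nmod$ with $\pw\sim\pw'$ and $\pw'\preceq\pv$.

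The heart of the argument is the claim that $\pw\sim\pw'$ implies $\altmathcal{S}(\pw)\sim\altmathcal{S}(\pw')$. Since isomorphic modules have identical interval decompositions (Theorem \ref{krull}), the map $\altmathcal{S}$ respects $\cong$, and so by definition of $\sim$ it suffices to prove the single-step statement: if $\pv\in\taumod{\tau}^{\textup{iso},k}$ then $\altmathcal{S}(\altmathcal{A}_k(\pv))\sim\altmathcal{S}(\pv)$. The crucial observation is that when the $k$-th structure map of $\pv$ is an isomorphism, no simple interval summand $\mathbb{I}_\tau([j,j])$ with $j\in\{k,k+1\}$ can occur in the decomposition of $\pv$: by Proposition \ref{types_of_summand_maps} the $k$-th structure map of such a summand would have to be bijective, but for $j\in\{k,k+1\}$ that map is a linear map between $\mathbb{F}$ and $0$, which it cannot be. Consequently $\altmathcal{A}_k$ acts trivially on every simple summand of $\pv$ (for the surviving simple summands $\mathbb{I}_\tau([j,j])$ with $j\notin\{k,k+1\}$ the $k$-th arrow is the zero map $0\to 0$), while it carries the non-simple part of the decomposition of $\pv$ onto the non-simple part of the decomposition of $\altmathcal{A}_k(\pv)$. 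Hence $\altmathcal{A}_k(\altmathcal{S}(\pv))$ and $\altmathcal{S}(\altmathcal{A}_k(\pv))$ have the same interval summands, so by Theorem \ref{krull} they are isomorphic; since $\altmathcal{A}_k(\altmathcal{S}(\pv))\sim\altmathcal{S}(\pv)$, the single-step statement follows. Iterating along the sequence of arrow reversals witnessing $\pw\sim\pw'$ proves the claim. Feeding $\pw'\preceq\pv$ into Proposition \ref{sanitize_summand} then gives $\altmathcal{S}(\pw')\preceq\altmathcal{S}(\pv)$, and combining with the claim yields
\[ \altmathcal{S}(\pw)\sim\altmathcal{S}(\pw')\preceq\altmathcal{S}(\pv), \]
which is precisely $\altmathcal{S}(\pw)\precsim\altmathcal{S}(\pv)$.

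The step I expect to be the main obstacle is the bookkeeping in the previous paragraph: one has to be careful that $\altmathcal{A}_k$ changes the type, so that before writing $\altmathcal{A}_k(\altmathcal{S}(\pv))$ one must first check $\altmathcal{S}(\pv)\in\taumod{\tau}^{\textup{iso},k}$ — which holds because $\altmathcal{S}(\pv)\preceq\pv$ and Proposition \ref{arrow_reversal_properties}(3) preserves membership in $\taumod{\tau}^{\textup{iso},k}$ — and only then invoke uniqueness of interval decompositions to identify the two modules.
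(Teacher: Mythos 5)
Your proof is correct and follows essentially the same route as the paper: both arguments hinge on the observation that reversing an arrow representing an isomorphism commutes with the removal of simple summands (because Proposition~\ref{types_of_summand_maps} rules out simple summands supported at the endpoints of such an arrow), and then invoke Proposition~\ref{sanitize_summand} to conclude. The paper states this commutation in a single sentence, whereas you spell out the supporting details (the single-step statement, the decomposition into simple and non-simple parts, and the membership check $\altmathcal{S}(\pv)\in\taumod{\tau}^{\textup{iso},k}$); this is sound and fills a gap the paper leaves implicit.
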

	
	\begin{definition}
		Define an equivalence relation $\sim_\altmathcal{S}$ on $\nmod$ by
		\[ \pv\sim_\altmathcal{S}\pw \quad \iff \quad \altmathcal{S}(\pv)\precsim\pw \quad \textup{ and } \quad \altmathcal{S}(\pw)\precsim\pv.\]
	\end{definition}
	\noindent Reflexivity of $\sim_{\altmathcal{S}}$ follows from the fact that $\altmathcal{S}(\pv)\precsim \pv$ and symmetry is immediate from the definition. To see that $\sim_{\altmathcal{S}}$ is transitive, suppose that $\pv\sim_\altmathcal{S}\pw$ and $\pw \sim_{\altmathcal{S}}\pu$. Then $$\altmathcal{S}(\pv)\precsim \pw,\quad  \altmathcal{S}(\pw)\precsim \pv,\quad  \altmathcal{S}(\pw)\precsim \pu, \quad \textup{ and }\quad \altmathcal{S}(\pu)\precsim \pw.$$
	Using Proposition \ref{sanitize_precsim} together with the fact that $\altmathcal{S}(\altmathcal{S}(\pv)) = \altmathcal{S}(\pv)$ for all zigzag modules $\pv$, we have
	
	\[\altmathcal{S}(\pv) = \altmathcal{S}(\altmathcal{S}(\pv))\precsim \altmathcal{S}(\pw)\precsim\pu \quad \textup{ and } \quad \altmathcal{S}(\pu) = \altmathcal{S}(\altmathcal{S}(\pu))\precsim \altmathcal{S}(\pw)\precsim\pv.
	\]
	By transitivity of the preorder $\precsim$, we have $\altmathcal{S}(\pv)\precsim \pu$ and $\altmathcal{S}(\pu)\precsim \pv$ so that $\pv \sim_{\altmathcal{S}}\pu$. Thus $\sim_{\altmathcal{S}}$ is indeed an equivalence relation on $\nmod$.

	\section{The Reflection Distance}\label{sec:ref-distance}
	
	Let $\mathfrak{R} = (\altmathcal{R}_{k_\ell},\altmathcal{R}_{k_{\ell-1}},\cdots,\altmathcal{R}_{k_1})$ be a sequence of $\ell$ reflection functors with $1\leq k_i\leq n$ for all $i$. For $\pv\in\nmod$, we denote by $\mathfrak{R}(\pv)$ the zigzag module \[(\altmathcal{S}\circ\altmathcal{R}_{k_\ell}\circ\altmathcal{S}\circ\altmathcal{R}_{k_{\ell-1}}\circ\cdots \circ\altmathcal{S}\circ\altmathcal{R}_{k_1}\circ\altmathcal{S})(\pv),\] 
	where $\altmathcal{S}$ is the map defined in Section \ref{sec:sanitation}. That is, $\mathfrak{R}(\pv)$ is obtained from $\pv$ by first removing simple summands from $\pv$, applying the reflection $\altmathcal{R}_{k_1}$, removing simple summands from the resulting zigzag module, and so on.  We also set $\varepsilon(\pv) :=\altmathcal{S}(\pv)$, where $\varepsilon$ denotes the empty sequence. 
	
	For each $p\in[1,\infty)$ define the \textit{$p$-cost} of a nonempty sequence $\mathfrak{R}$ of length $\ell$ by
	\[C_p(\mathfrak{R}):= \ell^{1/p}\]
	and define $C_p(\varepsilon) = 0$.

	\begin{definition}\label{reflection_distance} For each $p\in[1,\infty)$, we define a function $d_{\altmathcal{R}}^p:\nmod\times\nmod\to \R$ by setting
		\[ d_{\altmathcal{R}}^p(\pv,\pw):= \min_{(\mathfrak{R},\mathfrak{R}')}\big\{\max\{C_p(\mathfrak{R}),C_p(\mathfrak{R}')\} \ | \ \mathfrak{R}(\pv)\precsim \pw \text{ and } \mathfrak{R}'(\pw)\precsim \pv\big\},\]
		where the minimum is taken over all pairs $(\mathfrak{R},\mathfrak{R}')$ of sequences of reflection functors.
	\end{definition}
	
	Given two sequences $\mathfrak{R} = (\altmathcal{R}_{k_\ell},\altmathcal{R}_{k_{\ell-1}},\dots,\altmathcal{R}_{k_1})$, $\mathfrak{R}' = (\altmathcal{R}_{m_j},\altmathcal{R}_{m_{j-1}},\dots,\altmathcal{R}_{m_1})$ of reflection functors, denote by $\mathfrak{R}'\circ \mathfrak{R}$ the concatenation $(\altmathcal{R}_{m_j},\dots,\altmathcal{R}_{m_1},\altmathcal{R}_{k_\ell},\dots,\altmathcal{R}_{k_1})$. 
	\begin{fact}\label{costadd}
		For any $p\in [1,\infty)$ we have $C_p(\mathfrak{R}' \circ \mathfrak{R})\leq C_p(\mathfrak{R}) + C_p(\mathfrak{R}')$.
		\begin{proof} This follows from the inequality $(a + b)^{1/p}\leq a^{1/p} + b^{1/p}$ for all nonnegative integers $a,b$.
		\end{proof}
	\end{fact}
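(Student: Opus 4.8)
The plan is to reduce the claimed inequality to the elementary subadditivity of the map $t\mapsto t^{1/p}$ on the nonnegative reals. First I would unwind the definitions: writing $\ell$ for the length of $\mathfrak{R}$ and $j$ for the length of $\mathfrak{R}'$, the concatenation $\mathfrak{R}'\circ\mathfrak{R}$ is by construction a sequence of $\ell+j$ reflection functors (the interleaved copies of $\altmathcal{S}$ that appear when forming $\mathfrak{R}(\pv)$ play no role in the length count), so that $C_p(\mathfrak{R}'\circ\mathfrak{R}) = (\ell+j)^{1/p}$, $C_p(\mathfrak{R}) = \ell^{1/p}$, and $C_p(\mathfrak{R}') = j^{1/p}$, with the convention $C_p(\varepsilon) = 0$ covering the empty sequence. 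Thus the assertion is precisely the numerical inequality $(\ell+j)^{1/p}\le \ell^{1/p} + j^{1/p}$, which I would in fact establish for all nonnegative reals (hence in particular all nonnegative integers).

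Next I would dispatch the degenerate cases $\ell = 0$ or $j = 0$, where the inequality reduces to an equality, and then assume $\ell,j>0$. For this case I would use that $p\ge 1$ gives $0<1/p\le 1$ and argue subadditivity in either of two standard ways. One option: $f(t)=t^{1/p}$ is concave on $[0,\infty)$ and satisfies $f(0)=0$, and every such function is subadditive. The other, more self-contained option: set $x=\ell/(\ell+j)\in(0,1)$, so $1-x=j/(\ell+j)$, and note that raising a number in $(0,1)$ to a power $\le 1$ can only increase it, whence $x^{1/p}+(1-x)^{1/p}\ge x+(1-x)=1$; multiplying through by $(\ell+j)^{1/p}$ yields $\ell^{1/p}+j^{1/p}\ge(\ell+j)^{1/p}$.

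I do not expect any real obstacle here; the statement is purely a property of the cost function $C_p$ and never touches zigzag modules. The only points needing a moment's care are the bookkeeping that concatenation adds the lengths of the two functor sequences exactly (including the treatment of $\varepsilon$) and applying the monotonicity ``$t^{1/p}\ge t$ on $(0,1)$'' in the correct direction. Downstream, this Fact is exactly the subadditivity hypothesis $C_p(\mathfrak{R}_2\circ\mathfrak{R}_1)\le C_p(\mathfrak{R}_1)+C_p(\mathfrak{R}_2)$ needed to conclude that $d^p_\altmathcal{R}$ satisfies the triangle inequality, i.e.\ is a pseudometric.
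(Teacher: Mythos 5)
Your proof is correct and follows the same route as the paper's: both reduce the claim to the scalar inequality $(a+b)^{1/p}\le a^{1/p}+b^{1/p}$ for nonnegative $a,b$, which the paper states without proof and you additionally justify (via concavity or the normalization trick). The extra bookkeeping about concatenated lengths and the empty sequence is implicit in the paper's one-line proof but spelled out correctly in yours.
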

	
	We will say that a sequence $\mathfrak{R}$ of reflection functors \textit{annihilates} a summand $\pw$ of $\pv$ if $\mathfrak{R}(\pw) = \mathdszero$. Note that if $\pv = \pw\oplus \pu$ and $\mathfrak{R}$ annihilates $\pw$ then $\mathfrak{R}(\pv) \cong \mathfrak{R}(\pu)$.
	
	\begin{proposition}
		For any $\pv\in\nmod$ there exists a sequence of reflection functors $\mathfrak{R}$ which annihilates $\pv$, i.e., $\mathfrak{R}(\pv) = \mathdszero$.
		\begin{proof}
			Let $\tau = \textup{type}(\pv)$. It suffices to show that for any interval module $\mathbb{I}_\tau([b,d])$, there is a composition of reflections $\mathfrak{R}$ such that $\mathfrak{R}(\mathbb{I}_\tau([b,d])) = \mathdszero$. For if this is the case then we can iteratively annihilate interval summands of $\pv$ until we arrive at $\mathdszero$.
			
			We demonstrate that this can be done as follows: the interval $\tau$-module $\mathbb{I}_\tau([b,d])$ has either a sink, source, or flow at index $d$. In any case, Theorems \ref{intervalreflections} and \ref{flow_reflection} guarantee that we can choose a reflection functor $\altmathcal{R}_d$ at index $d$ such that $\altmathcal{R}_d(\mathbb{I}([b,d]))$ is an interval module supported over $[b,d-1]$. In this way, we obtain a sequence $\mathfrak{R}_1  = (\altmathcal{R}_{b},\altmathcal{R}_{b+1},\cdots,\altmathcal{R}_{d-1},\altmathcal{R}_{d})$ such that $\mathfrak{R}_1(\mathbb{I}_\tau([b,d]))= \mathdszero$. Applying this procedure again to an interval summand of $\mathfrak{R}^1(\pv)$ yields a sequence $\mathfrak{R}_2$ of reflection functors which annihilates this summand. Since $\pv\in \nmod$ has only finitely many summands, and since applying a reflection functor to a zigzag module can only reduce the number of summands, we obtain a sequence $(\mathfrak{R}_1,\dots,\mathfrak{R}_m)$ of sequences of reflection functors, where $m \leq|\dgm(\pv)|$, for which the concatenation $\mathfrak{R} = \mathfrak{R}_m\circ\cdots\circ\mathfrak{R}_1$ satisfies $\mathfrak{R}(\pv) = \mathdszero$.
		\end{proof}
	\end{proposition}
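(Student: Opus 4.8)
The plan is to reduce to the case of a single interval summand, shrink that interval to the zero module one endpoint at a time, and then iterate over all the summands of $\pv$. By Theorem~\ref{krull} I would write $\pv\cong\bigoplus_{(b,d)\in\dgm(\pv)}\mathbb{I}_\tau([b,d])$ with $\tau=\type(\pv)$. As noted just before the statement, if a sequence $\mathfrak{R}$ annihilates a summand $\pw$ of $\pv$ then $\mathfrak{R}(\pv)\cong\mathfrak{R}(\pu)$ for a complementary summand $\pu$; so it is enough to exhibit, for every interval $\tau$-module $\mathbb{I}_\tau([b,d])$, a sequence $\mathfrak{R}$ with $\mathfrak{R}(\mathbb{I}_\tau([b,d]))=\mathdszero$, and then annihilate the interval summands of $\pv$ one at a time. (Chaining the resulting sequences by concatenation is legitimate because the sanitation steps satisfy $\altmathcal{S}\circ\altmathcal{S}=\altmathcal{S}$, so concatenation of sequences agrees with composition of the associated operators.)

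To annihilate a single interval $\mathbb{I}_\tau([b,d])$ I would repeatedly apply a reflection at its current right endpoint in order to decrease $d$. If $b=d$ the module is simple and the interleaved $\altmathcal{S}$ already sends it to $\mathdszero$; otherwise $d\in\{2,\dots,n\}$, and I inspect the local shape of $\tau$ at index $d$ --- sink, source, forward flow, or backward flow. In each of these cases the Diamond Principle (Theorems~\ref{intervalreflections} and~\ref{flow_reflection}, together with the indicated analogues for sources, for backward flows, and for the boundary index $d=n$, where the direction of the auxiliary zero map must also be chosen) provides a reflection $\altmathcal{R}_d$ at index $d$ with $\altmathcal{R}_d(\mathbb{I}_\tau([b,d]))\cong\mathbb{I}_{\tau'}([b,d-1])$ for the resulting type $\tau'$. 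Iterating, the right endpoint descends step by step until the interval becomes simple, at which point it is killed by one more reflection at index $b$ (or simply by the next $\altmathcal{S}$). This produces a sequence $\mathfrak{R}_{[b,d]}=(\altmathcal{R}_b,\altmathcal{R}_{b+1},\dots,\altmathcal{R}_d)$ with $\mathfrak{R}_{[b,d]}(\mathbb{I}_\tau([b,d]))=\mathdszero$.

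Returning to $\pv$: applying $\mathfrak{R}_{[b_1,d_1]}$ annihilates the first interval summand, and by Theorem~\ref{bigsumreflection} the result is again a direct sum of interval modules --- with strictly fewer summands, since neither a reflection functor (by the Diamond Principle, each interval is sent to a single interval or to $\mathdszero$) nor $\altmathcal{S}$ can ever create a new interval summand. Repeating at most $|\dgm(\pv)|$ times reaches $\mathdszero$, and the concatenation $\mathfrak{R}$ of the sequences used satisfies $\mathfrak{R}(\pv)=\mathdszero$.

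The one step that needs care is the single-interval reduction: one must verify that in \emph{every} local configuration at the decreasing endpoint, including the boundary indices $1$ and $n$, some reflection functor genuinely \emph{shortens} the interval rather than fixing or lengthening it. This is exactly what the Diamond Principle asserts in its several cases; once it is in hand, termination of the outer iteration is automatic because no reflection functor and no application of $\altmathcal{S}$ can increase the number of interval summands.
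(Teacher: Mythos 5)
Your proposal is correct and takes essentially the same approach as the paper: reduce to a single interval summand, shrink the interval one endpoint at a time via a reflection chosen by the Diamond Principle at the current right endpoint, and iterate over the finitely many interval summands, concatenating the resulting sequences. The only difference is that you spell out a few points the paper leaves implicit (the role of $\altmathcal{S}$ in killing the final simple stage, the boundary indices $1$ and $n$, and why the summand count is strictly decreasing), which are fine additions but do not change the argument.
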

	
	\begin{remark}\label{finiteness_remark}
		By Theorems \ref{intervalreflections} and \ref{flow_reflection}, it is clear that it takes exactly $d-b$ reflections to annihilate an interval module supported over $[b,d]$. It follows that $d_{\altmathcal{R}}^p(\mathbb{I}([b,d]),\mathdszero) = (d-b)^{1/p}$. Moreover, if $|\dgm(\pv)| = K$ for $\pv\in \nmod$ then $d_{\altmathcal{R}}^p(\pv,\mathdszero) \leq Kn$. This follows since we can iteratively annihilate each of the $K$ interval summands of $\,\pv$ with at most $n$ reflections.
		
	\end{remark}

	\begin{theorem}\label{thm:ref_is_metric}
		For each $n\in \N$ and for all $p\in[1,\infty)$, the function $d_{\altmathcal{R}}^p$ is a pseudometric on $n\textup{-Mod}$. Moreover, $d_{\altmathcal{R}}^p(\pv,\pw) = 0$ if and only if $\,\pv\sim_{\altmathcal{S}}\pw$ so that $d_{\altmathcal{R}}^p$ induces a metric on $n\textup{-Mod}/{\sim_\altmathcal{S}}$.
		\begin{proof} It follows from Remark \ref{finiteness_remark} that $d_\altmathcal{R}^p(\pv,\pw)$ is finite for all $\pv,\pw\in \nmod$. The facts that $d_{\altmathcal{R}}^p$ is a non-negative and symmetric function follow immediately. Since $\epsilon(\pv) = \altmathcal{S}(\pv)\precsim\pv$ we have $d_{\altmathcal{R}}^p(\pv,\pv) = 0$ for all $\pv\in n\textup{-Mod}$.
			
			Next, we verify the triangle inequality. Fix $p\in [1,\infty)$ and let $\pv_1$, $\pv_2$, $\pv_3\in\nmod$. Then there exist sequences of reflections $\mathfrak{R}_1, \mathfrak{R}_2$ such that           
			\[\mathfrak{R}_1(\pv_1)\precsim\pv_2,\quad \mathfrak{R}_2(\pv_2)\precsim\pv_1 \quad \textup{ with } \quad C_p(\mathfrak{R}_1)\leq d_{\altmathcal{R}}^p(\pv_1,\pv_2), \quad C_p(\mathfrak{R}_2)\leq d_{\altmathcal{R}}^p(\pv_1,\pv_2).
			\]
			Similarly, there exist sequences of reflections $\mathfrak{R}_1', \mathfrak{R}_2'$ such that 
			\[\mathfrak{R}_1'(\pv_2)\precsim\pv_3, \quad \mathfrak{R}_2'(\pv_3)\precsim\pv_2\quad \textup{ with } \quad C_p(\mathfrak{R}_1')\leq d_{\altmathcal{R}}^p(\pv_2,\pv_3), \quad C_p(\mathfrak{R}_2')\leq d_{\altmathcal{R}}^p(\pv_2,\pv_3).\]
			Then by Corollary \ref{refprecsim} and Proposition \ref{sanitize_precsim}, $\mathfrak{R}_1'\circ\mathfrak{R}_1(\pv_1)\precsim\mathfrak{R}_1'(\pv_2)\precsim \pv_3$ and $\mathfrak{R}_2\circ\mathfrak{R}_2'(\pv_3)\precsim\mathfrak{R}_2(\pv_2)\precsim\pv_1$. By transitivity of the preorder $\precsim$, we have
			\begin{equation}\label{composite_ref}\mathfrak{R}_1'\circ\mathfrak{R}_1(\pv_1)\precsim \pv_3\quad \textup{ and }\quad \mathfrak{R}_2\circ\mathfrak{R}_2'(\pv_3)\precsim\pv_1.\end{equation}
			By Fact \ref{costadd}, $C_p(\mathfrak{R}_1'\circ\mathfrak{R}_1)\leq C_p(\mathfrak{R}_1')+C_p(\mathfrak{R}_1)$ and $C_p(\mathfrak{R}_2\circ\mathfrak{R}_2')\leq C_p(\mathfrak{R}_2) + C_p(\mathfrak{R}_2')$ so that
			\begin{equation} \label{composite_cost}C_p(\mathfrak{R}_1'\circ\mathfrak{R}_1)\leq d_{\altmathcal{R}}^p(\pv_1,\pv_2) + d_{\altmathcal{R}}^p(\pv_2,\pv_3)\quad \textup{ and }\quad C_p(\mathfrak{R}_2\circ\mathfrak{R}_2')\leq d_{\altmathcal{R}}^p(\pv_1,\pv_2) + d_{\altmathcal{R}}^p(\pv_2,\pv_3).\end{equation} 
			Equations (\ref{composite_ref}) and (\ref{composite_cost}) together imply \[d_{\altmathcal{R}}^p(\pv_1,\pv_3)\leq  d_{\altmathcal{R}}^p(\pv_1,\pv_2) + d_{\altmathcal{R}}^p(\pv_2,\pv_3).\]

			Now if $\pv\sim_{\altmathcal{S}}\pw$ then $\epsilon(\pv) = \altmathcal{S}(\pv)\precsim\pw$ and $\epsilon(\pw) = \altmathcal{S}(\pw)\precsim \pv$ so that $d_{\altmathcal{R}}^p(\pv,\pw) = 0$. Conversely, if $\pv \not\sim_\altmathcal{S} \pw$ then either $\altmathcal{S}(\pv)\not\precsim\pw$ or $\altmathcal{S}(\pw)\not\precsim\pv$. In any case, we must apply some non-trivial reflection to either $\pv$ or $\pw$, incurring a nonzero cost so that $d_{\altmathcal{R}}^p(\pv,\pw) > 0$. It follows that if $\pv \sim_\altmathcal{S}\pv'$ and $\pw\sim_\altmathcal{S} \pw'$ then $d_{\altmathcal{R}}^p(\pv,\pw) = d_{\altmathcal{R}}^p(\pv',\pw')$ so that $d_{\altmathcal{R}}^p$ induces a well-defined metric on the equivalence classes of $\nmod$ under $\sim_\altmathcal{S}$.
		\end{proof}
	\end{theorem}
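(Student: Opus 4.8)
The plan is to verify the pseudometric axioms for $d_{\altmathcal{R}}^p$ directly from Definition~\ref{reflection_distance}, with the triangle inequality as the only substantive step, and then to identify exactly when the distance vanishes. First I would confirm that $d_{\altmathcal{R}}^p(\pv,\pw)$ is a well-defined finite real number. The set of feasible pairs $(\mathfrak{R},\mathfrak{R}')$ is nonempty: by the proposition preceding Remark~\ref{finiteness_remark} there are sequences $\mathfrak{R},\mathfrak{R}'$ annihilating $\pv$ and $\pw$ respectively, and since $\mathdszero\precsim\pw$ and $\mathdszero\precsim\pv$ always hold, such a pair is feasible with cost finite (bounded as in Remark~\ref{finiteness_remark}). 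Since all attainable values $\max\{C_p(\mathfrak{R}),C_p(\mathfrak{R}')\}$ lie in the set $\{0\}\cup\{\ell^{1/p}:\ell\in\N\}$, which meets any bounded interval in finitely many points, the minimum over a bounded nonempty subset is attained. Non-negativity is clear, and symmetry holds because the defining condition is invariant under simultaneously swapping $\pv\leftrightarrow\pw$ and $\mathfrak{R}\leftrightarrow\mathfrak{R}'$ (the map $\max$ being symmetric). For $d_{\altmathcal{R}}^p(\pv,\pv)=0$ I would take the pair $(\varepsilon,\varepsilon)$: by Remark~\ref{sanitize_alternate_def} we have $\varepsilon(\pv)=\altmathcal{S}(\pv)\precsim\pv$, and $C_p(\varepsilon)=0$.

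The heart of the proof is the triangle inequality. Given $\pv_1,\pv_2,\pv_3\in\nmod$, I would choose pairs $(\mathfrak{R}_1,\mathfrak{R}_2)$ realizing $d_{\altmathcal{R}}^p(\pv_1,\pv_2)$ and $(\mathfrak{R}_1',\mathfrak{R}_2')$ realizing $d_{\altmathcal{R}}^p(\pv_2,\pv_3)$. Applying $\mathfrak{R}_1'$ to $\mathfrak{R}_1(\pv_1)\precsim\pv_2$ and using that every reflection functor (Corollary~\ref{refprecsim}) and the map $\altmathcal{S}$ (Proposition~\ref{sanitize_precsim}) preserve $\precsim$ gives $\mathfrak{R}_1'\big(\mathfrak{R}_1(\pv_1)\big)\precsim\mathfrak{R}_1'(\pv_2)\precsim\pv_3$. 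The bookkeeping point is that, since $\altmathcal{S}\circ\altmathcal{S}=\altmathcal{S}$ by Remark~\ref{sanitize_alternate_def}, evaluating the concatenated sequence $\mathfrak{R}_1'\circ\mathfrak{R}_1$ on $\pv_1$ produces the same module as $\mathfrak{R}_1'\big(\mathfrak{R}_1(\pv_1)\big)$ (the two adjacent sanitation maps at the join collapse), hence $(\mathfrak{R}_1'\circ\mathfrak{R}_1)(\pv_1)\precsim\pv_3$ and symmetrically $(\mathfrak{R}_2\circ\mathfrak{R}_2')(\pv_3)\precsim\pv_1$. Since Fact~\ref{costadd} bounds $C_p(\mathfrak{R}_1'\circ\mathfrak{R}_1)$ and $C_p(\mathfrak{R}_2\circ\mathfrak{R}_2')$ by $d_{\altmathcal{R}}^p(\pv_1,\pv_2)+d_{\altmathcal{R}}^p(\pv_2,\pv_3)$, this feasible pair for $(\pv_1,\pv_3)$ forces $d_{\altmathcal{R}}^p(\pv_1,\pv_3)\le d_{\altmathcal{R}}^p(\pv_1,\pv_2)+d_{\altmathcal{R}}^p(\pv_2,\pv_3)$.

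Finally, for the zero-distance characterization: if $\pv\sim_{\altmathcal{S}}\pw$ then $\altmathcal{S}(\pv)\precsim\pw$ and $\altmathcal{S}(\pw)\precsim\pv$, so $(\varepsilon,\varepsilon)$ is feasible and $d_{\altmathcal{R}}^p(\pv,\pw)=0$. Conversely, if $\pv\not\sim_{\altmathcal{S}}\pw$ then, say, $\altmathcal{S}(\pv)\not\precsim\pw$, so in any feasible pair $(\mathfrak{R},\mathfrak{R}')$ the sequence $\mathfrak{R}$ must be nonempty (otherwise $\mathfrak{R}(\pv)=\altmathcal{S}(\pv)\precsim\pw$), giving $C_p(\mathfrak{R})=\ell^{1/p}\ge 1$ and hence $d_{\altmathcal{R}}^p(\pv,\pw)\ge 1>0$; the case $\altmathcal{S}(\pw)\not\precsim\pv$ is symmetric. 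Thus $d_{\altmathcal{R}}^p$ is a pseudometric whose zero set on pairs is exactly $\sim_{\altmathcal{S}}$, and the triangle inequality then forces $d_{\altmathcal{R}}^p$ to be constant on products of $\sim_{\altmathcal{S}}$-classes, so it descends to a genuine metric on $\nmod/{\sim_{\altmathcal{S}}}$. I expect the only real obstacle to be keeping the interspersed sanitation maps straight in the triangle inequality — checking that concatenation of reflection sequences corresponds to iterated application once the $\altmathcal{S}\circ\altmathcal{S}=\altmathcal{S}$ collapse is invoked — together with the minor point of confirming the feasible set is nonempty and the minimum attained.
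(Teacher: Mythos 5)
Your proof is correct and follows essentially the same route as the paper's: well-definedness from finiteness, the easy axioms, the triangle inequality via concatenation of reflection sequences using Corollary~\ref{refprecsim}, Proposition~\ref{sanitize_precsim}, and Fact~\ref{costadd}, and the zero-distance characterization via $\sim_{\altmathcal{S}}$. The two small points you flag explicitly --- that the minimum is attained because admissible costs lie in the discrete set $\{0\}\cup\{\ell^{1/p}:\ell\in\N\}$, and that $(\mathfrak{R}'\circ\mathfrak{R})(\pv)=\mathfrak{R}'(\mathfrak{R}(\pv))$ by the idempotence $\altmathcal{S}\circ\altmathcal{S}=\altmathcal{S}$ collapsing the duplicated sanitation at the join --- are left implicit in the paper and are worth making visible, but they tighten rather than change the argument.
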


	\begin{corollary}\label{corollary:distance_is_zero}
		If $\,\pv\in \nmod$ is a direct sum of simple modules then $d_{\altmathcal{R}}^p(\pv,\mathdszero) = 0$.
		\begin{proof}
			We have $\altmathcal{S}(\pv) = \mathdszero \precsim \mathdszero$ and $\altmathcal{S}(\mathdszero) = \mathdszero\precsim \pv$ so that $\pv\sim_{\altmathcal{S}}\mathdszero$ and hence $d_{\altmathcal{R}}^p(\pv,\mathdszero) = 0$ by the second statement of Theorem \ref{thm:ref_is_metric}.
		\end{proof}
	\end{corollary}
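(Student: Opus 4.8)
The plan is to reduce the claim to the characterization of the zero set of $d_{\altmathcal{R}}^p$ established in Theorem~\ref{thm:ref_is_metric}: namely that $d_{\altmathcal{R}}^p(\pv,\pw) = 0$ if and only if $\pv\sim_{\altmathcal{S}}\pw$. Thus it will suffice to verify that $\pv\sim_{\altmathcal{S}}\mathdszero$ whenever $\pv$ is a direct sum of simple modules.

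To do this I would unwind the definition of $\sim_{\altmathcal{S}}$, which requires $\altmathcal{S}(\pv)\precsim\mathdszero$ and $\altmathcal{S}(\mathdszero)\precsim\pv$. The first holds because, by the very definition of the map $\altmathcal{S}$ in Section~\ref{sec:sanitation}, a zigzag module that is a direct sum of simple modules is sent to $\mathdszero$, and $\mathdszero\precsim\mathdszero$ trivially since $\mathdszero\sim\mathdszero$ and $\mathdszero\preceq\mathdszero$. The second holds because $\altmathcal{S}(\mathdszero) = \mathdszero$ and $\mathdszero$ is a summand of any zigzag module (we always have $\pv\cong\pv\oplus\mathdszero$), so $\mathdszero\preceq\pv$ and hence $\mathdszero\precsim\pv$.

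Alternatively, and perhaps more transparently, I would exhibit the witnessing pair directly by taking both sequences of reflection functors to be the empty sequence $\varepsilon$. Then $\varepsilon(\pv) = \altmathcal{S}(\pv) = \mathdszero\precsim\mathdszero$ and $\varepsilon(\mathdszero) = \altmathcal{S}(\mathdszero) = \mathdszero\precsim\pv$, while $C_p(\varepsilon) = 0$; so the minimum in Definition~\ref{reflection_distance} is at most $\max\{0,0\} = 0$, and the non-negativity of $d_{\altmathcal{R}}^p$ already recorded in Theorem~\ref{thm:ref_is_metric} forces equality.

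There is essentially no obstacle here. The only points needing a word of care are confirming that the conventions of Section~\ref{sec:sanitation} indeed give $\altmathcal{S}(\pv) = \mathdszero$ for a direct sum of simple modules (this is explicitly stipulated) and that the empty sequence is a legitimate choice in the minimization of Definition~\ref{reflection_distance} with cost $0$ (it is, by definition of $C_p(\varepsilon)$). Hence the corollary follows immediately from the machinery already in place.
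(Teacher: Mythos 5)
Your first argument is exactly the paper's proof: verify $\pv\sim_{\altmathcal{S}}\mathdszero$ using $\altmathcal{S}(\pv)=\mathdszero$ and then invoke the second statement of Theorem~\ref{thm:ref_is_metric}. Your alternative via the empty sequence $\varepsilon$ is a mild unpacking of the same idea (it essentially re-proves the relevant direction of Theorem~\ref{thm:ref_is_metric} directly from Definition~\ref{reflection_distance}), so both are correct and neither is a genuinely different route.
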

	
	\section{The $\ell^p$-Bottleneck Distance and the Proof of the Main Theorem}\label{sec:d-bottleneck}
	In this section we define a family of metrics on the space of persistence diagrams which we call the $\ell^p$-bottleneck distances. We show that the map which takes a zigzag module to its persistence diagram is 1-Lipschitz with respect to the 1-reflection distance and the $\ell^1$-bottleneck distance.
	\subsection{Matchings}
	Let $S$ and $T$ be sets. A \textit{matching} between $S$ and $T$ is a relation $M\subseteq S\times T$ such that 
	\begin{enumerate}[(1)]
		\item For any $s\in S$, there is at most one $t\in T$ such that $(s,t)\in M$,
		\item For any $t\in T$, there is a most one $s\in S$ such that $(s,t)\in M$.
	\end{enumerate}
	We denote a matching by $M:S\nrightarrow T$.
	Equivalently, a matching is a bijection $M:S'\to T'$ for some subsets $S'\subseteq S$ and $T'	\subseteq T$. From this point of view, $S'$ is called the \textit{coimage} of $M$, denoted $S' = \coim(M)$, and $T'$ is called the \textit{image} of $M$, denoted $T' = \im(M)$. A matching $M$ is said to be \textit{finite} if $|\coim(M)| = |\im(M)|$ is finite. If $(s,t)\in M$ then $s$ and $t$ are said to be \textit{matched}. Points in $S\sqcup T$ which are not matched are said to be \textit{unmatched}.
	
	The following classical theorem from matching theory is crucial to our proof of stability:

	\begin{lemma}[Matching Lemma, \cite{ore1962theory}]\label{matching_lemma}
		Let $S$ and $T$ be sets and let $f:S\nrightarrow T$ and $g:T\nrightarrow S$ be matchings. Then there exists a matching $M:S\nrightarrow T$ such that 
		\begin{enumerate}[label=(\arabic*)]
			\item $\coim(f)\subseteq\coim(M)$,
			\item $ \coim(g)\subseteq\im(M)$,
			\item if $M(s) = t$ then either $f(s) = t$ or $g(t) = s$.
		\end{enumerate} 	
	\end{lemma}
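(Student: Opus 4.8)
The plan is to encode $f$ and $g$ as a bipartite graph $G$ on the vertex set $S\sqcup T$ whose edges are the pairs $\{s,f(s)\}$ for $s\in\coim(f)$ (the \emph{$f$-edges}) and the pairs $\{g(t),t\}$ for $t\in\coim(g)$ (the \emph{$g$-edges}), and to produce $M$ as a suitable sub-collection of vertex-disjoint edges of $G$. First I would record two structural observations. (i) Because $f$ and $g$ are matchings, each vertex of $G$ is incident to at most one $f$-edge and at most one $g$-edge; hence every vertex has degree at most $2$, and when the degree is exactly $2$ the two incident edges are of opposite type. Consequently the connected components of $G$ are (possibly infinite or half-infinite) simple paths and cycles, with edge types alternating between $f$ and $g$ along each path or cycle. (ii) Calling a vertex \emph{required} if it lies in $\coim(f)$ when it belongs to $S$, or in $\coim(g)$ when it belongs to $T$ — these are exactly the vertices that must be covered by $M$ in order to satisfy conditions (1) and (2) — a vertex is required if and only if it is the \emph{source} endpoint of one of the edges incident to it (the $S$-side endpoint of an $f$-edge, or the $T$-side endpoint of a $g$-edge); in particular an isolated vertex of $G$ is never required.

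Next I would build $M$ component by component. Every cycle of $G$ (including a degenerate $2$-cycle formed by an $f$-edge and a $g$-edge on the same pair), every half-infinite or doubly infinite path, and every finite path of odd length possesses a perfect matching; for such a component I put one such perfect matching into $M$, thereby covering all of its vertices and in particular all required ones. The one delicate case is a finite path $v_0-v_1-\cdots-v_m$ of even length $m\ge 2$. Here $v_0$ and $v_m$ lie on the same side of the bipartition, and since the edge types alternate and there is an even number of edges, the unique edge at $v_0$ and the unique edge at $v_m$ are of opposite type; by observation (ii) this forces at least one of $v_0,v_m$ — say $v_0$ — to be non-required. I then place the near-perfect matching $\{v_1v_2,\,v_3v_4,\,\ldots,\,v_{m-1}v_m\}$, which covers every vertex of the path except $v_0$, into $M$.

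Finally I would verify that the resulting $M$ has the claimed properties: it is a union of pairwise vertex-disjoint edges each lying between $S$ and $T$, hence a genuine matching $M:S\nrightarrow T$; every edge of $M$ is an $f$-edge or a $g$-edge, which is precisely condition (3); and by construction every required vertex is covered, giving $\coim(f)\subseteq\coim(M)$ and $\coim(g)\subseteq\im(M)$. The step I expect to be the crux is the even-length-path case, namely the bookkeeping that identifies required vertices with source endpoints together with the parity argument showing that the two endpoints of such a path cannot both be required; everything else is routine. (One may avoid infinite components altogether by noting that in the intended applications $\coim(f)$ and $\coim(g)$ are finite, but the argument above does not need this.)
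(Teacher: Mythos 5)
Your argument is correct, and it is essentially the same decomposition as the paper's, just phrased in graph language. The paper forms the ``orbits'' of elements of $\coim(f)\sqcup\coim(g)$ under the alternating maps $f,g,f^{-1},g^{-1}$, classifies them into five shapes (four kinds of terminating chains plus the periodic/cyclic case), and then assembles $M$ piece by piece from $f$ and $g^{-1}$ according to the shape; your components of the bipartite graph on $S\sqcup T$ with $f$-edges and $g$-edges are exactly these orbits, your cycles are their type~(5), and your odd- versus even-length path split packages the paper's case analysis (in particular, the paper's further splitting of $S_1^{\textup{co}}$ and $T_3^{\textup{co}}$ into a ``\textup{p}'' part is precisely where it handles the one free endpoint of an even path). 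Two small points of genuine difference. First, your parity observation --- that on an even-length path the two end-edges have opposite types, so exactly one endpoint is required --- is a clean replacement for the paper's explicit five-way bookkeeping and the big diagram that defines $M$; it makes it transparent why the near-perfect matching on the path can always be chosen to miss only a non-required vertex. Second, the paper deliberately restricts its appendix proof to \emph{finite} matchings (using finiteness to say each orbit either terminates on both sides or is periodic) and cites Ore for the general case, whereas your path/cycle argument handles half-infinite and doubly infinite components directly, so it proves the lemma as stated without that restriction. Both buy the same theorem for the application; yours is a bit more economical and slightly more general.
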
 \noindent The Matching Lemma holds for arbitrary matchings but, for our purposes, we will only use the Matching Lemma in the case of finite matchings. A proof of the Matching Lemma \ref{matching_lemma} for the case of finite matchings is provided in Appendix \ref{app:matchings}.
	
	\subsection{The $\ell^p$-Bottleneck Distance} Let $S$ and $T$ be multisets of points from $\R^2$ and let $M:S\nrightarrow T$ be a matching. For each $p\in [1,\infty]$ define
	\begin{equation}\label{matching_cost} c_p(M) := \max\left\{\max_{(s,t)\in M}\|s-t\|_{p},\max_{r\in S\sqcup T \textup{ unmatched}}\frac{|r_y-r_x|}{2^{1-1/p}}\right\},\end{equation}
	where $r_x$ and $r_y$ denote the $x$ and $y$ coordinates, respectively, of the point $r\in \R^2$, and where $\|\cdot\|_p:\R^2\to \R$ denotes the usual $\ell^p$-norm on $\R^2$. Here we use the convention $1/\infty = 0$. We then define the \textit{$\ell^p$-bottleneck distance} between $S$ and $T$ by
	\[d_b^p(S,T) : = \inf_{M:S\nrightarrow T}c_p(M),\]
	where the infimum is taken over all matchings between $S$ and $T$.
	
	\begin{remark}\label{remark:db1_bottleneck_equiv}
		The bottleneck distance (as defined in \cite{bauer2014induced}) is just a special case of the $\ell^p$-bottleneck distance when $p = \infty$. It is simply the limit of $d_b^p$ as $p\to \infty$. Since $\|x\|_\infty\leq\|x\|_1\leq 2\|x\|_\infty$ for all $x\in \R^2$, we see that $d_b^\infty(S,T)\leq d_b^1(S,T)\leq 2d_b^\infty(S,T)$ for all multisets $S$ and $T$.
	\end{remark}
	
	Given a multisubset $S$ of points in $\R^2$, and given $p\in [1,\infty]$ and $\eta>0$, we define
	\[S^\eta_p := \left\{\!\!\!\left\{ s\in S \ \Big| \ \frac{|s_y-s_x|}{2^{1-1/p}} >\eta \right\}\!\!\!\right\}.\]
	
	\begin{lemma}\label{matching_lemma_two}
		Fix $p\in[1,\infty]$, let $S$ and $T$ be multisubsets of points in $\R^2$, and let $M:S\nrightarrow T$ be a matching such that 
		\begin{enumerate}[(1)]
			\item $S^\eta_p \subseteq \coim(M)$,
			\item $T^\eta_p \subseteq \im(M)$,
			\item if $M(s) = t$ then $\|s-t\|_p\leq \eta$.
			
		\end{enumerate}
		Then $c_p(M)\leq \eta$.
		\begin{proof}
			Condition (3) guarantees that $\max_{(s,t)\in M}\|s-t\|_p\leq \eta$. If $r\in S\sqcup T$ is unmatched then $r\not\in S^\eta_p\sqcup T^\eta_p$ so that $\frac{|r_y-r_x|}{2^{1-1/p}}\leq \eta$. That $c_p(M)\leq \eta$ now follows from equation (\ref{matching_cost}).
		\end{proof}
	\end{lemma}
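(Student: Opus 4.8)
The plan is to unwind the definition of $c_p(M)$ given in equation~(\ref{matching_cost}): it is the maximum of two quantities, namely $\max_{(s,t)\in M}\|s-t\|_p$ over matched pairs and $\max_{r\in S\sqcup T\text{ unmatched}}\frac{|r_y-r_x|}{2^{1-1/p}}$ over unmatched points, so it suffices to show that each of these two quantities is bounded above by $\eta$.

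The first quantity is handled directly by condition~(3): since $\|s-t\|_p\leq\eta$ whenever $M(s)=t$, the maximum of $\|s-t\|_p$ over all matched pairs is at most $\eta$. For the second quantity I would argue by contraposition using conditions~(1) and~(2). Let $r\in S\sqcup T$ be unmatched. If $r\in S$, then $r\notin\coim(M)$, and since condition~(1) gives $S^\eta_p\subseteq\coim(M)$, it follows that $r\notin S^\eta_p$; by the definition $S^\eta_p=\{\,s\in S \mid \frac{|s_y-s_x|}{2^{1-1/p}}>\eta\,\}$, the statement $r\notin S^\eta_p$ says precisely that $\frac{|r_y-r_x|}{2^{1-1/p}}\leq\eta$. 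The case $r\in T$ is symmetric, using $T^\eta_p\subseteq\im(M)$ from condition~(2). Hence every unmatched point contributes at most $\eta$ to the second maximum.

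Putting the two bounds together, both terms inside the outer maximum in~(\ref{matching_cost}) are at most $\eta$, and therefore $c_p(M)\leq\eta$. I do not expect any real difficulty here; the only points warranting a moment's care will be the orientation of the inequality when passing to the complement of $S^\eta_p$ (its complement consists of exactly those points $r$ with $\frac{|r_y-r_x|}{2^{1-1/p}}\leq\eta$, which is what is needed), and the degenerate cases in which $M$ has no matched pairs or no unmatched points, where the corresponding empty maximum is understood to be $0$, which is $\leq\eta$ since $\eta>0$.
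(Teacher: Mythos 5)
Your proof is correct and follows essentially the same route as the paper: bound the matched-pair term by condition (3), and use conditions (1) and (2) to conclude that any unmatched $r$ lies outside $S^\eta_p\sqcup T^\eta_p$ and hence satisfies $\frac{|r_y-r_x|}{2^{1-1/p}}\leq\eta$. The only small slip is at the end, where you assume $\eta>0$; nothing forces $\eta$ to be strictly positive, but $\eta\geq 0$ suffices for the empty-maximum convention and holds in every application.
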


	\subsection{Stability of Persistence Diagrams with Respect to $d_\altmathcal{R}^1$}
	In this section, we show that the $\ell^1$-bottleneck distance between the persistence diagrams of two given zigzag modules is bounded above by the 1-reflection distance between the zigzag modules themselves.
	\begin{lemma}\label{cost_to_kill}
		Let $\tau\in\altmathcal{T}_n$ and $\pv\in\taumod{\tau}$. If  $\,\mathfrak{R}(\pv) = \mathdszero$ then 
		\[C_1(\mathfrak{R})\geq \displaystyle\max_{(b,d)\in\dgm(\pv)}(d-b).\] Moreover, if $\,\mathfrak{R}(\mathbb{I}([b,d]))=\mathbb{I}([b',d'])$ then \[C_1(\mathfrak{R})\geq |d'-d| + |b'-b|.\]
	\end{lemma}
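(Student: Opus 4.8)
The plan is to prove the ``moreover'' statement by tracking how the support of a single interval module evolves under the reflection functors in $\mathfrak{R}$, and then to reduce the first statement to an analogous tracking argument.

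The first step is to record the \emph{tracking principle} underlying everything: for any interval module $\mathbb{I}([a,c])$ and any reflection functor $\altmathcal{R}_k$ (including the boundary cases $k=1,n$), the module $\altmathcal{R}_k(\mathbb{I}([a,c]))$ is either $\mathdszero$ — and this occurs only when $a=c$ — or else it is an interval module $\mathbb{I}([a',c'])$ with $|a'-a| + |c'-c| \le 1$; in other words a reflection functor moves at most one endpoint of an interval module, and by at most one unit. This is exactly the content of the Diamond Principle, Theorems~\ref{intervalreflections} and~\ref{flow_reflection}, together with their stated analogues for backwards flows and for the indices $1$ and $n$. I would also note that $\altmathcal{S}$ fixes every non-simple interval module and sends every simple one to $\mathdszero$, and that consequently the ``width'' $c-a$ of an interval module changes by at most $1$ under a reflection functor and not at all under $\altmathcal{S}$ (since $|(c'-a')-(c-a)| \le |c'-c| + |a'-a| \le 1$).

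Next I would prove the ``moreover'' statement. Write $\mathfrak{R} = (\altmathcal{R}_{k_\ell},\dots,\altmathcal{R}_{k_1})$, so that $C_1(\mathfrak{R}) = \ell$. Because the output $\mathfrak{R}(\mathbb{I}([b,d])) = \mathbb{I}([b',d'])$ is nonzero and $\mathdszero$ is absorbing, the interval is never annihilated during the computation, so its support traces a sequence $[b,d] = [a_0,c_0], [a_1,c_1], \dots, [a_\ell,c_\ell] = [b',d']$, where $[a_i,c_i]$ is the support after the $i$-th reflection functor (the interleaved copies of $\altmathcal{S}$ act trivially, each $[a_i,c_i]$ being non-simple). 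By the tracking principle $|a_i-a_{i-1}| + |c_i-c_{i-1}| \le 1$, and then telescoping with the triangle inequality yields
\[
|b'-b| + |d'-d| = |a_\ell - a_0| + |c_\ell - c_0| \le \sum_{i=1}^{\ell}\big(|a_i - a_{i-1}| + |c_i - c_{i-1}|\big) \le \ell = C_1(\mathfrak{R}).
\]
For the first statement I would choose $(b,d) \in \dgm(\pv)$ maximizing $d-b$, note that $\mathbb{I}([b,d]) \preceq \pv$, and use that every operation comprising $\mathfrak{R}$ preserves the summand relation — reflection functors by Corollary~\ref{refsummand} and $\altmathcal{S}$ by Proposition~\ref{sanitize_summand} — to conclude $\mathfrak{R}(\mathbb{I}([b,d])) \preceq \mathfrak{R}(\pv) = \mathdszero$, hence $\mathfrak{R}(\mathbb{I}([b,d])) = \mathdszero$. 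Tracking the width again: it starts at $d-b$, must reach $0$ before the module can be killed (only simple interval modules are killed, by either a reflection functor or $\altmathcal{S}$), and drops by at most $1$ per reflection functor while being untouched by $\altmathcal{S}$, so at least $d-b$ reflection functors occur, giving $C_1(\mathfrak{R}) \ge d-b$; taking the maximum over $\dgm(\pv)$ finishes the proof.

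I expect the main obstacle to be purely careful bookkeeping around the interleaved $\altmathcal{S}$ maps and the boundary reflection functors. The $\altmathcal{S}$ maps could a priori kill the tracked interval prematurely, but this cannot happen in the ``moreover'' case since the output is nonzero, and it is harmless in the first case since we only want a lower bound and $\altmathcal{S}$ never contributes to $C_1$. The boundary functors $\altmathcal{R}_1,\altmathcal{R}_n$ are only described implicitly in the text, but the paper explicitly asserts the ``at most one lattice step'' behavior that the tracking principle requires, so no additional computation is needed; I would also confirm directly from the Diamond Principle tables that a reflection functor never annihilates a non-simple interval module, since this is what forces the width to reach $0$ before annihilation.
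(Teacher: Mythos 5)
Your proof is correct and follows essentially the same approach as the paper: both rest on the Diamond Principle computations (Theorems~\ref{intervalreflections} and~\ref{flow_reflection}) to establish that a single reflection moves an interval's endpoint pair by $\ell^1$-distance at most $1$ and kills only simple summands, and both deduce the first claim by observing that $\mathfrak{R}$ must annihilate every interval summand of $\pv$. Where the paper invokes Theorem~\ref{bigsumreflection} (distribution over direct sums) and then asserts the annihilation count somewhat tersely (echoing Remark~\ref{finiteness_remark}), you instead route through the summand-preservation results (Corollary~\ref{refsummand}, Proposition~\ref{sanitize_summand}) and spell out the width-tracking and telescoping explicitly, including careful treatment of the interleaved $\altmathcal{S}$ maps — a more rigorous rendering of the same idea rather than a different route.
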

	\begin{proof}
		By Theorem \ref{bigsumreflection} we have \[\mathdszero  = \mathfrak{R}(\pv)  = \mathfrak{R}\left(\bigoplus_{(b,d)\in\dgm(\pv)}\mathbb{I}_{\tau}([b,d])\right) \cong \bigoplus_{(b,d)\in\dgm(\pv)}\mathfrak{R}(\mathbb{I}_{\tau}([b,d]))\]
		so that $\mathfrak{R}(\mathbb{I}_{\tau}([b,d])) = \mathdszero$ for all $(b,d)\in\dgm(\pv)$. Hence $C_1(\mathfrak{R})\geq d-b$ for all $(b,d)\in \dgm(\pv)$, proving the first claim.
		%
		
		To prove the second claim, suppose that $\mathfrak{R}$ is a sequence of $m$ reflections so that $C_1(\mathfrak{R}) = m$. By Theorems \ref{intervalreflections} and \ref{flow_reflection}, any reflection moves points in corresponding diagram a distance of at most $1$ in the $\ell^1$ norm. Thus if $\mathfrak{R}(\mathbb{I}([b,d]))=\mathbb{I}([b',d'])$ then $(b',d')$ lives in the $m$-ball of $(b,d)$ with respect to the $\ell^1$ norm. Hence 
		\[ \|(b,d)-(b',d')\|_1 = |d'-d| + |b'-b| \leq m = C_1(\mathfrak{R}).\qedhere\]
	\end{proof}
	We can now prove our main theorem.

	\thmmain*

	\begin{proof}
		We show that for any pair of sequences of reflections $(\mathfrak{R}^1,\mathfrak{R}^2)$ with $\mathfrak{R}^1(\pv)\precsim \pw$ and $\mathfrak{R}^2(\pw)\precsim \pv$, there is a matching $M:\dgm(\pw)\nrightarrow \dgm(\pv)$ satisfying $c_1(M) \leq \max\{C_1(\mathfrak{R}^1),C_1(\mathfrak{R}^2)\}$.
		
		Let $\eta = \max\{C_1(\mathfrak{R}^1),C_1(\mathfrak{R}^2)\}$. Consider the multisubsets
		\[\altmathcal{V}_1^\eta = \lms(b,d)\in\dgm(\pv) \ | \ |d-b|>\eta \rms,\] \[\altmathcal{W}_1^\eta = \lms(b',d')\in\dgm(\pw) \ | \ |d'-b'| > \eta\rms\]
		and
		\[\altmathcal{I} = \lms(b,d)\in \dgm(\pv) \ | \ \mathfrak{R}^1(\mathbb{I}([b,d])) \neq \mathdszero \rms,\]
		\[ \altmathcal{J} = \lms(b',d')\in \dgm(\pw) \ | \ \mathfrak{R}^2(\mathbb{I}([b',d'])) \neq  \mathdszero \rms.\]
		Note that by Lemma \ref{cost_to_kill}, $\altmathcal{V}_1^\eta\subseteq \altmathcal{I}$ and $\altmathcal{W}_1^\eta \subseteq \altmathcal{J}$.

		Let $\alpha_1:\altmathcal{I}\to \dgm(\mathfrak{R}^1(\pv))$ and $\alpha_2:\altmathcal{J} \to \dgm(\mathfrak{R}^2(\pw))$ be the injections given by $\alpha_i(b,d) = (b',d')$ if and only if $\mathfrak{R}^i(\mathbb{I}[b,d]) = \mathbb{I}[b',d']$ for each $i\in\{1,2\}$. By Theorem \ref{perssum} and Proposition \ref{simpers}, we have $\dgm(\mathfrak{R}^1(\pv))\subseteq \dgm(\pw)$ and $\dgm(\mathfrak{R}^2(\pw))\subseteq\dgm(\pv)$. Let $j_1$ and $j_2$ denote the respective inclusion maps. Then $M_1 := j_1\circ\alpha_1 :\altmathcal{I}\to \dgm(\pw)$ and $M_2 := j_2\circ\alpha_2 : \altmathcal{J}\to \dgm(\pv)$ are injective and hence can be viewed as matchings $M_1:\dgm(\pv)\nrightarrow\dgm(\pw)$ and $M_2:\dgm(\pw)\nrightarrow\dgm(\pv)$. Note that $\altmathcal{V}_1^\eta\subseteq \coim(M_1)$ and $\altmathcal{W}_1^\eta\subseteq \coim(M_2)$.

		Let $M:\dgm(\pv)\nrightarrow\dgm(\pw)$ be the matching constructed from $M_1$ and $M_2$ provided by the Matching Lemma \ref{matching_lemma}. This matching has the following properties:
		\begin{enumerate}[(1)]
			\item $\coim(M_1)\subseteq \coim(M)$,
			\item $\coim(M_2)\subseteq \im(M)$,
			\item if $M(b,d) = (b',d')$ then either $M_1(b,d) = (b',d')$ or $M_2(b,d) = (b',d')$.
		\end{enumerate}
		In particular, we have $\altmathcal{V}_1^\eta\subseteq \coim(M)$ and $\altmathcal{W}_1^\eta\subseteq \im(M)$. Also, if $M(b,d) = (b',d')$ then either $\mathfrak{R}^1(\mathbb{I}([b,d])) = \mathbb{I}([b',d'])$ or $\mathfrak{R}^2(\mathbb{I}([b,d])) = \mathbb{I}([b',d'])$. By Lemma \ref{cost_to_kill}, $\|(b,d)-(b',d')\|_1\leq \max\{C_1(\mathfrak{R}^1),C_2(\mathfrak{R}^2)\} = \eta$. Hence by Lemma \ref{matching_lemma_two}, $c_1(M)\leq \eta$, completing the proof.
	\end{proof}
	
	
	Since $d_b^\infty\leq d_b^1$, we have the following
	\begin{corollary}\label{bottleneckbound} For any $\pv,\pw\in\nmod$ we have
		$d_{b}^\infty(\dgm(\pv),\dgm(\pw)) \leq d_{\altmathcal{R}}^1(\pv,\pw).$
	\end{corollary}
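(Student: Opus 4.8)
The plan is to unpack the definition of $d_\altmathcal{R}^1$ and, for an arbitrary competitor pair $(\mathfrak{R}^1,\mathfrak{R}^2)$ of reflection sequences with $\mathfrak{R}^1(\pv)\precsim\pw$ and $\mathfrak{R}^2(\pw)\precsim\pv$, to manufacture a matching $M:\dgm(\pv)\nrightarrow\dgm(\pw)$ whose cost satisfies $c_1(M)\le\eta$, where $\eta:=\max\{C_1(\mathfrak{R}^1),C_1(\mathfrak{R}^2)\}$. Taking the infimum over all such pairs then yields $d_b^1(\dgm(\pv),\dgm(\pw))\le d_\altmathcal{R}^1(\pv,\pw)$.

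First I would analyze the action of $\mathfrak{R}^1$ summand-wise. Writing $\pv\cong\bigoplus_{(b,d)\in\dgm(\pv)}\mathbb{I}_\tau([b,d])$ and invoking Theorem \ref{bigsumreflection}, each interval is either annihilated or carried to a genuine interval $\mathbb{I}([b',d'])$; let $\altmathcal{I}\subseteq\dgm(\pv)$ be the set of pairs whose interval is not annihilated, so that $(b,d)\mapsto(b',d')$ is an injection $\altmathcal{I}\to\dgm(\mathfrak{R}^1(\pv))$. Since $\mathfrak{R}^1(\pv)\precsim\pw$, Propositions \ref{simpers} and \ref{perssum} give $\dgm(\mathfrak{R}^1(\pv))\subseteq\dgm(\pw)$, so this injection becomes a matching $M_1:\dgm(\pv)\nrightarrow\dgm(\pw)$ with coimage $\altmathcal{I}$. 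Symmetrically, $\mathfrak{R}^2$ gives a matching $M_2:\dgm(\pw)\nrightarrow\dgm(\pv)$ whose coimage $\altmathcal{J}\subseteq\dgm(\pw)$ is the set of pairs not annihilated by $\mathfrak{R}^2$.

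Next I would apply the Matching Lemma \ref{matching_lemma} to $M_1$ and $M_2$ to obtain a single matching $M:\dgm(\pv)\nrightarrow\dgm(\pw)$ with $\coim(M_1)\subseteq\coim(M)$, $\coim(M_2)\subseteq\im(M)$, and such that any pair matched by $M$ is matched by $M_1$ or by $M_2$. Then I would check the three hypotheses of Lemma \ref{matching_lemma_two} for $M$ at level $\eta$. For hypothesis (1): if $(b,d)\in\dgm(\pv)$ has $|d-b|>\eta\ge C_1(\mathfrak{R}^1)$, the first part of Lemma \ref{cost_to_kill} forces $\mathfrak{R}^1(\mathbb{I}([b,d]))\ne\mathdszero$, hence $(b,d)\in\altmathcal{I}=\coim(M_1)\subseteq\coim(M)$; hypothesis (2) is symmetric via $M_2$. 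For hypothesis (3): if $M(b,d)=(b',d')$ then either $\mathfrak{R}^1(\mathbb{I}([b,d]))=\mathbb{I}([b',d'])$ or $\mathfrak{R}^2(\mathbb{I}([b',d']))=\mathbb{I}([b,d])$, and the second part of Lemma \ref{cost_to_kill} bounds $\|(b,d)-(b',d')\|_1$ by the relevant $C_1(\mathfrak{R}^i)\le\eta$. Lemma \ref{matching_lemma_two} then delivers $c_1(M)\le\eta$, which is what we want.

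I expect the only real obstacle to be bookkeeping rather than a genuine difficulty: one must be careful about which persistence diagram each matching maps between, and one must confirm that the interleaved map $\altmathcal{S}\circ\altmathcal{R}_{k_\ell}\circ\cdots\circ\altmathcal{S}$ really acts on each interval summand in precisely one of the two ways predicted by Theorems \ref{intervalreflections} and \ref{flow_reflection} (annihilation, or translation of the associated lattice point by at most $1$ in the $\ell^1$ norm at each step), with no other behaviour possible. Once that interval-wise picture is secured, the argument is a clean concatenation of the two matching lemmas, and Corollary \ref{bottleneckbound} then follows at once from $d_b^\infty\le d_b^1$.
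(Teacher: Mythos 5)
Your argument is correct, but you have done much more work than the paper does for this corollary. In the paper, Corollary \ref{bottleneckbound} is stated immediately after Theorem \ref{thm:main} and its proof is the one-line observation $d_b^\infty(\dgm(\pv),\dgm(\pw)) \le d_b^1(\dgm(\pv),\dgm(\pw)) \le d_\altmathcal{R}^1(\pv,\pw)$, the first inequality being the $p=\infty$ versus $p=1$ comparison from Remark \ref{remark:db1_bottleneck_equiv} and the second being the Main Theorem. Everything you wrote before the final sentence of your proposal is, in substance, the paper's proof of Theorem \ref{thm:main} itself: the summand-wise analysis of $\mathfrak{R}^1,\mathfrak{R}^2$ via Theorem \ref{bigsumreflection}, the sets $\altmathcal{I},\altmathcal{J}$ of surviving intervals, the use of Proposition \ref{perssum} and Proposition \ref{simpers} to land the injections in the right diagrams, the Matching Lemma \ref{matching_lemma} to splice $M_1$ and $M_2$, and Lemmas \ref{cost_to_kill} and \ref{matching_lemma_two} to control the cost. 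Since the Main Theorem is already available, you should simply cite it rather than re-derive it. One small point in your favour: in hypothesis (3) you correctly state the Matching Lemma's conclusion as ``either $\mathfrak{R}^1(\mathbb{I}([b,d]))=\mathbb{I}([b',d'])$ or $\mathfrak{R}^2(\mathbb{I}([b',d']))=\mathbb{I}([b,d])$''; the paper's printed proof of Theorem \ref{thm:main} contains a typo, writing $M_2(b,d)=(b',d')$ and $\mathfrak{R}^2(\mathbb{I}([b,d]))=\mathbb{I}([b',d'])$ where the arguments should be reversed, so your version is the accurate one (the cost bound from Lemma \ref{cost_to_kill} is symmetric in $(b,d)$ and $(b',d')$, so the conclusion is unaffected either way).
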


	\subsection{Comparison with $d_b^1$ and the Bottleneck Distance}\label{sec:comparison_with_bottleneck}
	The next proposition shows that restricting the reflection distance to $\taumod{\tau}$ for a fixed type $\tau$, we obtain bi-Lipschitz equivalence of $d_{\altmathcal{R}}^1$ and $d_b^1$. First, we need the following lemmas:
	
	\begin{lemma}\label{lem:bound_to_zero}
		Fix $n\in \N$. For $\pv\in\nmod$ we have $d_\altmathcal{R}^1(\pv,\mathdszero)\leq n\cdot {n\choose 2} = n^2(n+1)/2$.
		\begin{proof}
			There are precisely ${n\choose k}$ intervals in $\{1,\dots, n\}$ corresponding to non-simple summands. Note that any sequence of reflections which annihilates an interval summand of $\pv$ annihilates all copies of this summand. Since any interval summand of a zigzag module can be annihilated with no more than $n$ reflections, we have $d_\altmathcal{R}^1(\pv,\mathdszero)\leq n\cdot {n\choose 2}$.
		\end{proof}
	\end{lemma}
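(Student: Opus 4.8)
The plan is to exhibit, starting from $\pv$, a single sequence $\mathfrak{R}$ of at most $n\binom{n}{2}$ reflection functors with $\mathfrak{R}(\pv)=\mathdszero$. Granting this, one pairs $\mathfrak{R}$ with the empty sequence $\varepsilon$, which satisfies $\varepsilon(\mathdszero)=\altmathcal{S}(\mathdszero)=\mathdszero\precsim\pv$ and $C_1(\varepsilon)=0$; hence $d_{\altmathcal{R}}^1(\pv,\mathdszero)\le\max\{C_1(\mathfrak{R}),C_1(\varepsilon)\}=C_1(\mathfrak{R})\le n\binom{n}{2}$, which is the claim.

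To construct $\mathfrak{R}$, I would annihilate the distinct non-simple interval summand types of $\pv$ one batch at a time. Write $\altmathcal{S}(\pv)$ as a direct sum of interval modules $\mathbb{I}_\tau([b,d])$; every interval occurring has $b<d$, and since there are exactly $\binom{n}{2}$ such subintervals of $\{1,\dots,n\}$, at most $\binom{n}{2}$ distinct interval isomorphism types occur. By Remark \ref{finiteness_remark}, for each such $[b,d]$ there is a sequence of at most $d-b\le n-1<n$ reflection functors that sends $\mathbb{I}_\tau([b,d])$ to $\mathdszero$. Using Theorem \ref{bigsumreflection}, together with the facts that $\altmathcal{S}$ commutes with finite direct sums (immediate from its definition; cf.\ Remark \ref{sanitize_alternate_def}) and that reflection functors carry interval modules to interval modules or to $\mathdszero$ (Theorems \ref{intervalreflections} and \ref{flow_reflection}, which persists under composition), applying this length-$\le n$ batch to all of $\pv$ annihilates \emph{every} copy of $\mathbb{I}_\tau([b,d])$ simultaneously, while sending each other interval summand to an interval module or to $\mathdszero$.

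Consequently each batch strictly decreases the number of distinct non-simple interval types present: every summand of the resulting module is the image of a summand of the previous one, the targeted type has image $\mathdszero$, so the surviving non-simple types number at most one fewer. Iterating, after at most $\binom{n}{2}$ batches no non-simple summand remains; since each batch (and hence the full construction) ends with an application of $\altmathcal{S}$, the final module is exactly $\mathdszero$. Concatenating the batches produces the desired $\mathfrak{R}$, whose $1$-cost $C_1(\mathfrak{R})$ equals the total number of reflection functors used, which is at most $n\binom{n}{2}$.

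The one step that needs care is the strict-decrease claim; it rests entirely on the additivity of the reflection functors (Theorem \ref{bigsumreflection}, Corollary \ref{refsummand}) and on the fact that both the reflections and $\altmathcal{S}$ send interval modules to interval modules or to $\mathdszero$ and commute with finite direct sums, so that the count of remaining interval types behaves monotonically under each batch. Everything else is elementary counting and the bookkeeping of the interleaved $\altmathcal{S}$'s.
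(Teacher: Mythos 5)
Your proof is correct and follows essentially the same strategy as the paper's: annihilate the distinct non-simple interval types one at a time, each batch costing at most $n$ reflections and (by additivity of reflections, Theorem \ref{bigsumreflection}) killing all copies of the targeted type at once, with at most $\binom{n}{2}$ batches needed. The only difference is that you spell out explicitly why the iteration terminates after at most $\binom{n}{2}$ batches (the number of surviving distinct non-simple types strictly decreases per batch), a point the paper's terse proof leaves implicit.
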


	\begin{lemma}\label{lem:d_b=0_implies_d_R=0}
		Fix $n\in \N$ and a type $\tau\in\altmathcal{T}_n$. For $\pv,\pw\in\taumod{\tau}$, if $d_b^1(\dgm(\pv),\dgm(\pw)) = 0$ then $d_\altmathcal{R}(\pv,\pw) = 0$. 
		\begin{proof}
			It is not hard to see that if $d_b^1(\dgm(\pv),\dgm(\pw)) = 0$ then 
			\[ \lms(b,d)\in \dgm(\pv) \ | \ b\neq d\rms = \lms(b,d)\in \dgm(\pw) \ | \ b\neq d\rms,\]
			i.e. $\dgm(\pv)$ and $\dgm(\pw)$ have the same multisubsets of off-diagonal points. It follows that $\altmathcal{S}(\pv) = \altmathcal{S}(\pw)$ so that $\epsilon(\pv) = \altmathcal{S}(\pv) = \altmathcal{S}(\pw) \precsim \pw$ and $\epsilon(\pw) = \altmathcal{S}(\pw) = \altmathcal{S}(\pv) \precsim \pv$. Hence $d_\altmathcal{R}^1(\pv,\pw) = 0$.
		\end{proof}
	\end{lemma}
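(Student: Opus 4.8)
The plan is to use the fact that a vanishing $\ell^1$-bottleneck distance forces the off-diagonal parts of the two diagrams to coincide exactly, and then to observe that the reflection distance ignores precisely what $\altmathcal{S}$ throws away, so the empty pair of reflection sequences already witnesses $d_\altmathcal{R}^1(\pv,\pw)=0$. Concretely: show $\altmathcal{S}(\pv)\cong\altmathcal{S}(\pw)$, deduce $\pv\sim_\altmathcal{S}\pw$, and conclude via Definition \ref{reflection_distance} (or the second clause of Theorem \ref{thm:ref_is_metric}).

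First I would make the hypothesis rigid. Since $\pv,\pw\in\taumod{\tau}$ are finite-dimensional of finite length, $\dgm(\pv)$ and $\dgm(\pw)$ are finite multisets, so there are only finitely many matchings $M:\dgm(\pv)\nrightarrow\dgm(\pw)$ and the infimum in the definition of $d_b^1$ is attained by some matching $M$ with $c_1(M)=0$. Reading off the two terms of $c_1$ at $p=1$ (where $2^{1-1/p}=1$): the first forces $s=t$ for every matched pair $(s,t)\in M$, and the second forces $r_x=r_y$ for every unmatched $r\in\dgm(\pv)\sqcup\dgm(\pw)$. Hence every off-diagonal point of $\dgm(\pv)$ is matched to an equal, off-diagonal point of $\dgm(\pw)$ and conversely, and $M$ restricted to off-diagonal points is a multiset bijection realized by the identity, so
\[\lms(b,d)\in\dgm(\pv)\ |\ b\neq d\rms = \lms(b,d)\in\dgm(\pw)\ |\ b\neq d\rms.\]

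Next I would cash this in. By Remark \ref{sanitize_alternate_def}(4), $\dgm(\altmathcal{S}(\pv))$ is exactly the off-diagonal part of $\dgm(\pv)$ and likewise for $\pw$, so these two diagrams are equal; since $\altmathcal{S}$ preserves type, $\altmathcal{S}(\pv)$ and $\altmathcal{S}(\pw)$ both lie in $\taumod{\tau}$, and a $\tau$-module is determined up to isomorphism by its persistence diagram (the remark following \eqref{persdecomp}), so $\altmathcal{S}(\pv)\cong\altmathcal{S}(\pw)$. Combining this with $\altmathcal{S}(\pv)\preceq\pv$ and $\altmathcal{S}(\pw)\preceq\pw$ (Remark \ref{sanitize_alternate_def}(2)) gives $\varepsilon(\pv)=\altmathcal{S}(\pv)\cong\altmathcal{S}(\pw)\preceq\pw$, hence $\varepsilon(\pv)\precsim\pw$, and symmetrically $\varepsilon(\pw)\precsim\pv$; equivalently $\pv\sim_\altmathcal{S}\pw$. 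Since $C_1(\varepsilon)=0$, Definition \ref{reflection_distance} yields $d_\altmathcal{R}^1(\pv,\pw)\leq\max\{C_1(\varepsilon),C_1(\varepsilon)\}=0$. I expect the only non-routine step to be the first one --- upgrading the infimum statement ``$d_b^1=0$'' to an exactly cost-zero matching and thence to exact equality of off-diagonal multisets, which is precisely where finiteness of the diagrams of (tame) zigzag modules is used; everything afterward is bookkeeping with $\altmathcal{S}$, $\precsim$, and $\sim_\altmathcal{S}$.
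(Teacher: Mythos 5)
Your proof is correct and follows the same route as the paper's: conclude from $d_b^1=0$ that the off-diagonal multisets of $\dgm(\pv)$ and $\dgm(\pw)$ coincide, hence $\altmathcal{S}(\pv)\cong\altmathcal{S}(\pw)$, and then note that the empty reflection sequences already witness $d_\altmathcal{R}^1(\pv,\pw)=0$. You merely spell out the step the paper glosses over with ``it is not hard to see'' (finiteness of the diagrams, attainment of the infimum, and reading off $c_1(M)=0$), which is a welcome addition.
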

	\begin{proposition}[Bi-Lipschitz Equivalence of $d_{\altmathcal{R}}^1$ and $d_b^1$ for fixed $n$] \label{prop:db1_bilip}Fix $n\in \N$ and a type $\tau\in\altmathcal{T}_n$. For all zigzag modules $\pv,\pw\in\taumod{\tau}$ we have
		\[ d_b^1(\dgm(\pv),\dgm(\pw))\leq d_{\altmathcal{R}}^1(\pv,\pw)\leq n^2(n+1)\cdot d_b^1(\dgm(\pv),\dgm(\pw)).\]
		\begin{proof} The first of these inequalities is the claim of Theorem \ref{thm:main}. Now let $\eta = d_{\altmathcal{R}}^1(\pv,\pw)$. If $\eta = 0$ then the second inequality holds trivially, so suppose that $\eta>0$. By (the contrapositive of) Lemma \ref{lem:d_b=0_implies_d_R=0}, we have $d_b^1(\dgm(\pv),\dgm(\pw))\geq 1$. By Lemma \ref{lem:bound_to_zero} and the triangle inequality, 
			\[ d_\altmathcal{R}^1(\pv,\pw) \leq d_\altmathcal{R}^1(\pv,\mathdszero) + d_\altmathcal{R}^1(\pw,\mathdszero) \leq n^2(n+1).\]
			Thus $d_\altmathcal{R}^1(\pv,\pw)\leq n^2(n+1)\leq n^2(n+1)\cdot d_b^1(\dgm(\pv),\dgm(\pw))$.
		\end{proof}
	\end{proposition}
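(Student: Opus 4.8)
The plan is to treat the two inequalities separately. The left-hand inequality $d_b^1(\dgm(\pv),\dgm(\pw)) \le d_{\altmathcal{R}}^1(\pv,\pw)$ is precisely the content of Theorem \ref{thm:main}, which holds on all of $\nmod$ and hence in particular for $\pv,\pw \in \taumod{\tau}$, so nothing new is required there. The substance lies in the reverse bound, and the device that makes it go through is the observation that a persistence diagram of a zigzag module is a \emph{finite} multiset of points in $\N\times\N$. Consequently, for any matching $M$ between two such diagrams the cost $c_1(M)$ from (\ref{matching_cost}) --- a maximum of $\ell^1$-distances $\|s-t\|_1$ between lattice points and of the unmatched-point terms $|r_y-r_x|$ (this being $|r_y-r_x|/2^{1-1/p}$ with $p=1$) --- is a nonnegative integer; therefore $d_b^1(\dgm(\pv),\dgm(\pw))$ is a nonnegative integer, so it is either $0$ or at least $1$.

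First I would dispose of the trivial case: if $\eta := d_{\altmathcal{R}}^1(\pv,\pw) = 0$, then the asserted inequality reads $0 \le n^2(n+1)\,d_b^1(\dgm(\pv),\dgm(\pw))$ and holds since the right-hand side is nonnegative. So assume $\eta > 0$. Invoking the contrapositive of Lemma \ref{lem:d_b=0_implies_d_R=0} --- which asserts that $d_b^1(\dgm(\pv),\dgm(\pw)) = 0$ implies $d_{\altmathcal{R}}^1(\pv,\pw) = 0$ --- we obtain $d_b^1(\dgm(\pv),\dgm(\pw)) \ne 0$, and hence $d_b^1(\dgm(\pv),\dgm(\pw)) \ge 1$ by the integrality remark of the previous paragraph.

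It remains to bound $\eta$ by a constant depending only on $n$. Since $d_{\altmathcal{R}}^1$ is a pseudometric by Theorem \ref{thm:ref_is_metric}, the triangle inequality routed through the zero module gives $d_{\altmathcal{R}}^1(\pv,\pw) \le d_{\altmathcal{R}}^1(\pv,\mathdszero) + d_{\altmathcal{R}}^1(\pw,\mathdszero)$, and each summand is at most the bound $n\cdot\binom{n}{2} = n^2(n+1)/2$ furnished by Lemma \ref{lem:bound_to_zero}; hence $\eta \le n^2(n+1)$. Combining this with the previous paragraph yields $d_{\altmathcal{R}}^1(\pv,\pw) = \eta \le n^2(n+1) \le n^2(n+1)\cdot d_b^1(\dgm(\pv),\dgm(\pw))$, which is the desired inequality.

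I do not anticipate a genuine obstacle: all ingredients --- Theorem \ref{thm:main}, Lemmas \ref{lem:bound_to_zero} and \ref{lem:d_b=0_implies_d_R=0}, and the pseudometric property of Theorem \ref{thm:ref_is_metric} --- are already in hand, and the argument is a short assembly. The one point that merits care is using Lemma \ref{lem:d_b=0_implies_d_R=0} in the correct direction: it is its contrapositive, ``$d_{\altmathcal{R}}^1 > 0 \Rightarrow d_b^1 \ge 1$'', that upgrades the uniform upper bound of Lemma \ref{lem:bound_to_zero} into a multiple of $d_b^1$. If a sharper constant were wanted, the place to improve it would be Lemma \ref{lem:bound_to_zero}, whose count bounds the cost of annihilating an entire diagram by killing each of the $\binom{n}{2}$ non-simple interval types separately with at most $n$ reflections --- wasteful, but more than sufficient for the stated non-optimal constant.
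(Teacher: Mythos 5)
Your proposal is correct and follows essentially the same route as the paper's proof: dismiss the $\eta=0$ case, use the contrapositive of Lemma \ref{lem:d_b=0_implies_d_R=0} to get $d_b^1\ge 1$, and then combine the triangle inequality with Lemma \ref{lem:bound_to_zero}. Your added observation that $d_b^1$ is integer-valued (because diagrams live on the integer lattice and $2^{1-1/p}=1$ when $p=1$) makes explicit the step from $d_b^1\ne 0$ to $d_b^1\ge 1$, which the paper leaves implicit.
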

	\begin{corollary}\label{cor:bottleneck_bilip} Fix $n\in \N$ and a type $\tau\in \altmathcal{T}_n$. For any $\pv,\pw\in \taumod{\tau}$ we have
		\[d_{b}^\infty(\dgm(\pv),\dgm(\pw)) \leq d_{\altmathcal{R}}^1(\pv,\pw)\leq 2n^2(n+1)\cdot d_b^\infty(\dgm(\pv),\dgm(\pw)).\]
		\begin{proof}
			This follows immediately from Proposition \ref{prop:db1_bilip} and the equivalence $d_b^\infty\leq d_b^1\leq 2d_b^1$ (see Remark \ref{remark:db1_bottleneck_equiv}).
		\end{proof}
	\end{corollary}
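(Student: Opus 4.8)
The plan is to deduce Corollary~\ref{cor:bottleneck_bilip} by combining Proposition~\ref{prop:db1_bilip} (the bi-Lipschitz equivalence of $d_{\altmathcal{R}}^1$ and $d_b^1$ on a fixed type) with the elementary comparison $d_b^\infty\leq d_b^1\leq 2\,d_b^\infty$ between the $\ell^1$- and $\ell^\infty$-bottleneck distances recorded in Remark~\ref{remark:db1_bottleneck_equiv}. Concretely, I would first recall that for $\pv,\pw\in\taumod{\tau}$ Proposition~\ref{prop:db1_bilip} gives
\[ d_b^1(\dgm(\pv),\dgm(\pw))\leq d_{\altmathcal{R}}^1(\pv,\pw)\leq n^2(n+1)\cdot d_b^1(\dgm(\pv),\dgm(\pw)),\]
while Remark~\ref{remark:db1_bottleneck_equiv} yields $d_b^\infty(\dgm(\pv),\dgm(\pw))\leq d_b^1(\dgm(\pv),\dgm(\pw))\leq 2\,d_b^\infty(\dgm(\pv),\dgm(\pw))$.

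The left-hand inequality of the corollary then follows by transitivity, $d_b^\infty(\dgm(\pv),\dgm(\pw))\leq d_b^1(\dgm(\pv),\dgm(\pw))\leq d_{\altmathcal{R}}^1(\pv,\pw)$; in fact this is already the content of Corollary~\ref{bottleneckbound}. For the right-hand inequality I would chain the other direction, $d_{\altmathcal{R}}^1(\pv,\pw)\leq n^2(n+1)\,d_b^1(\dgm(\pv),\dgm(\pw))\leq n^2(n+1)\cdot 2\,d_b^\infty(\dgm(\pv),\dgm(\pw))=2n^2(n+1)\,d_b^\infty(\dgm(\pv),\dgm(\pw))$. Putting the two chains together gives exactly the stated two-sided estimate.

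I do not expect any genuine obstacle here: the argument is a routine composition of three inequalities already in hand. The only points requiring a little care are bookkeeping---making sure the factor $2$ from the norm comparison lands on the correct side of the estimate, and that $d_b^\infty$ is read as in Remark~\ref{remark:db1_bottleneck_equiv} (equivalently, the standard bottleneck distance of \cite{bauer2014induced}, obtained as the $p\to\infty$ limit of $d_b^p$). It is worth noting that the resulting constant $2n^2(n+1)$ is surely far from optimal, since both Proposition~\ref{prop:db1_bilip} and the norm comparison are coarse, but a sharp constant is unnecessary for the corollary as stated.
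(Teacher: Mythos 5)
Your proof is correct and matches the paper's argument exactly: both combine Proposition~\ref{prop:db1_bilip} with the norm comparison $d_b^\infty\leq d_b^1\leq 2d_b^\infty$ from Remark~\ref{remark:db1_bottleneck_equiv} by simple chaining of inequalities. (You even correct, implicitly, the small typo in the paper's proof, which writes ``$d_b^\infty\leq d_b^1\leq 2d_b^1$'' where $2d_b^\infty$ is clearly intended.)
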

	\begin{remark}
		Bi-Lipschitz equivalence does not hold in general on $\nmod$. To see this, consider any two zigzag modules $\pv,\pw\in \nmod$ for which $\dgm(\pv) = \dgm(\pw)$ but $\pv\not\sim_\altmathcal{S}\pw$. Then $d_b^1(\dgm(\pv),\dgm(\pw)) = 0$ but $d_\altmathcal{R}^1(\pv,\pw)>0$ by the second statement of Theorem \ref{thm:ref_is_metric}. For example, let $n = 3$, let $\tau = (\rightarrow,\rightarrow)$, and let $\tau' = (\rightarrow,\leftarrow)$. Define
		\[\pv = \mathbb{I}_\tau([1,2]) \oplus \mathbb{I}_{\tau}([2,3]) \quad \textup{ and } \quad \pw = \mathbb{I}_{\tau'}([1,2]) \oplus \mathbb{I}_{\tau'}([2,3]).\]
		By definition $\dgm(\pv) = \dgm(\pw)$. On the other hand, we have $\altmathcal{S}(\pv) = \pv$ and $\altmathcal{S}(\pw) = \pw$ so that $\pv \sim_{\altmathcal{S}} \pw$ if and only if $\,\pv\precsim \pw$ and $\pw \precsim \pv$ which is true if and only if $\,\pv \sim \pw$. But $\pv\not\sim \pw$, as none of the structure maps of $\pv$ nor $\pw$ are isomorphisms. Thus $\pv\not\sim_{\altmathcal{S}}\pw$.
	\end{remark}
	
	\subsection{Comparison with Botnan and Lesnick's Distance}
	Botnan and Lesnick introduced a distance between zigzag modules in \cite{algstabzz}. Their distance is defined by considering extensions of zigzag modules to so-called \textit{block decomposable}, two-dimensional persistence modules. Their distance, denoted $d_I$, is defined to be the two-dimensional interleaving distance between these extensions. In this section we consider examples which show that in general there is not a bi-Lipschitz equivalence between $d_I$ and $d_\altmathcal{R}^1$. We refer the reader to \cite[Definition 4.4]{algstabzz} for details on $d_I$.
	
	\begin{example} Let $n = 3$ and let $\tau = (\leftarrow,\rightarrow)\in\altmathcal{T}_n$. Consider the interval modules $\pv:= \mathbb{I}_\tau([1,3])$. We have $d_\altmathcal{R}^1(\pv,\mathdszero) = 2$ by Remark \ref{finiteness_remark}. On the other hand, we can view $\pv$ as the zigzag module
		\[\widetilde{\pv} = \begin{tikzcd}[column sep=2em]
		\cdots &\arrow[l,swap,"0"]0 \arrow[r,"0"]& \mathbb{F}& \mathbb{F} \arrow[r,"\textup{id}"]\arrow[l,swap,"\textup{id}"]& \mathbb{F}  & 0 \arrow[l,swap,"0"]\arrow[r,"0"]& \cdots,
		\end{tikzcd}\]
		a zigzag module indexed by the integers obtained from $\pv$ by extending with $0$. The block extension functor of Botnan and Lesnick sends $\widetilde{\pv}$ to an unbounded block in $\R^2$ and hence $d_I(\widetilde{\pv},\mathdszero) = \infty$ (see Figure 4 in \cite{algstabzz}). 
	\end{example}
	
	The next example gives a sequence $(\pv_n)_{n\in\N}$ of zigzag modules whose reflection distance to the zero module increases linearly but whose $d_I$ distance to the zero module is constant:
	
	\begin{example}
		For each $n\in \N$ let $\pv_n$ be the zigzag module
		\[\pv_n = \begin{tikzcd}[column sep=2em]
		\mathbb{F} &\arrow[l,swap,"\textup{id}"]\mathbb{F} \arrow[r,"0"]& 0& 0 \arrow[r,"0"]\arrow[l,swap,"0"]& \cdots\arrow[r,"0"]& \mathbb{F} &\arrow[l,swap,"\textup{id}"]\mathbb{F} \arrow[r,"0"]& 0& 0 \arrow[l,swap,"0"]
		\end{tikzcd}\]
		of length $4n$ ($\,\pv_n$ is the ``concatenation" of $n$ copies of $\,\mathbb{F} \stackrel{\textup{id}}{\longleftarrow} \mathbb{F}  \stackrel{0}{\longrightarrow} 0 \stackrel{0}{\longleftarrow}0$).
		It is not hard to see that $d_\altmathcal{R}^1(\pv_n,\mathdszero) = n$. On the other hand, extending each $\pv_n$ by 0 to obtain zigzag modules $\widetilde{V}_n$ indexed over the integers, we have $d_I(\widetilde{V}_n,\mathdszero) = 1/2$ for all $n$. To see this, note that the extension of $\widetilde{V}_n$ to a two-dimensional persistence module is a direct sum of block modules $I^{[1,2)_{\textup{BL}}},I^{[3,4)_{\textup{BL}}},\dots, I^{[2n-1,2n)_\textup{BL}}$ (see Section 4.1 of \cite{algstabzz} for notation). These block modules are infinite horizontal strips of height $1$ with vertical distance $1$ between adjacent blocks (see again Figure 4 from \cite{algstabzz}; $I^{[k,k+1)_\textup{BL}}$ is a vertical shift by $k$ of the block $I^{[1,2)_\textup{BL}}$). The zero morphism then serves as a $1/2$-interleaving between this direct sum of block modules and the zero module. Moreover, after a moment of reflection one sees that $1/2$ is the smallest possible interleaving constant.
	\end{example}

	\section{Discussion}\label{sec:discussion}

	Our definition of the reflection distance made  use of the notion of reflection functors introduced by Gelfand et al \cite{bgp} which are transformations on zigzag modules that affect only a portion a given module. In our constructions, the effect of these transformations was restricted to a subdiagram of length at most $3$ (cf. Section \ref{sec:reflections}, equation (\ref{subdiagram})). This design choice could also potentially be altered, and its exploration may lead to other interesting distances. In particular, it is conceivable that similar ideas can be exported to the setting of persistence modules over graphs other than those inducing zigzag modules.
	
	Indeed, since Gelfand et al consider reflection functors on arbitrary graphs, the possibility of extending our reflection distance to such general setting appears interesting.  One initial question arising from this is whether by applying a suitable sequence of reflections to a given persistence module defined on a graph, one can transform it into a summand of another such persistence module. Whereas we were able to show that this is always possible for zigzag modules,  it is not clear that this can be done for persistence modules defined on arbitrary graphs. 
	
	Our construction of a distance between zigzag modules took a  route different from the one followed by Botnan and Lesnick in \cite{algstabzz}. We showed that these two distances are in general not bi-Lipschitz equivalent. It seems of interest to identify large families of zigzag persistence modules for which a bi-Lipschitz equivalence might be possible.
	
	Another topic where research would be welcome is the elucidation of the computational complexity associated to estimating  distances between zigzag modules of a given length.  In this direction, recent results by Botnan, Bjerkevik, and Kerber \cite{kerber} about the computational complexity of the two-dimensional interleaving distance might be relevant.
	

	\subsection*{Acknowledgements}
	We thank Peter Bubenik, Mike Catanzaro, Woojin Kim, and Jos\'e Perea for their useful feedback. We also thank Lauren Wickman for her careful reading of the late drafts. A.E. was supported by the NSF RTG \# 1547357. F.M. was supported by NSF RI \# 1422400 and  NSF AF \#1526513.
	\bibliographystyle{alpha}
	\bibliography{bibfile}{}
	
	\appendix
	\appendixpage
	\addappheadtotoc
	\renewcommand{\thesection}{A\arabic{section}}

	\section{Limits and Colimits}\label{app:cats}
	We assume the reader is familiar with (small) categories, functors, and natural transformations (see \cite{categories}, \cite{riehl2017category} for details). We denote by $\mathbf{Vect}$ the category of vector spaces over some fixed field $\mathbb{F}$.
	\begin{definition}
		Fix a small category $\altmathcal{J}$.
		\begin{enumerate}
			\item 
			A \textit{diagram of vector spaces of shape} $\altmathcal{J}$ is a functor $D:\altmathcal{J}\to \textbf{Vect}$. 
			\item A \textit{cone} over a diagram $D:\altmathcal{J}\to \textbf{Vect}$ is a vector space $N$ together with a collection of linear transformations $\lambda :=\{\lambda_J: N\to D(J)\ | \ J\in\textup{Ob}(\altmathcal{J})\}$, indexed by the objects of $\altmathcal{J}$, such that for any morphism $f:A\to B$ in $\textup{Hom}(\altmathcal{J})$ we have $\lambda_B = D(f)\circ \lambda_A$. We denote such a cone by $(N,\lambda)$.
			\item A \textit{cocone} over a diagram $D:\altmathcal{J}\to \textbf{Vect}$ is a vector space $M$ together with a collection of linear transformations $\gamma :=\{\gamma_J: D(J)\to M \ | \ J\in\textup{Ob}(\altmathcal{J})\}$, indexed by the objects of $\altmathcal{J}$, such that for any morphism $f:A\to B$ in $\textup{Hom}(\altmathcal{J})$ we have $\gamma_A = \gamma_B\circ D(f)$. We denote such a cocone by $(M,\lambda)$.
		\end{enumerate}
	\end{definition}
	Limits and colimits are then \textit{universal cones} or \textit{cocones}, respectively:
	\begin{definition} Let $D:\altmathcal{J}\to\mathbf{Vect}$ be a diagram of vector spaces.
		\begin{enumerate}
			\item The \textit{limit} of the diagram $D$ is a cone $(L,\phi)$ over $D$ such that for any other cone $(N,\lambda)$ over $D$, there exists a unique linear transformation $\psi:N\to L$ with $\lambda_A = \phi_A \circ \psi$ for all $A\in \textup{Ob}(\altmathcal{J})$. We denote the limit $L$ by $\,\lim D$.
			\item The \textit{colimit} of the diagram $D$ is a cocone $(C,\phi)$ over $D$ such that for any other cocone $(M,\lambda)$ over $D$, there exists a unique linear transformation $\psi:M\to C$ with $\lambda_A = \phi_A \circ \psi$ for all $A\in \textup{Ob}(\altmathcal{J})$. We denote the colimit $C$ by $\colim(D)$.
		\end{enumerate}
	\end{definition}			
	
	\begin{theorem}\label{mono_diagrams_mono_lim} Let $\altmathcal{J}$ be a small category and let $D^1,D^2$ be diagrams of vector spaces. If there exists a natural transformation $\eta:D^1\rightarrow D^2$ all of whose components are monomorphisms, then $(L^1,\eta_J\circ \lambda^1_J)$ is a cone for $D^2$ and the unique morphism $\psi:L^1\to L^2$ satisfying $\eta_J\circ \lambda^1_J = \lambda^2_J\circ\psi$ for all $J\in\altmathcal{J}$ is a monomorphism.
		\begin{proof}
			The proof can be found in more generality in \cite{borceux1994handbook} pg. 89, Corollary 2.15.3.
		\end{proof}
	\end{theorem}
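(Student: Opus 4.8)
The plan is to split the statement into its two halves and obtain each directly from the universal properties involved, working entirely at the level of arrows. First I would check that $(L^1,\eta_J\circ\lambda^1_J)$ is a cone over $D^2$: for a morphism $f:A\to B$ of $\altmathcal{J}$, naturality of $\eta$ gives $\eta_B\circ D^1(f)=D^2(f)\circ\eta_A$, while the cone condition for $(L^1,\lambda^1_J)$ over $D^1$ gives $\lambda^1_B=D^1(f)\circ\lambda^1_A$. Composing these, $\eta_B\circ\lambda^1_B=\eta_B\circ D^1(f)\circ\lambda^1_A=D^2(f)\circ\eta_A\circ\lambda^1_A$, which is precisely the cone condition for $(L^1,\eta_J\circ\lambda^1_J)$ over $D^2$. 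The universal property of $(L^2,\lambda^2_J)$ then furnishes the unique $\psi:L^1\to L^2$ with $\lambda^2_J\circ\psi=\eta_J\circ\lambda^1_J$ for every $J$.

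Next I would show $\psi$ is a monomorphism. The key observation is that the legs of a limit cone are jointly monic: if $g,h:N\to L^1$ satisfy $\lambda^1_J\circ g=\lambda^1_J\circ h$ for all $J$, then $g$ and $h$ are both mediating morphisms into $L^1$ for the cone $(N,\lambda^1_J\circ g)$ over $D^1$, so $g=h$ by the uniqueness clause of the universal property. Now suppose $g,h:N\to L^1$ with $\psi\circ g=\psi\circ h$. Postcomposing with $\lambda^2_J$ and using $\lambda^2_J\circ\psi=\eta_J\circ\lambda^1_J$ yields $\eta_J\circ\lambda^1_J\circ g=\eta_J\circ\lambda^1_J\circ h$ for each $J$; since every $\eta_J$ is a monomorphism, this gives $\lambda^1_J\circ g=\lambda^1_J\circ h$ for all $J$, hence $g=h$ by joint monicity. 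Thus $\psi$ is left-cancellable, i.e.\ a monomorphism; in $\mathbf{Vect}$ this coincides with injectivity, which is the form in which the theorem is invoked in Proposition \ref{refsub}.

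I do not anticipate any real obstacle here: the argument is purely formal and uses only the defining universal properties of limits together with the fact that monomorphisms compose and are left-cancellable. The single point worth spelling out is the joint monicity of the limit legs, since that is exactly what lets one "descend" the cancellation across the $\eta_J$'s down to $\psi$ itself. Specializing to $\mathbf{vect}_{\mathbb{F}}$ and to the three-object shape categories underlying the reflection functors of Section \ref{sec:reflections} requires no additional work; alternatively, one may simply cite the cited reference \cite{borceux1994handbook} for the general statement.
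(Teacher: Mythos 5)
Your argument is correct and complete. The paper itself gives no proof, deferring to Borceux's Handbook; you have supplied exactly the self-contained argument that citation stands in for. The two halves are handled cleanly: the cone verification follows from naturality of $\eta$ together with the cone condition on $(L^1,\lambda^1_J)$, and the monomorphism claim reduces to the joint monicity of the legs of a limit cone combined with left-cancellability of each $\eta_J$. The one idea that does the real work — that the limit legs are jointly monic, which is what lets you "strip" the $\eta_J$'s and then collapse to $\psi$ — is correctly identified and correctly justified via the uniqueness clause of the universal property. Nothing is missing; this is a fine replacement for the citation.
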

	
	Let $\mathbf{2}$ denote the discrete category with 2 objects. That is, $\textup{Ob}(\mathbf{2}) = \{1,2\}$ and $\textup{Hom}(1,2) = \emptyset$.
	\begin{definition}
		Let $X,Y$ be objects in the category $\,\altmathcal{C}$ and let $D:\mathbf{2}\to\altmathcal{C}$ be the diagram given by $D(1) = X$ and $D(2) = Y$.
		\begin{enumerate}
			\item The \textit{product} of $B$ and $C$, denoted $B\times C$, is the limit of $D$, if it exists,
			\item The \textit{coproduct} of $B$ and $C$, denoted $B\amalg C$, is the colimit of $D$, if it exists.
		\end{enumerate}
	\end{definition}
	In $\mathbf{Vect}$, products and coproducts always exist and coincide.
	
	\begin{theorem}\label{lim_prod_is_prod_lim} Let $\altmathcal{J}$ be a small category and let $D^1, D^2$ be diagrams of vector spaces. Then $\lim(D^1\times D^2)\cong \lim (D^1)\times \lim (D^2)$ Dually, $\colim(D^1\amalg D^2)\cong \colim(D^1)\amalg \colim(D^2)$. Here $D^1\times D^2$ and $D^1\amalg D^2$ denote the product and coproduct, respectively, of $D^1$ and $D^2$ in the functor category $\altmathcal{C}^\altmathcal{J}$.
		\begin{proof} Consider the product category $\mathbf{2}\times J$ and let $F:\mathbf{2}\times \altmathcal{J}\to\altmathcal{C}$ be the functor given by $F(i,j) = D^i(j)$ for $i = 1,2$ and $j\in\altmathcal{J}$. By commutativity of limits,
			\[\lim_\mathbf{2}\lim_\altmathcal{J} F(i,j) = \lim_\altmathcal{J}\lim_\mathbf{2}F(i,j)\]
			(see \cite{riehl2017category} pg. 111, Theorem 3.8.1). Now 
			\begin{align*}
			\lim(D^1\times D^2) &= \lim_{\altmathcal{J}}(D^1\times D^2) = \lim_{\altmathcal{J}}\lim_\mathbf{2} F(i,j)\\
			& = \lim_{\mathbf{2}}\lim_\altmathcal{J} F(i,j) = \lim_{\mathbf{2}}\lim D^i\\
			& = \lim(D^1)\times \lim(D^2).
			\end{align*}
			The second statement follows by a duality argument.
		\end{proof}
	\end{theorem}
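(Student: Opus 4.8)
The plan is to reduce the statement to the interchange (Fubini) theorem for iterated limits, which lets a limit over a product category be computed in either coordinate order. First I would form the product category $\mathbf{2}\times\altmathcal{J}$, where $\mathbf{2}$ is the two-object discrete category, together with the functor $F\colon\mathbf{2}\times\altmathcal{J}\to\altmathcal{C}$ with $F(1,j)=D^1(j)$, $F(2,j)=D^2(j)$, acting on morphisms through $D^1$ and $D^2$ respectively (there being no nonidentity morphisms in the $\mathbf{2}$-direction). The structural fact I would invoke is that limits and colimits in the functor category $\altmathcal{C}^{\altmathcal{J}}$ are computed pointwise whenever $\altmathcal{C}$ is complete and cocomplete — in particular for $\altmathcal{C}=\mathbf{Vect}$ — so that $(D^1\times D^2)(j)\cong D^1(j)\times D^2(j)$ for every object $j$; that is, $D^1\times D^2$ is exactly the object-wise limit of $F$ in the $\mathbf{2}$ coordinate.

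Next I would run the interchange theorem in both directions. Computing the limit over $\altmathcal{J}$ first gives, for fixed $i$, $\lim_{\altmathcal{J}}F(i,-)=\lim D^i$; since $i\mapsto\lim D^i$ is just the two-object diagram with values $\lim D^1$ and $\lim D^2$, its limit over $\mathbf{2}$ is $\lim D^1\times\lim D^2$, so $\lim_{\mathbf 2}\lim_{\altmathcal{J}}F\cong\lim D^1\times\lim D^2$. Computing the limit over $\mathbf{2}$ first gives $\lim_{\mathbf 2}F(-,j)\cong(D^1\times D^2)(j)$ by the pointwise description, whence $\lim_{\altmathcal{J}}\lim_{\mathbf 2}F\cong\lim(D^1\times D^2)$. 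The Fubini theorem (see \cite{riehl2017category}) supplies the natural isomorphism $\lim_{\mathbf 2}\lim_{\altmathcal{J}}F\cong\lim_{\mathbf 2\times\altmathcal{J}}F\cong\lim_{\altmathcal{J}}\lim_{\mathbf 2}F$; chaining the three isomorphisms yields $\lim(D^1\times D^2)\cong\lim D^1\times\lim D^2$. The colimit statement then follows formally by passing to $\altmathcal{C}^{\mathrm{op}}$, under which coproducts of diagrams become products and colimits become limits, so no separate argument is required.

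As an alternative, more hands-on route I could instead verify the universal property directly: a cone over $D^1\times D^2$ with apex $N$ amounts to a pair consisting of a cone over $D^1$ and a cone over $D^2$, each with apex $N$, because a morphism into a pointwise product decomposes as a pair of morphisms and the cone-compatibility conditions decouple componentwise. By the universal properties of $\lim D^1$ and $\lim D^2$ this data corresponds bijectively and naturally to a single morphism $N\to\lim D^1\times\lim D^2$, which is precisely the universal property pinning down $\lim(D^1\times D^2)$; uniqueness of limits up to canonical isomorphism then finishes.

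The one place I would be most careful is the justification of the pointwise computation of limits and colimits in $\altmathcal{C}^{\altmathcal{J}}$ and the applicability of the interchange theorem; both are standard given that $\mathbf{Vect}$ is complete and cocomplete (and the interchange of limits is just the observation that a limit over $\mathbf{2}\times\altmathcal{J}$ can be assembled one coordinate at a time). Beyond that the argument is purely formal diagram-chasing with no genuine obstacle; the fact that finite products and coproducts coincide in $\mathbf{Vect}$ is a convenience for the applications but is not needed for the isomorphism itself.
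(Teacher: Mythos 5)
Your main argument is the same as the paper's: form the product category $\mathbf{2}\times\altmathcal{J}$, define $F(i,j)=D^i(j)$, and invoke the Fubini/interchange theorem for iterated limits to identify $\lim_{\altmathcal{J}}\lim_{\mathbf 2}F$ with $\lim_{\mathbf 2}\lim_{\altmathcal{J}}F$, deducing the colimit statement by duality. You are more explicit than the paper about the pointwise computation of limits in the functor category, and you also sketch a direct universal-property verification as a fallback, but the core route and the key lemma invoked are the same.
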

	
	
	\section{Matchings}\label{app:matchings}
	The Matching Lemma is a useful tool for combining matchings in opposite directions into a single matching. For us, the lemma was crucial for proving our stability result, while Bjerkevik makes use of the lemma in proving his generalization of the algebraic stability theorem \cite{bjerkevik}. While it may be one of the earliest results in infinite matching theory \cite{AHARONI19915}, we believe its applications to stability-type questions in persistence theory make it worth expounding upon here.
	\begin{lemma}[Matching Lemma for Finite Matchings]
		Let $S$ and $T$ be sets and let $f:S\nrightarrow T$ and $g:T\nrightarrow S$ be finite matchings. Then there exists a matching $M:S\nrightarrow T$ such that 
		\begin{enumerate}[label=(\arabic*)]
			\item $\coim(f)\subseteq\coim(M)$,
			\item $ \coim(g)\subseteq\im(M)$,
			\item if $M(s) = t$ then either $f(s) = t$ or $g(t) = s$.
		\end{enumerate} 	
		\begin{proof} The proof is inspired by a proof of the Cantor–Schr\"{o}der–Bernstein theorem \footnote{The infinite version of the Matching Lemma holds as well \cite{ore1962theory} and actually generalizes the  Cantor–Schr\"{o}der–Bernstein (CSB) theorem slightly; the CBS theorem says that if $f:S\to T$ and $g:T\to S$ are injections between sets $S$ and $T$ then there exists a bijection $M:S\to T$. Since injective maps can be viewed as matchings with coimages being equal to the domain of the map, the infinite Matching Lemma implies the existence of a matching $M:S\nrightarrow T$ with $\coim(M) = \coim(f) = S$ and $\im(M) = \coim(g) = T$, i.e. a bijection between $S$ and $T$.}
			given in \cite{CSB}. Let $s\in \coim(f)$, $t\in \coim(g)$, and consider sequences of the form
			\[ \cdots \stackrel{g}{\longrightarrow}f^{-1}(g^{-1}(s))\stackrel{f}{\longrightarrow} g^{-1}(s)\stackrel{g}{\longrightarrow}s\stackrel{f}{\longrightarrow} f(s) \stackrel{g}{\longrightarrow} g(f(s)) \stackrel{f}{\longrightarrow} \cdots \]
			and
			\[ \cdots \stackrel{f}{\longrightarrow}g^{-1}(f^{-1}(t))\stackrel{g}{\longrightarrow} f^{-1}(t)\stackrel{f}{\longrightarrow}t\stackrel{g}{\longrightarrow} g(t) \stackrel{f}{\longrightarrow} f(g(t)) \stackrel{g}{\longrightarrow} \cdots, \]
			where we allow these sequences to terminate to the right or left when undefined. We refer to these sequences as the \textit{orbits} of $s$ or $t$. Since $f$ and $g$ are finite matchings, such sequences either terminate on both the left and right, or are infinite but periodic. By injectivity of $f$ and $g$, every element of $\coim(f)\sqcup\coim(g)$ appears in exactly one orbit. Moreover, every orbit falls into one of the following five classes:
			\begin{enumerate}
				\item $s\rightarrow t\rightarrow \cdots \rightarrow s\rightarrow y$,
				\item $s\rightarrow t\rightarrow \cdots \rightarrow s\rightarrow t\rightarrow x$,
				\item $t\rightarrow s\rightarrow \cdots \rightarrow t\rightarrow x$,
				\item $t\rightarrow s\rightarrow \cdots \rightarrow t\rightarrow s\rightarrow y$,
				\item 
				$\begin{tikzcd}[row sep=1em,column sep=1em]
				s \arrow[r] & t \arrow[r]&s \arrow[r] & t \arrow[r]& s \arrow[d]\\
				t \arrow[u] &&&& t \arrow[d]\\
				s \arrow[u] & t \arrow[l]&\cdots \arrow[l] & t \arrow[l]& s \arrow[l] 
				\end{tikzcd}$
			\end{enumerate}
			where the $s$'s and $t$'s represent elements of $\coim(f)$ and $\coim(g)$, respectively, arrows represent either $f$ or $g$, and $y$'s and $x$'s represent elements of $\im(f)\setminus \coim(g)$ and $\im(g)\setminus \coim(f)$, respectively. We define a matching $M:S\nrightarrow T$ as follows: for each $i = 1,\hdots,5$ let
			\[ S_i^{\textup{co}}:= \{s\in \coim(f) \ | \ \textup{$s$ appears in an orbit of type $i$}\}\]
			and 
			\[ T_i^{\textup{co}}:= \{t\in \coim(g) \ | \ \textup{$t$ appears in an orbit of type $i$}\}.\]
			Then $\coim(f) = \bigcupdot S_i^{\textup{co}}$ and $\coim(g) = \bigcupdot T_i^{\textup{co}}$. We partition $S_1^{\textup{co}}$ and $T_3^{\textup{co}}$ further by defining
			\[ S_1^{\textup{co$\backslash$p}} : = \{s\in S_1^{\textup{co}} \ | \ f(s)\in\coim(g)\}, \quad S_1^{\textup{p}} : = \{s\in S_1^{\textup{co}} \ | \ f(s)\not\in\coim(g)\},\]
			\[ T_3^{\textup{co$\backslash$p}} : = \{t\in T_3^{\textup{co}} \ | \ g(t)\in\coim(f)\}, \quad \textup{ and } \quad T_3^{\textup{p}} : = \{t\in T_3^{\textup{co}}\ | \ g(t)\not\in\coim(f)\},\]
			so that $S_1 = S_1^{\textup{co$\backslash$p}} \cupdot S_1^{\textup{p}}$ and $T_3 = T_3^{\textup{co$\backslash$p}} \cupdot T_3^{\textup{p}}$. The image, coimage, and mapping of $M$ is specified by the diagram			
			\tikzset{%
				symbol/.style={%
					draw=none,
					every to/.append style={%
						edge node={node [sloped, allow upside down, auto=false]{$#1$}}}
				}
			}
			\[	\newcommand{\defequals}{:=}
			\begin{tikzcd}[row sep=2em,column sep=1.9em]
			\coim(M)\arrow[d,"M"] \arrow[symbol=\defequals]{r}&S_1^{\textup{co$\backslash$p}}\arrow[symbol=\bigcupdot]{r} \arrow[d,"f"]&S_2^{\textup{co}}\arrow[symbol=\bigcupdot]{r}\arrow[d,"f"]&S_3^{\textup{co}}\arrow[symbol=\bigcupdot]{r}\arrow[d,"g^{-1}"]&S_4^{\textup{co}}\arrow[symbol=\bigcupdot]{r}\arrow[d,"g^{-1}"]&S_5^{\textup{co}}\arrow[symbol=\bigcupdot]{r}\arrow[d,"f"]&S_1^{\textup{p}}\arrow[symbol=\bigcupdot]{r}\arrow[dr,"f",swap,pos=0]&g(T_3^{\textup{p}})\arrow[dl,"g^{-1}",crossing over,pos= 0]\\
			\im(M) \arrow[symbol=\defequals]{r}&T_1^{\textup{co}}\arrow[symbol=\bigcupdot]{r} &T_2^{\textup{co}}\arrow[symbol=\bigcupdot]{r}&T_3^{\textup{co$\backslash$p}}\arrow[symbol=\bigcupdot]{r}&T_4^{\textup{co}}\arrow[symbol=\bigcupdot]{r}&T_5^{\textup{co}}\arrow[symbol=\bigcupdot]{r}&T_3^{\textup{p}}\arrow[symbol=\bigcupdot]{r}&f(S_1^{\textup{p}})
			\end{tikzcd}
			\]
			\hfill
			
			Note that $f(S_1^{\textup{co$\backslash$p}}) = T_1^{\textup{co}}$, $f(S_2^{\textup{co}}) = T_2^{\textup{co}}$, $g(T_3^{\textup{co$\backslash$p}}) = S_3^{\textup{co}} $, $g(T_4^{\textup{co}}) = S_4^{\textup{co}}$, and $f(S_5^{\textup{co}}) = T_5^{\textup{co}}$ so that $M$ is surjective. Since $f$ and $g$ are injective, we see that $M$ is a bijection between each of the parts specified and thus defines a matching. Moreover, $\coim(M) = \coim(f)\cupdot g(T_3^{\textup{p}})\supseteq \coim(f)$ and $\im(M) = \coim(g)\cupdot f(S_1^{\textup{p}})\supseteq \coim(g)$. Property (3) evidently holds by the definition of $M$.
		\end{proof}
	\end{lemma}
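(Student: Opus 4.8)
The plan is to adapt the standard orbit-based proof of the Cantor--Schr\"{o}der--Bernstein theorem: partition the relevant elements of $S$ and $T$ into \emph{orbits} under the partial maps $f$, $g$ and their partial inverses, and then define $M$ one orbit at a time.

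First I would define, for each $s\in\coim(f)$ and each $t\in\coim(g)$, the orbit through that element: the maximal alternating sequence obtained by applying $f,g,f,g,\dots$ moving right and $g^{-1},f^{-1},g^{-1},\dots$ moving left, stopping in each direction as soon as the next map is undefined. Since $f$ and $g$ are \emph{finite} matchings --- hence injective, with finite coimage --- every such orbit is either a finite path or a finite cycle, and by injectivity of $f$ and $g$ each element of $\coim(f)\sqcup\coim(g)$ lies in exactly one orbit; so it suffices to define $M$ on each orbit and take the union.

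Next I would classify the orbits by the nature of their two ends. A path begins at an element of $\coim(f)\setminus\im(g)$ or of $\coim(g)\setminus\im(f)$ (no predecessor), and terminates at an element of $\im(f)\setminus\coim(g)$ or of $\im(g)\setminus\coim(f)$ (no successor); together with the cyclic case this yields the handful of orbit types one must consider. On each orbit I would set $M$ to follow $f$ throughout or to follow $g^{-1}$ throughout, the choice being dictated by which of $\coim(f)$, $\coim(g)$ the endpoints force us to exhaust: follow $f$ when the path ends in $\im(f)\setminus\coim(g)$, follow $g^{-1}$ when it ends in $\im(g)\setminus\coim(f)$, and on a cycle either option works. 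The subtle point is the orbit whose path terminates in $\im(f)\setminus\coim(g)$ after a final $f$-step: there one must split according to whether the penultimate $S$-element's image under $f$ lies in $\coim(g)$, so that $M$ still covers every element of $\coim(f)$ appearing in the orbit without ever using an edge outside $f\cup g^{-1}$.

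Finally I would verify the three conclusions. Property (3) holds by construction, every edge of $M$ being an $f$-edge or a reversed $g$-edge. For (1) and (2), a case check over the orbit types shows that on each orbit the chosen direction of $M$ is a bijection between its $S$-elements and its $T$-elements whose domain contains all $\coim(f)$-elements of the orbit and whose image contains all its $\coim(g)$-elements; assembling over orbits and using injectivity of $f$ and $g$ shows that $M$ is a matching with $\coim(f)\subseteq\coim(M)$ and $\coim(g)\subseteq\im(M)$. I expect the main obstacle to be purely the combinatorial bookkeeping in this orbit classification --- pinning down the terminal-element conventions so that no point of $\coim(f)$ or $\coim(g)$ is accidentally left unmatched --- rather than any conceptual difficulty.
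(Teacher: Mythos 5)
Your overall strategy matches the paper's: decompose $\coim(f)\sqcup\coim(g)$ into CSB-style orbits under alternating $f$, $g$, classify orbits by their endpoints, and define $M$ orbit by orbit as either $f$ or $g^{-1}$. However, the rule you give for choosing between $f$ and $g^{-1}$ on a path is reversed. You base the choice on where the path \emph{terminates}, but the correct criterion is where it \emph{begins} (the no-predecessor end). Concretely, take $S=\{s\}$, $T=\{t,y\}$, $f=\{(s,y)\}$, $g=\{(t,s)\}$: the unique orbit is $t\xrightarrow{g}s\xrightarrow{f}y$, which terminates at $y\in\im(f)\setminus\coim(g)$, so your rule says follow $f$ and yields $M=\{(s,y)\}$; then $t\in\coim(g)$ but $t\notin\im(M)$, so property (2) fails. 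The orbit \emph{begins} at $t\in\coim(g)\setminus\im(f)$, which forces using $g^{-1}$ and gives $M=\{(s,t)\}$, which works. Symmetrically, on a path terminating at $x\in\im(g)\setminus\coim(f)$ your rule prescribes $g^{-1}$, but such a path can begin at $s_1\in\coim(f)\setminus\im(g)$ (the paper's type 2), where $g^{-1}(s_1)$ is undefined and (1) fails. The correct rule, implicit in the paper's orbit types 1--4, is: follow $f$ on orbits beginning in $\coim(f)\setminus\im(g)$, follow $g^{-1}$ on orbits beginning in $\coim(g)\setminus\im(f)$, and take either on a cycle. The choice is forced at the first element of the orbit, which must be matched and has only one legal partner, and it then propagates along the orbit.

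One further point: the ``subtle split'' you describe (according to whether a late $S$-element's $f$-image lies in $\coim(g)$) does not change the definition of $M$ and so cannot repair the directional rule above. In the paper's proof this is the partition of type-1 orbits into $S_1^{\textup{co}\backslash\textup{p}}$ and $S_1^{\textup{p}}$, but $M$ restricted to $S_1^{\textup{co}}$ equals $f$ on both pieces; the split is pure bookkeeping that lets one read off $\coim(M)=\coim(f)\cupdot g(T_3^{\textup{p}})$ and $\im(M)=\coim(g)\cupdot f(S_1^{\textup{p}})$ directly from the diagram.
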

\end{document}